\newcommand{\ndN}{\mathbb{N}}
\newcommand{\ndZ}{\mathbb{Z}}
\newcommand{\ndR}{\mathbb{R}}
\renewcommand{\Pr}[1]{\mathbb{P}(#1)}
\newcommand{\Prb}[1]{\mathbb{P}\left(#1\right)}
\newcommand{\Ex}[1]{\mathbb{E}[#1]}
\newcommand{\Exb}[1]{\mathbb{E}\left[#1\right]}
\newcommand{\Va}[1]{\mathbb{V}[#1]}
\newcommand{\one}{{\mathbbm{1}}}
\newcommand{\convdis}{\,{\buildrel \mathrm{d} \over \longrightarrow}\,}
\newcommand{\convp}{\,{\buildrel \mathrm{p} \over \longrightarrow}\,}
\newcommand{\convas}{\,{\buildrel \mathrm{a.s.} \over \longrightarrow}\,}
\newcommand{\eqdist}{\,{\buildrel \mathrm{d} \over =}\,}
\newcommand{\atv}{\,{\buildrel \mathrm{d} \over \approx}\,}
\newcommand{\He}{\mathrm{H}}
\newcommand{\cB}{\mathcal{B}}
\newcommand{\cE}{\mathcal{E}}
\newcommand{\cN}{\mathcal{N}}
\newcommand{\cT}{\mathcal{T}}
\newcommand{\mA}{\mathsf{A}}
\newcommand{\mS}{\mathsf{S}}
\newcommand{\mT}{\mathsf{T}}
\begin{document}



\section{Introduction and main results}

\subsection{The largest degree}

Let $\mT$ be a Galton--Watson tree whose offspring distribution $\xi$ satisfies  $\Pr{\xi=0}>0$ and $\Pr{\xi \le 1}<1$. We assume that  $\Ex{\xi} < 1$ and that there is a slowly varying function $f$ and a constant $\alpha>1$ such that 
\begin{align}
\label{eq:xi}
\Pr{\xi=n} = f(n) n^{-1-\alpha}
\end{align} for all large enough integers $n$. For a fixed non-empty set $\Omega \subset \ndN_0$ of non-negative integers with $\Pr{\xi \in \Omega}>0$ we may consider the tree $\mT_n^\Omega$ obtained by conditioning $\mT$ on the event that the number of vertices with outdegree in $\Omega$ is equal to $n$. We assume that  either $\Omega$ or its complement $\Omega^c = \ndN_0 \setminus \Omega$ is finite. See Remark~\ref{re:extend1} for further comments on this restriction. Setting $\theta = \min(\alpha,2)$ we let $(X_t)_{t \ge 0}$ be the spectrally positive L\'evy process with Laplace exponent $\Ex{\exp(-\lambda X_t)} = \exp(t\lambda^\theta )$. Let $h$ be the density of $X_1$. Note that $h(z) = \frac{1}{\sqrt{4\pi}} \exp\left( -\frac{z^2}{4} \right)$ in the case $\Va{\xi}<\infty$. Our first main result is a local limit theorem for the maximal outdegree $\Delta(\mT_n^\Omega)$ of the tree~$\mT_n^\Omega$.

\begin{theorem}
	\label{te:main1}
	There is a slowly varying function $g_\Omega$ such that
	\[
	\Pr{\Delta(\mT_n^\Omega) = \ell} = \frac{1}{g_\Omega(n)n^{1/\theta}}\left(h\left(\frac{ \Pr{\xi \in \Omega}^{-1}(1-\Ex{\xi})n - \ell}{g_\Omega(n)n^{1/\theta}}   \right) + o(1)\right)
	\]
	uniformly for all $\ell \in \ndZ$.
\end{theorem}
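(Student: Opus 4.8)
The plan is to combine the condensation (``one big jump'') phenomenon for $\mT_n^\Omega$ with a stable local limit theorem for a sum of independent copies of $N_\Omega(\mT)$, the number of vertices of a tree with outdegree in $\Omega$. Throughout set $p_\Omega = \Pr{\xi\in\Omega}$ and $\mu = \Ex{\xi}$; note $\Ex{N_\Omega(\mT)} = p_\Omega\Ex{|\mT|} = p_\Omega/(1-\mu) =: m_\Omega$, a finite constant, and that $p_\Omega^{-1}(1-\mu)\in(0,1)$ since $p_\Omega\ge\Pr{\xi=0}\ge 1-\mu$, strictly because $\Pr{\xi\le 1}<1$. The starting point is a deterministic identity: if $\mT$ has a vertex $v^\ast$ of outdegree $\ell$, write $\mT^{(1)},\dots,\mT^{(\ell)}$ for the subtrees dangling from the children of $v^\ast$ and $\mT^-$ for the tree obtained by deleting all of them, so that $v^\ast$ becomes a leaf. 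Since $0\in\Omega$,
\[
N_\Omega(\mT) = N_\Omega(\mT^-) - 1 + \one_{\ell\in\Omega} + \sum_{i=1}^\ell N_\Omega(\mT^{(i)}).
\]
Because $\Omega$ or $\Omega^c$ is finite, $\one_{\ell\in\Omega}$ equals a constant $c_\Omega\in\{0,1\}$ for all large $\ell$, so that conditioning on $\{N_\Omega(\mT)=n\}$ amounts, given $\mT^-$ and $v^\ast$, to conditioning the (then i.i.d.) family $(\mT^{(i)})_{i\le\ell}$ on the value of $\sum_i N_\Omega(\mT^{(i)})$. Moreover $N_\Omega(\mathsf t)\ge 1$ for every nonempty tree, so the identity gives $N_\Omega(\mT)\ge\ell$ whenever $\mT$ has a vertex of outdegree $\ell$; in particular $\Pr{\Delta(\mT_n^\Omega)=\ell}=0$ for $\ell>n$, which already disposes of the extreme right tail.

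Next I would invoke the condensation picture for $\mT_n^\Omega$. Since $\xi$ is subcritical with a regularly varying tail of index $-1-\alpha$, the total progeny $|\mT|$ has a tail of order $n^{-\alpha}$ times a slowly varying function (by the cycle lemma together with a large deviation estimate for $\xi_1+\dots+\xi_n=n-1$), and $N_\Omega(\mT)$ inherits the same tail order, being sandwiched between the number of leaves and $|\mT|$. Consequently the event $\{N_\Omega(\mT)=n\}$ is typically realised through a single vertex $v^\ast$ of outdegree of order $n$; on this event $\Delta(\mT)$ is the outdegree of $v^\ast$, and the pointed tree $(\mT^-,v^\ast)$ is tight and converges in distribution to a fixed, a.s.\ finite random pointed tree. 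This is the analogue, for conditioning on $N_\Omega$, of the results of Jonsson--Stef\'ansson, Janson and Kortchemski on conditioning on $|\mT|$; the cleanest route to a version that is quantitative and uniform in $\ell$ is to pass to the {\L}ukasiewicz-type walk encoding $\mT_n^\Omega$ and apply the local one big jump principle for heavy-tailed walks. Feeding this into the multiplicative formula $\Pr{\mT=\mathsf t}=\prod_v\Pr{\xi=\mathrm{outdeg}(v)}$ and reorganising the sum over the position of $v^\ast$ yields, uniformly for $\ell$ in the relevant window, for a constant $b>0$ (the total mass of the natural measure on reduced pointed trees),
\[
\Pr{\Delta(\mT_n^\Omega)=\ell} = \frac{b\,\Pr{\xi=\ell}}{\Pr{N_\Omega(\mT)=n}}\,\Exb{\,\Prb{\,\Sigma_\ell = n - N_\Omega(\mT^-) + c_\Omega\,}\,}\,(1+o(1)),
\]
where $\Sigma_\ell=\sum_{i=1}^\ell N_\Omega(\mT^{(i)})$ is a sum of $\ell$ i.i.d.\ copies of $N_\Omega(\mT)$ and the outer expectation is over the limit pointed tree $\mT^-$; the event that $v^\ast$ is genuinely the unique vertex of maximal outdegree, i.e.\ that $\mT^-$ and all $\mT^{(i)}$ have outdegrees $<\ell$, is absorbed into the $1+o(1)$ by a union bound, valid since $\alpha>1$.

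Now I would apply the Gnedenko--Kolmogorov local limit theorem. As $N_\Omega(\mT)$ has mean $m_\Omega$ and lies in the domain of attraction of the spectrally positive $\theta$-stable law, there is a slowly varying $\ell_0$ with $\Prb{\Sigma_\ell = k} = \rho_0(\ell)^{-1}\big(h^\ast((k-\ell m_\Omega)/\rho_0(\ell))+o(1)\big)$ uniformly in $k$, where $\rho_0(\ell)=\ell^{1/\theta}\ell_0(\ell)$ and $h^\ast$ is the density of the limit law, which is a deterministic rescaling of $h$ (for $\theta=2$ both are centred Gaussians; for $\theta<2$ both are strictly stable, spectrally positive and centred, the latter since $\Exb{e^{-\lambda X_1}}=e^{\lambda^\theta}$ forces $\Ex{X_1}=0$ when $\theta>1$); if $N_\Omega(\mT)$ lived in a proper sublattice one would restrict $k$ to a residue class, but the regularity hypotheses make the span $1$. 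The law of large numbers $\Sigma_\ell\approx\ell m_\Omega$, applied with $\Sigma_\ell = n - N_\Omega(\mT^-)+c_\Omega$, shows that the relevant window is $\ell = p_\Omega^{-1}(1-\mu)n + O(n^{1/\theta}\ell_0(n))$, which lies in $\{1,\dots,n\}$; on it $\Pr{\xi=\ell}=\Pr{\xi=\lfloor n/m_\Omega\rfloor}(1+o(1))$ and $\rho_0(\ell)=\rho_0(\lfloor n/m_\Omega\rfloor)(1+o(1))$ by slow variation, and $N_\Omega(\mT^-)+c_\Omega=o(\rho_0(\ell))$ in probability, so it washes out inside the continuous bounded $h^\ast$ by dominated convergence. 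Collecting $b$, $\Pr{N_\Omega(\mT)=n}$, the scale relating $h^\ast$ to $h$, and the slowly varying factors into a single slowly varying function $g$ with $g(n)n^{1/\theta}$ of order $n^{1/\theta}$ turns the display into $\Pr{\Delta(\mT_n^\Omega)=\ell}=C_n\,h\big((p_\Omega^{-1}(1-\mu)n-\ell)/(g(n)n^{1/\theta})\big)(1+o(1))$ for a constant $C_n$ independent of $\ell$. Summing over $\ell$ identifies $C_n$: the left side sums to $1$, while $\sum_{\ell\in\ndZ}h\big((p_\Omega^{-1}(1-\mu)n-\ell)/(g(n)n^{1/\theta})\big)$ is a Riemann sum for $\int_\ndR h=1$ with mesh tending to $0$, so $C_n=(g(n)n^{1/\theta})^{-1}(1+o(1))$; this also pins down $\Pr{N_\Omega(\mT)=n}$ up to the constants.

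Finally one must upgrade the $o(1)$ to be uniform over all $\ell\in\ndZ$, not just over the bulk. For this, use the decaying form of the local limit theorem, $\Prb{\Sigma_\ell=k}\le C\rho_0(\ell)^{-1}(1+|(k-\ell m_\Omega)/\rho_0(\ell)|)^{-1-\theta}$, which propagates through the formula of the second step to give $g(n)n^{1/\theta}\Pr{\Delta(\mT_n^\Omega)=\ell}\le C'(1+|x_\ell|)^{-1-\theta}$ with $x_\ell=(p_\Omega^{-1}(1-\mu)n-\ell)/(g(n)n^{1/\theta})$; since $h(x_\ell)\le\sup_{|x|\ge M}h(x)$ whenever $|x_\ell|\ge M$, both sides are uniformly small outside a bulk window, while inside the window the difference tends to $0$ uniformly by the uniform local limit theorem (and the left side vanishes for $\ell\le 0$ and $\ell>n$). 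The main obstacle is the second step: making the condensation heuristic quantitative, uniformly in $\ell$, with an error that is genuinely $o(\Pr{N_\Omega(\mT)=n})$ after the non-condensation configurations have been removed. Once transported to the {\L}ukasiewicz-type encoding this is an application of the local one big jump principle for heavy-tailed random walks, but obtaining the uniformity and the error control is the technical heart of the argument; the remaining steps are slow-variation bookkeeping and a textbook stable local limit theorem.
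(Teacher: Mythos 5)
Your proposal takes a genuinely different route from the paper. The paper first reduces the general-$\Omega$ problem to the case $\Omega=\ndN_0$ via the Ehrenborg--M\'endez transformation of Section~\ref{sec:cond}: $\mT_n^\Omega$ is obtained by blowing up a simply generated tree $\tilde{\mT}_n$ with modified offspring law $\tilde{\xi}$, so that all estimates for $\Delta(\tilde{\mT}_n)$ follow from the balls-in-boxes/\L{}ukasiewicz analysis of Lemma~\ref{le:maxllt}, and the blow-up is controlled by Lemma~\ref{le:capsule}. You instead decompose $\mT$ directly at the candidate maximal vertex $v^\ast$ and apply a stable local limit theorem to $\Sigma_\ell = \sum_{i=1}^\ell N_\Omega(\mT^{(i)})$, where the step distribution is $N_\Omega(\mT)$ itself. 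This is a legitimate alternative architecture, and it would buy a more ``intrinsic'' proof that avoids the enriched-tree machinery; but it also means the burden of establishing the stable local limit theorem and the big-jump tail falls on the random variable $N_\Omega(\mT)$ rather than on $\tilde{\xi}$, which is where your argument has genuine gaps.

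The first gap is the claim that ``$N_\Omega(\mT)$ inherits the same tail order, being sandwiched between the number of leaves and $|\mT|$.'' Sandwiching alone proves nothing about tail order: a random variable between two regularly varying quantities of index $-\alpha$ need not itself be regularly varying, and it certainly need not satisfy a local limit theorem (the sandwich says nothing about aperiodicity, and ``number of leaves'' does not visibly have a regularly varying density either). What you actually need, and what feeds every subsequent display (the denominator $\Pr{N_\Omega(\mT)=n}$, the scaling sequence $\rho_0(\ell)$, and the Gnedenko local limit theorem for $\Sigma_\ell$), is a proof that $N_\Omega(\mT)$ has a regularly varying density of index $-\alpha$ and spans $\ndZ$. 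The paper obtains exactly this through $N_\Omega(\mT) = L_\Omega(\mT) = |\tilde{\mT}|$ combined with the regular variation of $\tilde{\xi}$ from Equation~\eqref{eq:f0} and the big-jump estimate~\eqref{eq:gwt}; you cannot simply assert it.

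The second gap you acknowledge yourself: making the condensation picture quantitative, uniformly in $\ell\in\ndZ$, ``is the technical heart of the argument.'' In the paper this is the bulk of the proof of Lemma~\ref{le:maxllt}, which is a careful four-range analysis: exponential bounds to dispose of $\ell\le\epsilon_1 n/\log n$; a discard of the $k\ge 2$ big-jump terms over the intermediate range; a big-jump estimate to kill $\ell\in[\epsilon_1 n/\log n, \epsilon n]$ using $1/\theta<\alpha$; and only then the LLT over the bulk window. In the $\Omega\ne\ndN_0$ case there is then a further two-page analysis of the error terms $R_{n,\ell}(1),R_{n,\ell}(2)$ against the bound~\eqref{eq:bo} from Lemma~\ref{le:capsule}. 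Your union bound heuristic and the decay $\le C(1+|x_\ell|)^{-1-\theta}$ sketch the right shape but do not substitute for this work; in particular your display for $\Pr{\Delta(\mT_n^\Omega)=\ell}$ with a multiplicative $(1+o(1))$ uniformly in $\ell$ is stronger than what the range-by-range argument can hope to deliver (the theorem itself only asserts an additive $o(1)$ in the density scale), and the passage from your display to the theorem would need to be rephrased additively.

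So: a plausible and genuinely different outline, but with two substantive gaps — the tail and LLT for $N_\Omega(\mT)$ are asserted via an invalid sandwich argument rather than established, and the uniform quantification of the big-jump picture is left as a statement of intent.
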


The slowly varying function $g_\Omega$ appearing in Theorem~\ref{te:main1} admits an explicit description. 
Setting $K(x) = \Ex{\xi^2 \one_{\xi \le x}}$, define the function $g$ for sufficiently large integers $n \ge 1$ to equal
\begin{align}
\label{eq:defofg}
g(n) = \begin{cases}
\sqrt{\frac{\Va{\xi}}{2}}, \quad &\Va{\xi}<\infty \\
\sqrt{K\left(\sup\left\{x \ge 0 \mid \frac{K(x)}{x^2} \ge \frac{1}{n} \right\} \right)}, \quad &\Va{\xi}=\infty \,\,\mathrm{and}\,\, \theta= 2 \\
n^{-1/\theta}(\Gamma(1- \theta))^{1/\theta} \inf\left\{ x \ge 0 \mid \Pr{\xi > x} \le \frac{1}{n} \right\}, &\Va{\xi}=\infty \,\,\mathrm{and}\,\, 1<\theta<2. 
\end{cases}
\end{align}
The function $g_\Omega$ may be chosen to satisfy
\begin{align}
\label{eq:gslowly}
g_\Omega(n) = \begin{cases}
\frac{1}{\sqrt{2}}\left(\frac{\Ex{\xi^2}-1}{\Pr{\xi \in \Omega}} + \frac{(1- \Ex{\xi})(1 - \Ex{\xi} + 2 \Ex{\xi, \xi \in \Omega})}{\Pr{\xi \in \Omega}^2}\right)^{1/2}, \quad &\Va{\xi}<\infty \\
\frac{g(n)}{\Pr{\xi \in \Omega}^{1/\theta}}, \quad &\Va{\xi} = \infty.
\end{cases}
\end{align}

\begin{figure}[t]
	\centering
	\includegraphics[width=0.8\textwidth]{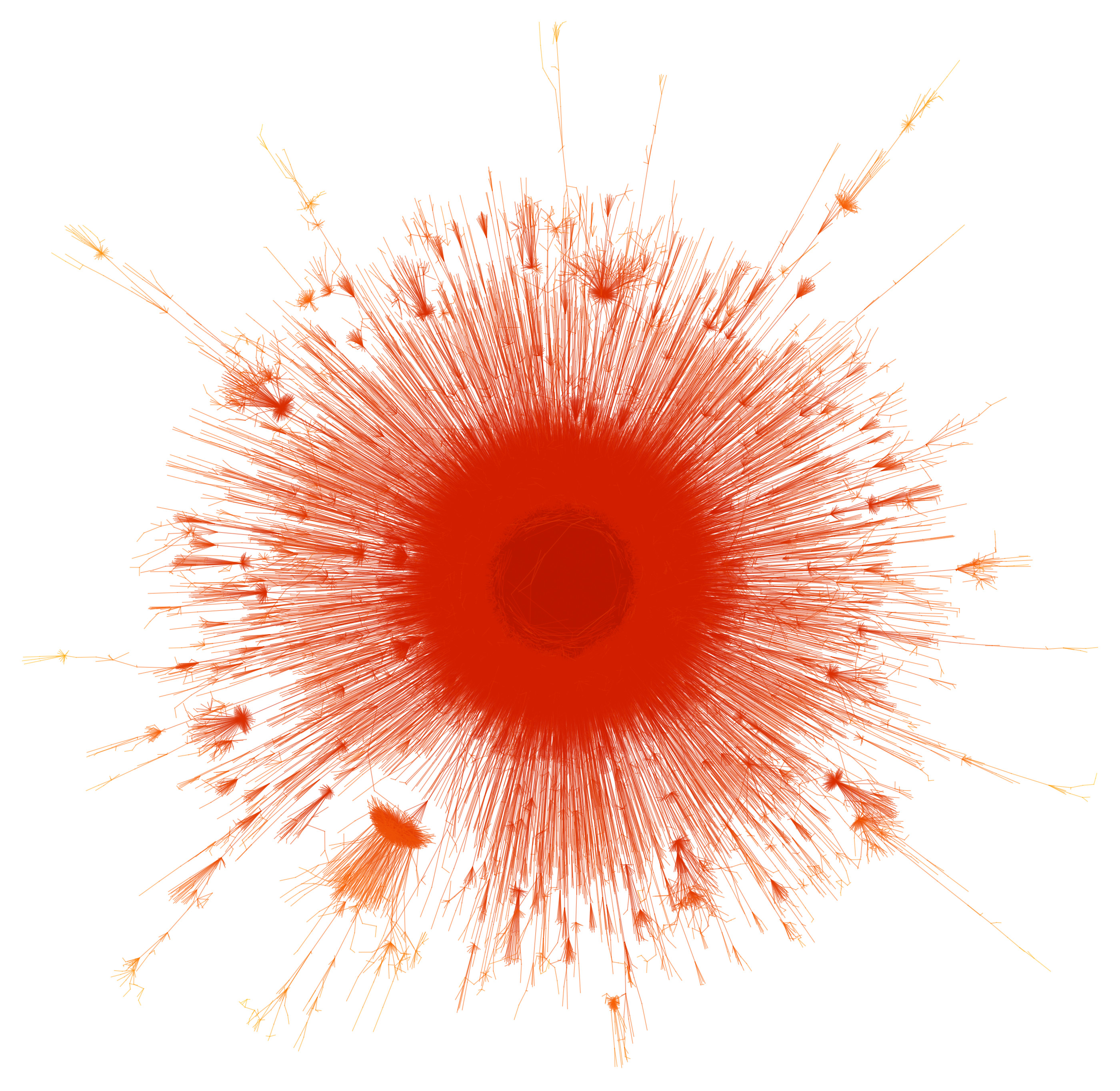}
	\caption{
		Simulation of a random simply generated tree with 100k vertices in the complete condensation phase.\protect\footnotemark The precise behaviour of the unique vertex with macroscopic degree in the center of the image is described by Theorems~\ref{te:main1} and~\ref{te:main2}. The offspring distribution was chosen to be of the form~\eqref{eq:xi} with $\alpha = 3/2$, $f(n)$ constant (except for the first term $f(0)$), and $\Ex{\xi} = 1/2$.
	}
	\label{fi:condensation}
\end{figure}

The behaviour of the maximal degree in this setting contrasts the case of a critical Galton--Watson tree, where the largest outdegree has order $o_p(n)$ ~\cite{MR2840318,MR3188595,MR2908619}, although condensation may still occur on a smaller scale, as shown in~\cite{2018arXiv180410183K}, if the offspring distribution lies in the domain of  attraction of a Cauchy law. In the subcritical regime the special case $\mT_n := \mT_n^{\ndN_0}$ was studied by Janson~\cite[Thm. 19.34]{MR2908619}, who established a central limit theorem for~$\Delta(\mT_n)$ if $\xi$ follows asymptotically a power law. This was extended to offspring laws with a regularly varying density by Kortchemski~\cite[Thm. 1]{MR3335012} using a different approach, which also inspired the present work. Hence Theorem~\ref{te:main1} generalizes these results to different kinds of conditionings and sharpens the form of convergence to a local limit theorem.  We note that the maximal displacement of subcritical branching random walks in the continuous regime also received attention in previous literature, see~\cite{zbMATH06708812,zbMATH06892193}.

\footnotetext{
	Visualization was done with Mathematica using a spring-electrical embedding algorithm. The simulation of the random tree was carried out with the author's open source software GRANT - Generate RANdom Trees - available at: \url{https://github.com/BenediktStufler/grant}. The code implements a multithreaded version of Devroy's~algorithm \cite{MR2888318} for simulating size-constrained Galton--Watson trees.
}

The case $\theta = 3/2$ is related to extremal component sizes in  uniform random planar structures. It was observed by Banderier, Flajolet, Schaeffer, and Soria~\cite[Thm. 7]{MR1871555} that the Airy distribution precisely quantifies the sizes of cores in various models of random planar maps. This phenomenon was also observed in  random graphs from planar-like classes by Gim\'enez, Noy, and Ru\'e \cite[Thm. 5.4]{MR3068033}. The local limit theorems established in these sources were obtained  using analytic methods. For uniform size-constrained  planar maps and related  models  Addario-Berry~\cite{AddBe} observed that the number of corners in the $2$-connected core is distributed like the largest outdegree in a simply generated tree. In a similar spirit S.~\cite[Cor. 6.42, Thm. 6.20]{2016arXiv161202580S} related the largest $2$-connected block in random graphs from planar-like classes and general tree-like structures to a Gibbs partition of the maximum outdegree in large Galton--Watson trees. These connections have been used in \cite{AddBe,2016arXiv161202580S} to recover the central limit theorem of the largest block in these models via a probabilistic approach. Theorem~\ref{te:main1} enables us to strengthen these alternative proofs to recover the stronger local limit theorem.

\subsection{The global shape}
We say a plane tree is marked if one of its vertices is distinguished. The path connecting the root to the marked vertex is called the spine.  The fringe subtree of a plane tree at a vertex  is the subtree consisting of the vertex and all its descendants.  Any plane tree $T$ may be fully described by 
the ordered list $(F_i(T))_{1 \le i \le \Delta(T)}$ of fringe subtrees dangling from the lexicographically first vertex $v$ with maximum outdegree, and the marked tree $F_0(T)$ obtained from $T$ by marking the vertex $v$ and  cutting away all its descendants.

We consider the size-biased random variable $\hat{\xi}$ with values in $\ndN \cup \{\infty\}$ and distribution given by
\begin{align}
\Pr{\hat{\xi} = k} = \begin{cases}k \Pr{\xi=k}, &k < \infty \\ 1 - \Ex{\xi}, &k = \infty.\end{cases}
\end{align}
Let $\mT^\bullet$ be the random marked plane tree constructed as follows. We start with a root vertex and sample an independent copy $\hat{\xi}_1$ of $\hat{\xi}$. If it is equal to infinity, then we mark the root vertex and stop the construction. Otherwise we add offspring according to $\hat{\xi}$ to the root vertex. We select one of the offspring vertices uniformly at random and declare it special. Each of the non-special offspring vertices gets identified with the root of an independent copy of the $\xi$-Galton--Watson tree $\mT$. We then iterate the construction with the special offspring vertex. In particular, the marked vertex of $\mT^\bullet$ is always a leaf. For any finite plane tree $T$ with a marked leaf $v$ it holds that
\begin{align}
\label{eq:tbullet}
\Pr{\mT^\bullet = (T,v)} = (1 - \Ex{\xi})\prod_{u \in T,u \ne v} \Pr{\xi = d^+_T(u)}.
\end{align}

Let $(\mT^i)_{i \ge 1}$ denote independent copies of the $\xi$-Galton--Watson tree $\mT$.
The following observation describes the asymptotic shape of the conditioned Galton--Watson tree $\mT_n^\Omega$. It tells us that $F_0(\mT_n)$ converges weakly to $\mT^\bullet$, and  that all but a very small number of the fringe subtrees dangling from the vertex with maximum outdegree in $\mT_n$ behave asymptotically like a list of independent copies of the unconditioned Galton--Watson tree $\mT$ that is stopped after accumulating sufficient mass:
\begin{theorem}
	\label{te:main2}
	There is a constant $C>0$ such that for any sequence of integers $(t_n)_{n \ge 1}$  with $t_n \to \infty$ and $t_n = o(n)$ it holds that
	\[
	\left(F_0(\mT_n^\Omega), (F_i(\mT_n^\Omega))_{1 \le i \le \Delta(\mT_n^\Omega) - t_n}, \one_{\sum_{i = \Delta(\mT_n^\Omega) - t_n }^{\Delta(\mT_n^\Omega)} |F_i(\mT_n^\Omega)| \ge C t_n } \right) \atv \left(\mT^\bullet, (\mT^i)_{1 \le i \le \Delta_{\langle n \rangle} - t_n},0\right)
	\]
	for 
	\[
	\Delta_{\langle n \rangle} :=  \sup \left\{ d \ge 1 \,\,\bigg\rvert\,\, L_\Omega(\mT^\bullet) +  \sum_{i=1}^d L_\Omega(\mT^i) \le n  \right\}.
	\]
\end{theorem}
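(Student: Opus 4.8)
The plan is to decompose the probability of a configuration $(T_0, T_1, \dots, T_{m-t_n}, \epsilon)$ (with $m$ playing the role of $\Delta(\mathsf{T}_n^\Omega)$, which itself is random) using the skeleton decomposition described before the statement, and then to compare term by term with the product measure on the right-hand side. Given the identity $\Pr{\mathsf{T}^\bullet = (T,v)} = (1-\Ex{\xi})\prod_{u \ne v}\Pr{\xi = d^+_T(u)}$ from~\eqref{eq:tbullet} and the analogous product formula for $\xi$-Galton--Watson trees, the joint probability that $F_0(\mathsf{T}_n^\Omega) = T_0$, that $F_i(\mathsf{T}_n^\Omega) = T_i$ for $1 \le i \le m - t_n$, and that the remaining $t_n$ fringe trees together with the degree-$m$ vertex form a prescribed ``block'' of total size $s$, factors as a Galton--Watson weight times a combinatorial count of how the block can be completed, times the renewal-type normalizing constant $\Pr{\text{number of }\Omega\text{-vertices} = n}$. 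The key is that the first few coordinates only ``see'' the local structure, so after dividing by the normalization the dependence on $n$ enters solely through a ratio of partition functions.

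The main technical input is a sharp asymptotic for $\Pr{\#\{\text{vertices of }\mathsf{T}\text{ with outdegree in }\Omega\} = n}$ and, more importantly, for the same quantity when one additionally conditions on the degree of the extremal vertex being $\ell$ and on the first few fringe trees being fixed. This is where Theorem~\ref{te:main1} and its proof ingredients come in: the extremal degree $\ell$ concentrates around $\Pr{\xi\in\Omega}^{-1}(1-\Ex{\xi})n$ up to fluctuations of order $g(n)n^{1/\theta} = o(n)$, so conditioning on any fixed local data perturbs $n$ by a bounded amount and perturbs $\ell$ by a bounded amount, and the local limit theorem shows these perturbations cost a factor $1 + o(1)$. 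Concretely, I would write, for fixed $T_0$ with marked leaf and fixed $T_1,\dots,T_k$,
\[
\Pr{F_0 = T_0,\ F_i = T_i\ \forall i \le k} \;=\; (1+o(1))\,\Pr{\mathsf{T}^\bullet = T_0}\prod_{i=1}^k \Pr{\mathsf{T} = T_i},
\]
with the $o(1)$ uniform, by expressing both sides via the partition function and invoking Theorem~\ref{te:main1} to control the ratio; then one upgrades from fixed $k$ to $k = m - t_n$ with $t_n \to \infty$ by a standard truncation argument, using that the total mass of the right-hand product over trees of size exceeding any threshold is summable.

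For the last coordinate, the indicator $\one_{\sum_{i=m-t_n}^m |F_i| \ge C t_n}$, the claim is that it converges to $0$ in probability, i.e. with probability tending to $1$ the last $t_n$ fringe subtrees (plus the extremal vertex) have total size \emph{less} than $C t_n$ for a suitable constant $C$. This should follow because, conditionally on the local data, these $t_n$ fringe trees behave asymptotically like $t_n$ i.i.d.\ copies of $\mathsf{T}$, and $\Ex{|\mathsf{T}|} = (1-\Ex{\xi})^{-1} < \infty$ since $\xi$ is subcritical; taking $C$ strictly larger than this mean, the event $\sum |F_i| \ge C t_n$ is a large-deviation event for a sum of $t_n$ i.i.d.\ finite-mean variables and has probability $o(1)$ (indeed $\Pr{|\mathsf{T}| > x}$ is itself regularly varying with index $-\alpha$, $\alpha>1$, so a weak law of large numbers applies). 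The point of bundling the last $t_n$ trees rather than describing them individually is precisely that we cannot control each of them — the extremal vertex's degree fluctuates by $o(n)$ but this is still much larger than $t_n$, so the ``missing'' trees are not asymptotically negligible in aggregate; only their total size being $o(n)$, hence $O(t_n)$ after the right normalization, is what survives.

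The step I expect to be the main obstacle is making the ratio-of-partition-functions estimate uniform, i.e.\ proving that conditioning on fixed local data $(T_0, T_1, \dots, T_k)$ changes $\Pr{\#\{\Omega\text{-vertices}\}=n}$ by a factor $1+o(1)$ with the error uniform over the relevant range — this requires the local limit theorem of Theorem~\ref{te:main1} not just pointwise but with enough uniformity to absorb shifts in both $n$ and $\ell$ by bounded amounts, and to sum the resulting estimates over $\ell$ in the bulk. Handling the $k = m - t_n$ regime (where $k$ grows with $n$) via truncation, and verifying that the contribution of atypically large fringe subtrees among the first $m - t_n$ is negligible, will also need care, but the regular variation of $\Pr{|\mathsf{T}|>x}$ and the subcriticality make this tractable.
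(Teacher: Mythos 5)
Your approach departs genuinely from the paper's, and it contains gaps that would have to be filled before it became a proof — the most serious of which you flag yourself as ``the main obstacle.''

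The paper does not prove Theorem~\ref{te:main2} by a partition-function-ratio argument built on Theorem~\ref{te:main1}. It first proves the case $\Omega = \ndN_0$ (Lemma~\ref{le:limit}) through the balls-in-boxes / cyclic-lemma representation: the conditioned tree is approximated in total variation by a cyclic shift of an unconditional $\xi$-walk via the big-jump domain estimate~\eqref{eq:pre2}, and the uniformity of the asymptotic over all local configurations of total size $\le n - t_n$ drops out of an explicit computation — the residual factor is the probability that a $\xi$-walk of length $S_n \ge t_n$ encodes a forest, which tends to $1 - \Ex{\xi}$ uniformly. General $\Omega$ is then reduced to $\Omega = \ndN_0$ by the Ehrenborg--M\'endez blow-up of a $\tilde{\xi}$-tree, with Lemma~\ref{le:capsule} controlling the random decoration vector at the maximal-degree vertex. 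Theorem~\ref{te:main1} is never invoked; in the paper it is proved afterwards from the same machinery.

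The concrete gaps. (i) You assert the $o(1)$ in your ratio estimate is uniform ``by invoking Theorem~\ref{te:main1},'' yet also identify establishing that uniformity as the main unsolved obstacle; both cannot stand. Theorem~\ref{te:main1} is a pointwise-in-$\ell$ statement about $\Delta(\mT_n^\Omega)$ and does not supply joint uniform control over the normalization $\Pr{L_\Omega(\mT) = n - a}$ for $a$ of order up to $t_n$, the sum over the unknown extremal degree $\ell$, and the sum over the $\ell - k$ unfixed fringe forests. The ``standard truncation argument'' for passing from fixed $k$ to $k = \Delta - t_n$ is not a substitute: total variation convergence on a product space of growing dimension needs exactly the uniform asymptotic the paper computes, not a fixed-$k$ limit plus tightness. (ii) You claim the last $t_n$ fringe trees, conditionally on the fixed local data, are approximately i.i.d.\ copies of $\mT$. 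They are not: conditioning on $(F_1, \ldots, F_{\Delta - t_n})$ constrains the remainder through the fixed $\Omega$-count. The paper instead uses the exchangeability of $(F_1, \ldots, F_\Delta)$ to transport the estimate already proved for $(F_1, \ldots, F_{t_n+1})$ to $(F_{\Delta - t_n}, \ldots, F_\Delta)$, and for general $\Omega$ adds a deterministic factor-two bound on how much the blow-up can inflate subtree sizes. (iii) Your skeleton decomposition treats the degree-$m$ vertex as if it factorizes $\mT_n^\Omega$ in the same clean way as in the $\Omega = \ndN_0$ case, but for general $\Omega$ the extremal degree is realized as the giant coordinate of a random decoration vector whose own local limit must be established separately; that is precisely what Lemma~\ref{le:capsule} does, and your write-up has no counterpart to it.
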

Here $\atv$ denotes that the total variation distance of the two random vectors tends to zero as $n$ tends to infinity. We also let $|\cdot|$ denote the number of vertices, and $L_\Omega(\cdot)$ the number of vertices with outdegree in $\Omega$. 

Results similar to Theorem~\ref{te:main2} are known to hold for the tree $\mT_n = \mT_n^{\ndN_0}$.
Janson \cite[Thm. 20.2]{MR2908619} established convergence of $(F_0(\mT_n), F_i(\mT_n))_{1 \le i \le k})$ for $k$ a constant, assuming significantly weaker requirements on $\xi$. Specifically, assuming only that $\xi$ is heavy tailed and $\Ex{\xi}<1$ he showed that such a limit holds with respect to the lexicographically first vertex having ``large'' outdegree instead of maximum outdegree. The condition ensuring that ``large'' means maximum with high probability is $\Delta(\mT_n) = (1 - \Ex{\xi} + o_p(1))n$, which seems to be  more general than assumption~\eqref{eq:xi}.  Kortchemski ~\cite[Cor. 2.7]{MR3335012} proved a limit for the fringe subtrees $(F_i(\mT_n))_{1 \le i \le (1 - \Ex{\xi} - \epsilon)n}$ in the setting \eqref{eq:xi} for $\epsilon>0$ an arbitrarily small constant.  

Abraham and Delmas~\cite{MR3164755} established a local weak limit for the vicinity of the root vertex of $\mT_n^\Omega$ in the more general condensation regime. This implies that a vertex with large outdegree emerges close to the root. In general this vertex does not have to correspond with high probability to a vertex with maximum outdegree of $\mT_n^\Omega$. But it clearly does in the setting~\eqref{eq:xi}, yielding  weak convergence of $(F_0(\mT_n^\Omega), F_i(\mT_n^\Omega))_{1 \le i \le k})$ for any fixed constant $k$. (At least in  the case $\Omega=\ndN_0$, this is known to hold under the weaker assumption $\Delta(\mT_n^\Omega) = n(1 - \Ex{\xi})\Pr{\xi \in \Omega}^{-1} + o_p(n)$, see~\cite[Sec. 20]{MR2908619}. In the setting~\eqref{eq:xi} this is ensured by Theorem~\ref{te:main1}.) For conditioned Galton--Watson trees that encode certain types of Boltzmann planar maps  a result concerning the asymptotic behaviour of the forest of fringe subtrees dangling from a vertex with macroscopic degree was also established by Janson and Stef\'ansson~\cite{MR3342658}.

The strategy for proving the main theorems is to treat the case $\Omega=\ndN_0$ separately in Section~\ref{sec:cond1}, and then transfer the results to the general case in Sections~\ref{sec:cond} and~\ref{sec:0not}. The transfer is performed using a blow-up procedure due to Ehrenborg and M\'endez~\cite{MR1284403}, Minami~\cite{MR2135161}, and Rizzolo~\cite{MR3335013}, where every vertex is expanded at random into a vertebrate. The details are rather technical and use Gibbs partition  results from S.~\cite{doi:10.1002/rsa.20771} to describe the asymptotic behaviour of the vertebrates corresponding to vertices with large degree. The approach for the case $\Omega=\ndN_0$ is inspired by the work of Kortchemski~\cite{MR3335012}. For this case, the local limit result in Theorem~\ref{te:main1} is proved using a connection between random trees and random walks, combined with estimates of Denisov, Dieker and Shneer~\cite{MR2440928} on the big-jump domain for random walks. The asymptotic description of the fringe subtrees in  Theorem~\ref{te:main2} is proved by combining  approximation results by Armend{\'a}riz and Loulakis~\cite{MR2775110} with a path decomposition. 

Let us note a few implications of Theorem~\ref{te:main2}. First, if a sequence of positive integers $(r_n)_{n \ge 1}$ satisfies $r_n / (g(n) n^{1/\theta}) \to \infty$ and $r_n  = o(n)$, then $ \Pr{\Delta(\mT_n^\Omega) > \Pr{\xi \in\Omega}^{-1}(1-\Ex{\xi})n - r_n} \to 1$ by the central limit theorem for $\Delta(\mT_n^\Omega)$ implied by Theorem~\ref{te:main1}. Hence it follows by Theorem~\ref{te:main2}:

\begin{corollary}
	\label{co:m1}
	If a sequence of positive integers $(r_n)_{n \ge 1}$ satisfies $r_n / (g(n) n^{1/\theta}) \to \infty$ and $r_n  = o(n)$, then
	\begin{align}
	\label{eq:tsecond}
	\left(F_0(\mT_n^\Omega), (F_i(\mT_n^\Omega))_{1 \le i \le \Pr{\xi \in \Omega}^{-1}(1 - \Ex{\xi})n - r_n}  \right) \atv \left(\mT^\bullet, (\mT^i)_{1 \le i \le (1 - \Ex{\xi})\Pr{\xi \in \Omega}^{-1} n - r_n}\right).
	\end{align}
\end{corollary}

We may view $(\Delta_{\langle n \rangle})_{n \ge 1}$ as a renewal process with inter-arrival times distributed like $L_\Omega(\mT)$. The number $L_\Omega(\mT)$ of vertices with outdegree in $\Omega$ has a probability density that varies regularly with index $-1-\alpha$. Specifically,
\begin{align}
\label{eq:gendensity}
\Pr{L_\Omega(\mT) = n} \sim \frac{f(n) \Pr{\xi \in \Omega}^\alpha}{(1 - \Ex{\xi})^{1+\alpha}} n^{-1-\alpha}.
\end{align}
See Equations~\eqref{eq:lomegat1} and~\eqref{eq:lomegat2} below. 

In Appendix~\ref{sec:appendix} we collect a few results on general renewal processes with inter-arrival times having regularly varying probability densities. For example,  Lemma~\ref{le:doit2} implies that the first $o(n)$ fringe subtrees dangling from the vertex with maximal degree in $\mT_n^\Omega$ asymptotically become independent from each other~\emph{and} the maximal degree:

\begin{corollary}
	\label{co:m2}
	Let $\Delta_{[n]}$ denote an identically distributed copy of $\Delta(\mT_n^\Omega)$ that is independent from $\mT^\bullet$ and $(\mT_i)_{i \ge 1}$.  If a sequence of positive integers $(m_n)_{n \ge 1}$ satisfies $m_n = o(n)$, then 
	\begin{align}
	\label{eq:last}
	\left(F_0(\mT_n^\Omega), (F_i(\mT_n^\Omega))_{1 \le i \le m_n}, \Delta(\mT_n^\Omega)  \right) \atv \left(\mT^\bullet, (\mT^i)_{1 \le i \le m_n}, \Delta_{[n]}\right).
	\end{align}
\end{corollary}

Note the contrast between Corollaries~\ref{co:m1} and~\ref{co:m2}. Almost all fringe subtrees dangling from the vertex with maximal degree in $\mT_n^\Omega$ become asymptotically independent from each other, but only a small $o(n)$ number additionally becomes independent from the maximal degree itself. On an intermediary scale, it appears  that the best we can achieve is a a weaker \emph{contiguousness} relation that follows from~Lemma~\ref{le:doit}:

\begin{corollary}
	\label{cor:m3}
	Let $0<\delta < \Pr{\xi\in\Omega}^{-1}(1 - \Ex{\xi})$ be given.  For any $\epsilon>0$ there are constants $0<c<C$ and a number $N_0>0$ such that for all $n \ge N_0$ and all events $\cE$
	\begin{align}
	\label{eq:contig}
	c \Prb{\left(\mT^\bullet, (\mT^i)_{1 \le i \le \Delta_{[ n ]}- \delta n}\right) \in \cE} - \epsilon &\le \Prb{\left(F_0(\mT_n^\Omega), (F_i(\mT_n^\Omega))_{1 \le i \le \Delta(\mT_n^\Omega) - \delta n}  \right) \in \cE} \\&\le 	C \Prb{\left(\mT^\bullet, (\mT^i)_{1 \le i \le \Delta_{[ n ]}- \delta n}\right) \in \cE} + \epsilon. \nonumber
	\end{align}
	That is, $\cE$ may be any collection of finite sequences of finite plane trees, with the first tree additionally carrying a marked leaf.
\end{corollary}
Note that the family $\left(F_0(\mT_n^\Omega), (F_i(\mT_n^\Omega))_{1 \le i \le \Delta(\mT_n^\Omega) - \delta n}  \right)$ determines $\Delta(\mT_n^\Omega)$. Thus any property that holds with high probability for $\left(F_0(\mT_n^\Omega), (F_i(\mT_n^\Omega))_{1 \le i \le \Delta(\mT_n^\Omega) - \delta n}, \Delta(\mT_n^\Omega)  \right)$ also holds with high probability for $\left(\mT^\bullet, (\mT^i)_{1 \le i \le \Delta_{[ n ]}- \delta n}, \Delta_{[n]}\right)$, and vice versa.

\subsection{Limits of graph parameters}

We postpone the complex proofs of Theorems~\ref{te:main1} and \ref{te:main2} to Sections~\ref{sec:cond1},~\ref{sec:cond} and~\ref{sec:0not}. In the present section we collect and prove applications concerning limits of the height,  fringe subtree statistics, and the non-maximal vertex degrees.

\subsubsection{The height}
We let  $\He(\cdot)$ denote the height.  The unconditioned $\xi$-Galton--Watson tree  $\mT$ is known \cite[Thm. 2]{MR0217889} to satisfy
\begin{align}
\label{eq:h0}
\Pr{\He(\mT)=n} &\sim c_{\mathrm{H}} \Ex{\xi}^n
\end{align}
for some constant $c_{\mathrm{H}} > 0$. Theorems~\ref{te:main1} and \ref{te:main2} tell us that 
\begin{align}
\label{eq:h1}
\He(\mT_n^\Omega)  \atv \He(\mT^\bullet) + \max(\He(\mT^1), \ldots, \He(\mT^{\Delta_{\langle n \rangle}})).
\end{align}
Note that the height $\He(\mT^\bullet)$  follows a geometric geometric distribution
\begin{align}
\label{eq:h2}
\Pr{\He(\mT^\bullet) = k} = \Ex{\xi}^k (1 - \Ex{\xi}).
\end{align}
Using extreme value statistics it follows that for any sequence $k_n$ with $k_n \to \infty$ it holds uniformly for all integers $k \ge k_n$
\begin{multline}
\label{eq:h3}
\Prb{ \max_{1 \le i \le n} \He(\mT^i) \le k} = \\
\exp\left( -\exp\left( \log\left(n\right) -k \log\left(\frac{1}{\Ex{\xi}}\right) +  \log\left(\frac{c_{\mathrm{H}}\Ex{\xi}}{1 - \Ex{\xi}}\right)\right)(1 + o(1)) \right)
\end{multline}
and hence
\begin{align}
\max_{1 \le i \le n} \He(\mT^i) = \frac{\log n}{\log\left( \frac{1}{\Ex{\xi}} \right) } + O_p(1).
\end{align}
Using Corollary~\ref{co:m1} and a time-reversal argument, it follows that
\begin{align}
\label{eq:ccomb2}
\max_{1 \le i \le \Delta_{\langle n \rangle}} \He(\mT^i) \atv \max_{1 \le i \le \frac{1 - \Ex{\xi}}{\Pr{\xi \in \Omega}} n} \He(\mT^i) =  \frac{\log n}{\log\left( \frac{1}{\Ex{\xi}} \right) } + O_p(1).
\end{align}
Combining~\eqref{eq:h1} and~\eqref{eq:ccomb2}, we obtain:
\begin{corollary}
	\label{co:height}
	The height of the tree $\mT_n^\Omega$ satisfies
	\begin{align} 
	\label{eq:stbound}
	\He(\mT_n^\Omega) = \log(n) / \log(1/\Ex{\xi}) + O_p(1).
	\end{align}
\end{corollary}
The result agrees with the case $\Omega=\ndN_0$ established by Kortchemski in~\cite[Thm. 4]{MR3335012}. 
Convergence of moments may be verified as well:
\begin{proposition}
	\label{pro:mom}
	For any fixed constant $p \ge 1$ 
	\begin{align}
	\Exb{\left( \frac{\He(\mT_n^\Omega)}{\log n } \right)^{p}} \to \left(\log(1/\Ex{\xi}) \right)^p.
	\end{align}
\end{proposition}
The proof is by similar arguments as for~\cite[Prop. 2.11]{MR3335012}, where the case $\Omega = \ndN_0$ was treated:
\begin{proof}[Proof of Proposition~\ref{pro:mom}]
	It follows from~\eqref{eq:stbound} that
	\begin{align}
	\frac{\He(\mT_n^\Omega)}{\log n }  \convp  \log(1/\Ex{\xi}).
	\end{align}
	Hence it suffices to show that $\frac{\He(\mT_n^\Omega)}{\log n }$ is $p$-uniformly integrable. For this, it suffices to show that $\frac{\He(\mT_n^\Omega)}{\log n }$ is bounded in $\mathbb{L}^q$ for some fixed $q>p$. Let $K>0$ be a constant. Then
	\begin{align}
	\label{eq:k1}
	\Exb{\left( \frac{\He(\mT_n^\Omega)}{\log n } \right)^{q}} \le  K^q + \Exb{\He(\mT_n^\Omega)^{q}, \He(\mT_n^\Omega) > K \log n}.
	\end{align}
	Using the trivial bound $\He(\mT_n^\Omega) \le |\mT_n^\Omega|$, it follows that 
	\begin{multline}
	\label{eq:k2}
	\Exb{\He(\mT_n^\Omega)^{q}, \He(\mT_n^\Omega) > K \log n} \le \\ \frac{2n}{\Pr{\xi \in \Omega}} \Pr{ \He(\mT_n^\Omega) > K \log n} + \Exb{|\mT_n^\Omega|^{q},  |\mT_n^\Omega| >\frac{2n}{\Pr{\xi \in \Omega}}  }.
	\end{multline}
	Letting $(\xi_i)_{i \ge 1}$ denote independent copies of $\xi$, it follows from the Chernoff bounds, the Potter bounds, and Equation~\eqref{eq:gendensity} that uniformly for all integers $k > 2n / \Pr{\xi \in \Omega}$
	\begin{align}
	\Pr{|\mT_n^\Omega| = k} &\le \Pr{L_\Omega(\mT) = n}^{-1} \Prb{\sum_{i=1}^k \one_{\xi_i \in \Omega} = n} \\
	&\le O(n^{2+\alpha}) \exp\left( - \frac{|n - k \Pr{\xi \in \Omega}|^2}{2 k}\right) \nonumber \\
	&\le \exp( - \Theta(k))  .\nonumber
	\end{align}
	This entails
	\begin{align}
	\label{eq:k3}
	\Exb{|\mT_n^\Omega|^{q},  |\mT_n^\Omega| >\frac{2n}{\Pr{\xi \in \Omega}}  } = \exp(-\Theta(n)).
	\end{align}
	Furthermore, using~\eqref{eq:h0}, the Potter bounds, and Equation~\eqref{eq:gendensity} it follows that 
	\begin{align}
	\label{eq:k4}
	\Pr{ \He(\mT_n^\Omega) > K \log n} \le O\left(n^{2+\alpha- K\log\left(1/\Ex{\xi}\right)}\right).
	\end{align}
	Taking $K$ sufficiently large, it follows from Inequalities~\eqref{eq:k1},~\eqref{eq:k2},~\eqref{eq:k3}, and~\eqref{eq:k4}
	\begin{align}
	\Exb{\left( \frac{\He(\mT_n^\Omega)}{\log n } \right)^{q}} \le K^q + \exp(-\Theta(n)) + O\left(n^{3+\alpha- K\log\left(1/\Ex{\xi}\right)}\right) = K^q + o(1).
	\end{align}
	This verifies that~$\left( \frac{\He(\mT_n^\Omega)}{\log n } \right)^{q}$ is bounded in $\mathbb{L}^p$, and completes the proof.
\end{proof}

\subsubsection{Counting fringe subtrees}

For any finite plane tree $T$ we may consider the functional $N_T(\cdot )$ that takes a plane tree as input and returns the number of occurrences of $T$ at a fringe. It is easy to see that the unconditioned $\xi$-Galton--Watson tree $\mT$ satisfies
\begin{align}
\Ex{N_T(\mT)} = \Pr{\mT = T} / (1 -\Ex{\xi}).
\end{align}
By Corollary~\ref{co:m1} and a time-reversal argument it follows that
\begin{align}
\frac{N_T(\mT_n^\Omega)}{n \Pr{\xi \in \Omega}^{-1}} \convp \Pr{\mT = T}.
\end{align}
This agrees with known results for the case $\Omega=\ndN_0$, see~\cite[Thm. 7.12]{MR2908619}.



We may also derive exponential concentration inequalities: Let $F(\cdot)$ denote a function that takes a list of at least $|T|$ integers as input, extends it cyclically, and returns the number of occurrences of the depth-first-search ordered list of outdegrees of $T$ as a substring of the cyclically extended input. Note that such substrings cannot overlap. Hence changing a single coordinate of the input changes the value of $F$ by at most $1$. 

Recalling that $d_1, \ldots, d_{|\mT_n^\Omega|}$ denotes the outdegree list corresponding to $\mT_n^\Omega$, we may write
\begin{align}
\label{eq:nt}
N_T(\mT_n^\Omega) = F(d_1, \ldots, d_{|\mT_n^\Omega|}).
\end{align}
Letting  $(\xi)_{i \ge 1}$ denote independent copies of $\xi$, it follows by McDiarmid's inequality that for all $x>0$ and  $\ell \ge |T|$
\begin{align}
\label{eq:F}
\Pr{ |F(\xi_1, \ldots, \xi_\ell) - \Pr{\mT= T}\ell| \ge x } \le 2 \exp( - 2	x^2 / \ell).
\end{align}
Here we have used that $\Ex{F(\xi_1, \ldots, \xi_\ell)} = \ell \Pr{\mT=T}$, since $F$ cyclically extends the input list. The number $L_\Omega(\mT)$ of vertices of the unconditioned $\xi$-Galton--Watson tree $\mT$  with outdegree in $\Omega$ is known to equal the total of number of vertices in a Galton--Watson tree with a different offspring distribution~\cite{MR3335013} (that is critical/subcritical heavy-tailed/light-tailed if and only if $\xi$ is). See Sections~\ref{sec:cond} and~\ref{sec:0not}. Hence it follows by a general result of Janson~\cite{MR2908619}  that
\begin{align}
\label{eq:partition}
\Pr{L_\Omega(\mT) = n} = \exp(o(n)).
\end{align}
We may write
\begin{multline*}
\Pr{|\mT_n^\Omega| \notin (1 \pm \epsilon) n/\Pr{\xi \in \Omega} } \le \\ \Pr{L_\Omega(\mT)=n}^{-1} \Prb{L_\Omega(\mT) \notin \frac{1}{1 \pm \epsilon}|\mT|\Pr{\xi \in \Omega}, L_\Omega(\mT)=n}.
\end{multline*}
As plane trees correspond to cyclic shifts of balls-in-boxes configurations (see Equation~\eqref{eq:teq}), the Chernoff bounds and Equation~\eqref{eq:partition} imply that this bound simplifies to $\exp(-\Theta(n))$. Using  Equations~\eqref{eq:nt}  this implies
\begin{multline*}
\Prb{N_T(\mT_n^\Omega) \notin (1 \pm \epsilon)\Pr{\mT= T) n/ \Pr{\xi \in \Omega}}}  \le \\ \exp(-\Theta(n)) + \exp(o(n)) \sum_{\ell \in (1 \pm \epsilon) n/\Pr{\xi \in \Omega} }\Prb{F(\xi_1, \ldots, \xi_\ell) \notin (1 \pm \epsilon) \frac{\Pr{\mT= T}{\Pr{\xi \in \Omega}}} n}.
\end{multline*}
By Equation~\eqref{eq:F} this bound simplifies to $\exp(-\Theta(n))$. We obtain:
\begin{lemma}
	\label{le:free}
	For any $\epsilon>0$ it holds that
	\begin{align}
	\label{eq:fringesubtrees}
	\Prb{ \left| \frac{ N_T(\mT_n^\Omega)}{n\Pr{\xi \in \Omega}^{-1}}  - \Pr{\mT= T} \right| \ge \epsilon} &\le \exp(-\Theta(n)), \\
	\label{eq:ldev}
	\Prb{ | |\mT_n^\Omega| - n \Pr{\xi \in \Omega}^{-1}| \ge \epsilon } &\le \exp(-\Theta(n)).
	\end{align}
\end{lemma}
This agrees with the known case $\Omega=\ndN_0$, see for example~\cite[Thm. 6.5]{2016arXiv161202580S}.

\begin{remark}
	The proof of Lemma~\ref{le:free} does not use any of the assumptions on $\xi$. Hence Inequality~\eqref{eq:fringesubtrees} holds for any  offspring distributions $\xi$ (with $\Pr{\xi=0}>0$, $\Pr{\xi \ge 2} > 0$, and $\Pr{\xi \in \Omega} > 0$), that is either critical, or subcritical and heavy-tailed. Furthermore, it is well-known that if $\xi$ subcritical and light-tailed, then  the study of $\mT_n^\Omega$ may be reduced to one of these two cases by tilting the offspring distribution.
\end{remark}

Lemma~\ref{le:free} implies by the  Borel--Cantelli Lemma that
\begin{align}
\frac{ N_T(\mT_n^\Omega)}{|\mT_n^\Omega|} \convas \Pr{\mT= T}
\end{align}
This may be expressed equivalently in terms of random probability measures. Take the random tree $\mT_n^\Omega$, and let $\mu_n$ denote the (random) law of the fringe subtree at a uniformly selected vertex of $\mT_n^\Omega$. Then
\begin{align}
\mu_n \convas \mathcal{L}(\mT),
\end{align}
with $\mathcal{L}(\mT)$ denoting the law of the unconditioned Galton--Watson tree $\mT$.

\subsubsection{Sizes of subtrees dangling from the vertex with maximal degree}
It follows from~\cite[Cor. 2.1]{MR2440928} (see Equation~\eqref{eq:gwt} below for details) that
\begin{align}
\label{eq:uncond}
\Pr{|\mT|=n} \sim\Pr{\xi = \lfloor(n-1)(1- \Ex{\xi})\rfloor} \sim \frac{f(n)}{ (n(1-\Ex{\xi}))^{1+\alpha}}.
\end{align}
Similar as for the height in \eqref{eq:h1}, we obtain by extreme value statistics the following limit for the  maximal size of the fringe subtrees $F_i(\mT_n^\Omega)$, $1 \le i \le  \Delta(\mT_n^\Omega)$ dangling from the vertex with maximum degree.
\begin{corollary}
	\label{co:fringe}
	There is a slowly varying function $f_0(n)$ with
	\begin{align}
	\label{eq:fringe}
	f_0(n) n^{-1/\alpha} \max_{1 \le i \le \Delta(\mT_n^\Omega)} |F_i(\mT_n^\Omega)| \convdis \mathrm{\text{Fr\'echet}}(\alpha). 
	\end{align}
\end{corollary}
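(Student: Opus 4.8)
The plan is to reproduce the derivation of~\eqref{eq:h1}--\eqref{eq:h3} carried out for the height, now with $\He(\cdot)$ replaced by $|\cdot|$ and with the tail asymptotics~\eqref{eq:uncond} of $|\mT|$ playing the role of~\eqref{eq:h0}. First I would discard the last few fringe subtrees. Fix a sequence $t_n \to \infty$ with $t_n = o(n^{1/\alpha})$, say $t_n = \lceil \log n \rceil$. By Theorem~\ref{te:main2}, with probability tending to one we have $\sum_{i = \Delta(\mT_n^\Omega) - t_n}^{\Delta(\mT_n^\Omega)} |F_i(\mT_n^\Omega)| < C t_n$, so that the largest of the sizes of the last $t_n + 1$ of these subtrees is at most $C t_n$ with high probability. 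Since for any slowly varying $f_0$ the sequence $n^{1/\alpha}/f_0(n)$ is regularly varying of positive index $1/\alpha$ and therefore dominates $\log n$, it follows that
\[
\max_{1 \le i \le \Delta(\mT_n^\Omega)} |F_i(\mT_n^\Omega)| = \max_{1 \le i \le \Delta(\mT_n^\Omega) - t_n} |F_i(\mT_n^\Omega)| + o_p(n^{1/\alpha}/f_0(n)).
\]
Moreover, it follows from Theorem~\ref{te:main2} that $\max_{1 \le i \le \Delta(\mT_n^\Omega) - t_n} |F_i(\mT_n^\Omega)| \atv \max_{1 \le i \le M_n} |\mT^i|$, where $M_n := \Delta(\mT_n^\Omega) - t_n$ and the $(\mT^i)_{i \ge 1}$ are independent copies of $\mT$, independent of $M_n$. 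Hence it suffices to identify the limit law of $f_0(n) n^{-1/\alpha} \max_{1 \le i \le M_n} |\mT^i|$ for a suitable slowly varying $f_0$.

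Second, I would record the tail behaviour of $|\mT|$. Combining~\eqref{eq:uncond} with assumption~\eqref{eq:xi} gives $\Pr{|\mT| = m} \sim f((1 - \Ex{\xi})m)\,((1 - \Ex{\xi})m)^{-1-\alpha}$ as $m \to \infty$, which is regularly varying of index $-1-\alpha$; summing over $m$ and applying Karamata's theorem shows that $\Pr{|\mT| \ge m}$ is regularly varying of index $-\alpha$, say $\Pr{|\mT| \ge m} = m^{-\alpha} \tilde\ell(m)$ with $\tilde\ell$ slowly varying. In particular $|\mT|$ lies in the max-domain of attraction of $\mathrm{\text{Fr\'echet}}(\alpha)$: inverting the regularly varying map $m \mapsto \Pr{|\mT| \ge m}$ yields a sequence $b_m \to \infty$ of the form $b_m = m^{1/\alpha} L(m)$ with $L$ slowly varying, such that for independent copies $\mT^i$ of $\mT$ one has $b_m^{-1} \max_{1 \le i \le m} |\mT^i| \convdis \mathrm{\text{Fr\'echet}}(\alpha)$.

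Finally I would pass from the deterministic index to the random one and assemble the pieces. Theorem~\ref{te:main1} gives $\Delta(\mT_n^\Omega) = a_n + O_p(g(n) n^{1/\theta})$ with $a_n := n(1 - \Ex{\xi})\Pr{\xi \in \Omega}^{-1}$, and since $\theta = \min(\alpha,2) > 1$ we have $g(n) n^{1/\theta} = o(n)$, whence $M_n / a_n \convp 1$. As $M_n$ is independent of $(\mT^i)_{i \ge 1}$ and tends to infinity, a standard Anscombe-type transfer argument --- conditioning on $M_n$ and using monotonicity of the maximum in the number of terms together with the uniformity of the convergence to the Fr\'echet limit --- gives $b_{M_n}^{-1} \max_{1 \le i \le M_n} |\mT^i| \convdis \mathrm{\text{Fr\'echet}}(\alpha)$; and since $b$ is regularly varying and $M_n/a_n \convp 1$ we have $b_{M_n}/b_{a_n} \convp 1$, so the same limit holds with $b_{a_n}$ in place of $b_{M_n}$. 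Setting $f_0(n) := n^{1/\alpha}/b_{a_n}$, which is slowly varying because $a_n$ is a fixed multiple of $n$ and $b$ is regularly varying of index $1/\alpha$, and combining with the first paragraph via Slutsky's theorem yields~\eqref{eq:fringe}.

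The step I expect to require the most care is the last one: one has to absorb simultaneously the truncation error from the $t_n$ discarded subtrees and the replacement of the random number $M_n$ of subtrees by its deterministic main term $a_n$ inside the regularly varying normalization. This is exactly where the sharp fluctuation estimate of Theorem~\ref{te:main1} (not merely a law of large numbers for $\Delta(\mT_n^\Omega)$) and the independence provided by Theorem~\ref{te:main2} enter; the tail computation and the extreme-value limit itself are routine.
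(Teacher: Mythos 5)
Your argument is correct and follows essentially the same route as the paper, which at this point simply says ``similar as for the height in~\eqref{eq:h1}, we obtain by extreme value statistics\ldots''; you have merely filled in the details: discard the last $t_n$ fringe subtrees using Theorem~\ref{te:main2} and absorb their $O_p(t_n)$ contribution, pass to a max of i.i.d.\ copies of $\mT$ indexed by $M_n=\Delta(\mT_n^\Omega)-t_n$, read off the regularly varying tail of $|\mT|$ from~\eqref{eq:uncond} and~\eqref{eq:xi}, and replace the random index $M_n$ by its deterministic main term $n(1-\Ex{\xi})\Pr{\xi\in\Omega}^{-1}$ via Theorem~\ref{te:main1} and the uniform convergence theorem for slowly varying functions.
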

The slowly varying function $f_0$ may chosen to satisfy
\begin{align}
f_0(n) = n^{-1/\alpha}\inf\left\{m \ge 1 \,\,\Bigg\vert\,\, \Pr{|\mT| > m} \le \frac{\Pr{\xi \in \Omega}}{(1 - \Ex{\xi})n} \right\}.
\end{align}
By Karamata's theorem it holds that
\begin{align}
\Pr{|\mT| >n} \sim \frac{f(n)}{\alpha(1- \Ex{\xi})^{1 + \alpha}} n^{-\alpha}.
\end{align}
Thus, if for example  $f(n) \sim c$ for some constant $c>0$, then we may set
\begin{align}
f_0(n) = \left( \frac{c}{\alpha \Pr{\xi \in \Omega}} \right)^{1/\alpha} \frac{1}{1- \Ex{\xi}}.
\end{align}
Or, if $f(n)$ is any slowly varying function and $1<\alpha<2$, Equation~\eqref{eq:defofg} entails that we may set
\begin{align}
f_0(n) = \left( \frac{1}{\Pr{\xi \in\Omega} \Gamma(1-\alpha)} \right)^{1/\alpha} \frac{g(n)}{1- \Ex{\xi}}.
\end{align}

We let $\mathbb{D}([0,1], \ndR)$ denote the space of all c\`adl\`ag functions $[0,1] \to \ndR$, endowed with the Skorokhod $J_1$-topology. Equation~\eqref{eq:uncond} implies that the random walk $(Z_i)_{i \ge 1}$ with step-size distribution $|\mT|$ satisfies
\begin{align}
\left(\frac{Z_{\lfloor nt\rfloor} - nt / (1 - \Ex{\xi})}{f_1(n) n^{1/\theta}} , 0 \le t \le 1 \right) \convdis (X_t)_{0 \le t \le 1}
\end{align}
in the space $\mathbb{D}([0,1], \ndR)$ for some slowly varying function $f_1(n)$. By~\cite[Lem. 2.10]{MR3335012}, we may set
\begin{align}
f_1(n) = \frac{g(n)}{(1- \Ex{\xi})^{1 + 1/\theta}}
\end{align} 
for the function $g$  defined in Equation~\eqref{eq:defofg}. Using~\cite[Lem. 5.7]{MR2946438}, we obtain:
\begin{corollary}
	\label{co:yo}
	It holds that
	\begin{align}
	\left(\frac{\sum_{i=1}^{\lfloor \Delta(\mT_n^\Omega) t\rfloor}|F_i(\mT_n^\Omega)| - \Delta(\mT_n^\Omega)t / (1 - \Ex{\xi})}{\frac{g(n)}{\Pr{\xi \in \Omega}^{1/\theta}} n^{1/\theta}}, 0 \le t \le 1 \right) \convdis \left(\frac{1}{1- \Ex{\xi} }X_t\right)_{0 \le t \le 1}
	\end{align}
	in the space $\mathbb{D}([0,1], \ndR)$.
\end{corollary}
This was established by Kortchemski~\cite[Thm. 3]{MR3335012} for the case $\Omega=\ndN_0$. The proof of Corollary~\ref{co:yo} is by similar arguments.

\subsubsection{The remaining vertex outdegrees}
We order the outdegrees of the tree $\mT_n^\Omega$ in descending order $\Delta_1(\mT_n^\Omega) \ge \Delta_2(\mT_n^\Omega) \ge \ldots$. It was shown in \cite[Prop. 3.2]{MR3687241} that the maximal outdegree of the unconditioned Galton--Watson tree $\mT$ satisfies
\begin{align}
\label{eq:s0}
\Pr{\Delta(\mT) = n} &\sim \Pr{\xi=n}/(1 - \Ex{\xi}).
\end{align}
Similar as for the height in \eqref{eq:h1}, it follows that the second largest degree $\Delta_2(\mT_n^\Omega)$ satisfies
\begin{align}
\label{eq:s1}
\Delta_2(\mT_n^\Omega)  \atv \max(\Delta(\mT^1), \ldots, \Delta(\mT^{\Delta_{\langle n \rangle}})).
\end{align}
By extreme value statistics we obtain analogously as for~\eqref{co:height}:
\begin{corollary}
	\label{co:seconddeg}
	There is a slowly varying function $f_2(n)$ with
	\begin{align}
	\label{eq:seconddeg}
	\Delta_2(\mT_n^\Omega) / (f_2(n) n^{1/\alpha}) \convdis \mathrm{\text{Fr\'echet}}(\alpha).
	\end{align}
\end{corollary}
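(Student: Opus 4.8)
The plan is to deduce the statement from the distributional approximation~\eqref{eq:s1} together with the tail estimate~\eqref{eq:s0} and classical extreme value theory. Since $\Delta_2(\mT_n^\Omega)$ is, by~\eqref{eq:s1}, within total variation $o(1)$ of $Z_n := \max(\Delta(\mT^1),\dots,\Delta(\mT^{M_n}))$, and $d_{TV}$ is unaffected by the injective scaling $x \mapsto x/b_n$, the variables $\Delta_2(\mT_n^\Omega)/b_n$ and $Z_n/b_n$ have the same weak limit whenever one exists; so it suffices to find a slowly varying $f_2$ such that, with $b_n = f_2(n)n^{1/\alpha}$, the maximum $Z_n/b_n$ converges in distribution to the Fr\'echet law of shape $\alpha$, i.e. to the law with distribution function $x \mapsto \exp(-x^{-\alpha})$ on $(0,\infty)$. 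Put $c := (1-\Ex{\xi})\Pr{\xi \in \Omega}^{-1} > 0$, so that $M_n = cn + O_p(g(n)n^{1/\theta})$; since $\theta = \min(\alpha,2) > 1$ and $g$ is slowly varying we have $g(n)n^{1/\theta} = o(n)$, hence $M_n/(cn) \convp 1$ and in particular $M_n \to \infty$ in probability.

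Next I would record the tail of $\Delta(\mT)$. Summing the equivalence~\eqref{eq:s0} and applying Karamata's theorem to the regularly varying summands $\Pr{\xi = k} = f(k)k^{-1-\alpha}$ from~\eqref{eq:xi} gives
\[
\Pr{\Delta(\mT) \ge \ell} = \sum_{k \ge \ell}\Pr{\Delta(\mT) = k} \sim \frac{f(\ell)}{\alpha(1-\Ex{\xi})}\,\ell^{-\alpha},
\]
which is regularly varying of index $-\alpha$. Hence $\Delta(\mT)$ lies in the max-domain of attraction of the Fr\'echet law of shape $\alpha$: taking $a_m := \inf\{x \ge 0 : \Pr{\Delta(\mT) > x} \le 1/m\}$ one has $m\,\Pr{\Delta(\mT) > a_m} \to 1$, the sequence $(a_m)_{m \ge 1}$ is regularly varying of index $1/\alpha$, and for every \emph{deterministic} sequence of integers $m = m(n) \to \infty$,
\[
\Prb{\max(\Delta(\mT^1),\dots,\Delta(\mT^m)) \le x\,a_m} \longrightarrow \exp(-x^{-\alpha}), \qquad x > 0,
\]
by the Fisher--Tippett--Gnedenko theorem.

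The step I expect to be the main (if routine) obstacle is the passage from this deterministic index to the random index $M_n$. I would fix $\epsilon \in (0,1)$ and work on the event $E_{n,\epsilon} = \{(1-\epsilon)cn \le M_n \le (1+\epsilon)cn\}$, which has probability tending to $1$ by the concentration of $M_n$. On $E_{n,\epsilon}$, monotonicity of $m \mapsto \max(\Delta(\mT^1),\dots,\Delta(\mT^m))$ sandwiches $Z_n$ between the maxima over roughly the first $(1-\epsilon)cn$ and the first $(1+\epsilon)cn$ of the copies. Dividing by $a_{\lceil cn \rceil}$ and using that $(a_m)$ is regularly varying of index $1/\alpha$, so that $a_{\lceil (1\pm\epsilon)cn\rceil}/a_{\lceil cn\rceil} \to (1\pm\epsilon)^{1/\alpha}$, the two sandwiching quantities converge in distribution to $(1\pm\epsilon)^{1/\alpha}$ times a Fr\'echet$(\alpha)$ variable. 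Letting $\epsilon \downarrow 0$ and invoking continuity of $x \mapsto \exp(-x^{-\alpha})$, a standard squeeze gives that $Z_n/a_{\lceil cn\rceil}$ converges in distribution to the Fr\'echet law of shape $\alpha$.

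Finally I would set $f_2(n) := a_{\lceil cn\rceil}/n^{1/\alpha}$; this is slowly varying precisely because $(a_m)$ is regularly varying of index $1/\alpha$. Combining the last convergence with the reduction of the first paragraph yields $\Delta_2(\mT_n^\Omega)/(f_2(n)n^{1/\alpha}) \convdis \mathrm{Fr\acute{e}chet}(\alpha)$, as claimed. (The only point worth double-checking is that in the regime $\alpha \ge 2$ the fluctuation $O_p(g(n)n^{1/\theta})$ of $M_n$ is of larger order than the normalising scale $n^{1/\alpha}$; this is harmless, since it is still $o(M_n)$, which is all the squeeze argument uses.)
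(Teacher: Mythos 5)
Your proposal is correct and executes precisely the argument that the paper compresses into the phrase ``by extreme value statistics'': you start from the total-variation approximation~\eqref{eq:s1}, derive the regularly varying tail of $\Delta(\mT)$ from~\eqref{eq:s0} via Karamata, invoke Fisher--Tippett--Gnedenko for a deterministic number of copies, and pass to the random index $M_n$ by a sandwich argument using $M_n/(cn)\convp 1$. This is the same route the paper implicitly takes; the only noteworthy point, which you flag correctly, is that for $\alpha\ge 2$ the fluctuation scale $g(n)n^{1/\theta}$ of $M_n$ may exceed the normalizing scale $n^{1/\alpha}$, but only $M_n/(cn)\convp 1$ is needed for the squeeze, so this is harmless.
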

This generalizes the case $\Omega=\ndN_0$ established by Janson~\cite[Thm. 19.34]{MR2908619}) and  Kortchemski~\cite[Thm. 1]{MR3335012}. The slowly varying function $f_2$ may be set to
\begin{align}
\label{eq:dohhhh}
f_2(n) = n^{-1/\alpha}\inf\left\{m \ge 1 \,\,\Bigg\vert\,\, \Pr{\Delta(\mT) > m} \le \frac{\Pr{\xi \in \Omega}}{(1 - \Ex{\xi})n} \right\}.
\end{align}
By Karamata's theorem it holds that
\begin{align}
\label{eq:ddddd}
\Pr{\Delta(\mT) >n} \sim \frac{f(n)}{\alpha(1- \Ex{\xi})} n^{-\alpha} \sim \frac{\Pr{\xi > n}}{1 - \Ex{\xi}}.
\end{align}
So, we may just as well set
\begin{align}
\label{eq:forreal}
f_2(n) = n^{-1/\alpha}\inf\left\{m \ge 1 \,\,\Bigg\vert\,\, \Pr{\xi > m} \le \frac{\Pr{\xi \in \Omega}}{n} \right\}.
\end{align}

It is not clear how to evaluate this in general. However, for some special cases it is fairly easy: For example, if $f(n) \sim c$ for some constant $c>0$, then we may set
\begin{align}
f_2(n) = \left( \frac{c}{\alpha \Pr{\xi \in \Omega}} \right)^{1/\alpha}.
\end{align}
Or, if $f(n)$ is any slowly varying function and $1<\alpha<2$, Equation~\eqref{eq:defofg} entails that we may set
\begin{align}
f_2(n) = \left( \frac{1}{\Pr{\xi \in\Omega} \Gamma(1-\alpha)} \right)^{1/\alpha} g(n).
\end{align}

More generally, we may also treat the $i$th largest outdegree for $i \ge 2$:
\begin{theorem}
	\label{te:smalldeg}
	For any $i \ge 2$ and any fixed $x > 0$ it holds that
	\begin{align}
	\Prb{\frac{\Delta_i(\mT_n^\Omega)}{f_2(n)n^{1/\alpha}} \le x} \to \exp(-x^{-\alpha}) \sum_{s=0}^{i-1} \frac{x^{-\alpha s}}{s!}.
	\end{align}
	That is,
	\begin{align}
	\frac{\Delta_i(\mT_n^\Omega)}{f_2(n)n^{1/\alpha}} \convdis W_i
	\end{align}
	for a random variable $W_i$ having density
	\begin{align}
	\frac{\alpha  \exp(-x^{-\alpha })
		\left(x^{-\alpha }\right)^i}{x (i-1)!}
	\end{align}
	for $x>0$.
\end{theorem}
\begin{proof}
	Let us fix a constant $\delta>0$ with $1/\theta < \delta < 1$, and  set 
	\[
	u_n = \frac{1 - \Ex{\xi}}{\Pr{\xi \in \Omega}} n - n^\delta.
	\]
	From Equation~\eqref{eq:s0}, Theorem~\ref{te:main1},  Corollary~\ref{co:m1} and a time-reversal argument it follows that the largest outdegree  in the forest consisting of $F_0(\mT_n^\Omega)$ and $(F_i(\mT_n^\Omega))_{ u_n \le i \le \Delta(\mT_n^\Omega)}$ has order $O_p(n^{\delta/\alpha + o(1)})$. Hence it suffices to show that the $(i-1)$th largest outdegree in the forest $(F_i(\mT_n^\Omega))_{1 \le i \le u_n}$ admits $W_i$ as distributional limit after rescaling by $f_2(n) n^{1/\alpha}$.
	
	Corollary~\ref{co:m1} tells us that
	\[
	(F_i(\mT_n^\Omega))_{1 \le i \le u_n} \atv (\mT^i)_{1 \le i \le u_n},
	\]
	with $\mT^1, \mT^2, \ldots$ denoting independent copies of $\mT$. Let $\xi_1, \xi_2, $ denote independent copies of $\xi$ and set for all integers $k < 0$
	\[
	\tau_k = \inf\left\{d \ge 1 \,\,\Bigg\vert\,\, \sum_{i=1}^d (\xi_i -1) = k\right\}.
	\]
	The lexicographically ordered list of outdegrees in $(\mT^i)_{1 \le i \le u_n}$ is distributed like
	\[
	(\xi_1, \ldots, \xi_{\tau_{-u_n}}).
	\]
	Note that~\eqref{eq:uncond} entails
	\[
	\tau_{-u_n} = u_n / (1- \Ex{\xi}) + O_p(n^{1/\theta + o(1)}).
	\]
	In particular, $\tau_{-u_n} > n/\Pr{\xi \in \Omega} - n^{\delta}$ with a probability that tends to $1$ as $n \to \infty$.  It follows that the $(i-1)$th largest entry $M_n$ of the list $	(\xi_1, \ldots, \xi_{\tau_{-u_n}})$ is with high probability among the first $		n/\Pr{\xi \in \Omega} - n^{\delta}$ coordinates. That is, asymptotically distributed like the $(i-1)$th largest outdegree in $ \lfloor	n/\Pr{\xi \in \Omega} - n^{\delta} \rfloor$ independent copies of $\xi$. 	By extreme value statistics, see~\cite[Thm. 2.2.2]{MR691492}, and Equation~\eqref{eq:forreal} it follows that
	\[
	M_n / (f_2(n)n^{1/\alpha}) \convdis W_i.
	\]
	This completes the proof.
\end{proof}

\begin{remark}
	We proved Theorem~\ref{te:smalldeg} as a consequence of the main theorems. At least in the case $0 \in \Omega$, there is also a nice and short alternative approach:
	In Section~\ref{sec:cond} we describe for the case $0 \in \Omega$ how $\mT_n^\Omega$ may be sampled by taking a simply generated tree $\tilde{\mT}_n$ with $n$ vertices (with offspring distribution described in Equation~\eqref{eq:dec1}) and blowing up each vertex into an ancestry line by a process illustrated in Figure~\ref{fi:blowup}. This construction  goes back to Ehrenborg and M\'endez~\cite{MR1284403}, and was fruitfully applied in the probabilistic literature \cite{MR2135161,MR3335013, MR3164755,MR3227065}.  Applying results from \cite{MR2775110} and \cite[Cor. 2.7]{MR3335012} to the tree $\tilde{\mT}_n$ (compare with Equation~\eqref{eq:pre1} below) yields that the depth-first-search ordered list $\tilde{d}_1, \ldots, \tilde{d}_n$ satisfies the following: If $\tilde{j}_0$ denotes the smallest index with $\tilde{d}_{\tilde{j}_0} = \Delta(\tilde{\mT}_n)$, then 
	\begin{align}
	(\tilde{d}_{\tilde{j}_0}, \ldots, \tilde{d}_n, \tilde{d}_1, \ldots, \tilde{d}_{\tilde{j}_0 -1}) \atv (n - 1 - \tilde{\xi}_1 -  \ldots - \tilde{\xi}_{n-1}, \tilde{\xi}_1, \ldots, \tilde{\xi}_{n-1}).
	\end{align}
	Here $(\tilde{\xi}_i)_{i \ge 1}$ denote independent copies of the branching mechanism $\tilde{\xi}$ with probability generating function $\tilde{\phi}(z)$ stated in Equation~\eqref{eq:dec1}. The blow-up procedure transforms $\tilde{\xi}$ into a depth-first-search order respecting segment \begin{align}
	D := (\xi_1^{\Omega^c}, \ldots, \xi_L^{\Omega^c}, \xi^{\Omega})
	\end{align} of independent outdegrees. Here $(\xi_i^{\Omega^c})_{i \ge 1}$ denotes independent copies of $(\xi \mid \xi \in \Omega^c)$, $\xi^\Omega \eqdist (\xi \mid \xi \in \Omega)$, and $L$ an independent geometrically distributed integer with distribution 
	\begin{align}
	\Pr{L=k} = \Pr{\xi \in \Omega^c}^k/(1 - \Pr{\xi \in \Omega^c}).
	\end{align}
	(In case $\Omega = \ndN_0$ this means that $L=0$ is almost surely constant.) The description of the blowup of the first coordinate $ \sum_{i=1}^{n-1}(1 - \tilde{\xi}_i)$ is more delicate, and carried out in Lemma~\ref{le:capsule}. Let $(D_i)_{i \ge 1}$ be independent copies of $D$, and 		let $(L_i)_{i \ge 1}$ denote independent copies of $L$. We let $\frown$ denote a binary operator that concatenates any two given lists.  We obtain: 
	\begin{enumerate}[\qquad a)]
		\item 
		If $|\Omega^c| < \infty$, then 
		\begin{align}
		(d_{j_0+1}, \ldots, d_{ |\mT_n^\Omega|}, d_1, \ldots, d_{j_0 -1}) \atv D_1 \frown \ldots \frown D_{n-1} \frown( \xi_1^{\Omega^c}, \ldots, \xi_L^{\Omega^c}).
		\end{align}
		In particular,
		\begin{align}
		|\mT_n^\Omega| \atv \sum_{i=1}^n(1 + L_i) \eqdist n + \mathrm{NB}(n,\Pr{\xi \in \Omega}).
		\end{align}
		\item If $|\Omega| < \infty$, then 
		\begin{align}
		(d_{j_0+1}, \ldots, d_{ |\mT_n^\Omega|}, d_1, \ldots, d_{j_0 -1}) \atv D_1 \frown \ldots \frown D_{n}\frown( \xi_1^{\Omega^c}, \ldots, \xi_L^{\Omega^c}).
		\end{align}
		In particular,
		\begin{align}
		|\mT_n^\Omega| \atv \sum_{i=1}^{n+1}(1 + L_i) \eqdist n+1 + \mathrm{NB}(n+1,\Pr{\xi \in \Omega}).
		\end{align}
	\end{enumerate}
	From this we may directly deduce Theorem~\ref{te:smalldeg} using extreme value statistics.
\end{remark}

\subsection{Comparison with critical Galton--Watson trees}

Figures~\ref{fi:pic1}--\ref{fi:pic4} illustrate typical behaviour of large $n$-vertex Galton--Watson trees whose offspring distribution $\xi$ has a regularly varying density with index $-(\alpha+1)$. Each figure shows a  drawing of a simulation of the tree in the top left corner, and  the associated looptree (obtained by blowing up any vertex with outdegree $d$ into a cycle of circumference $d+1$, see~\cite{MR3286462}) in the top right corner. The bottom left corner shows the \L{}ukasiewicz path associated to the tree, and the bottom right corner the height process.  The colour gradient corresponds to the height of a vertex in the tree, and is used consistently in all four corners.

\begin{figure}[H]
	\centering
	\includegraphics[width=0.75\textwidth]{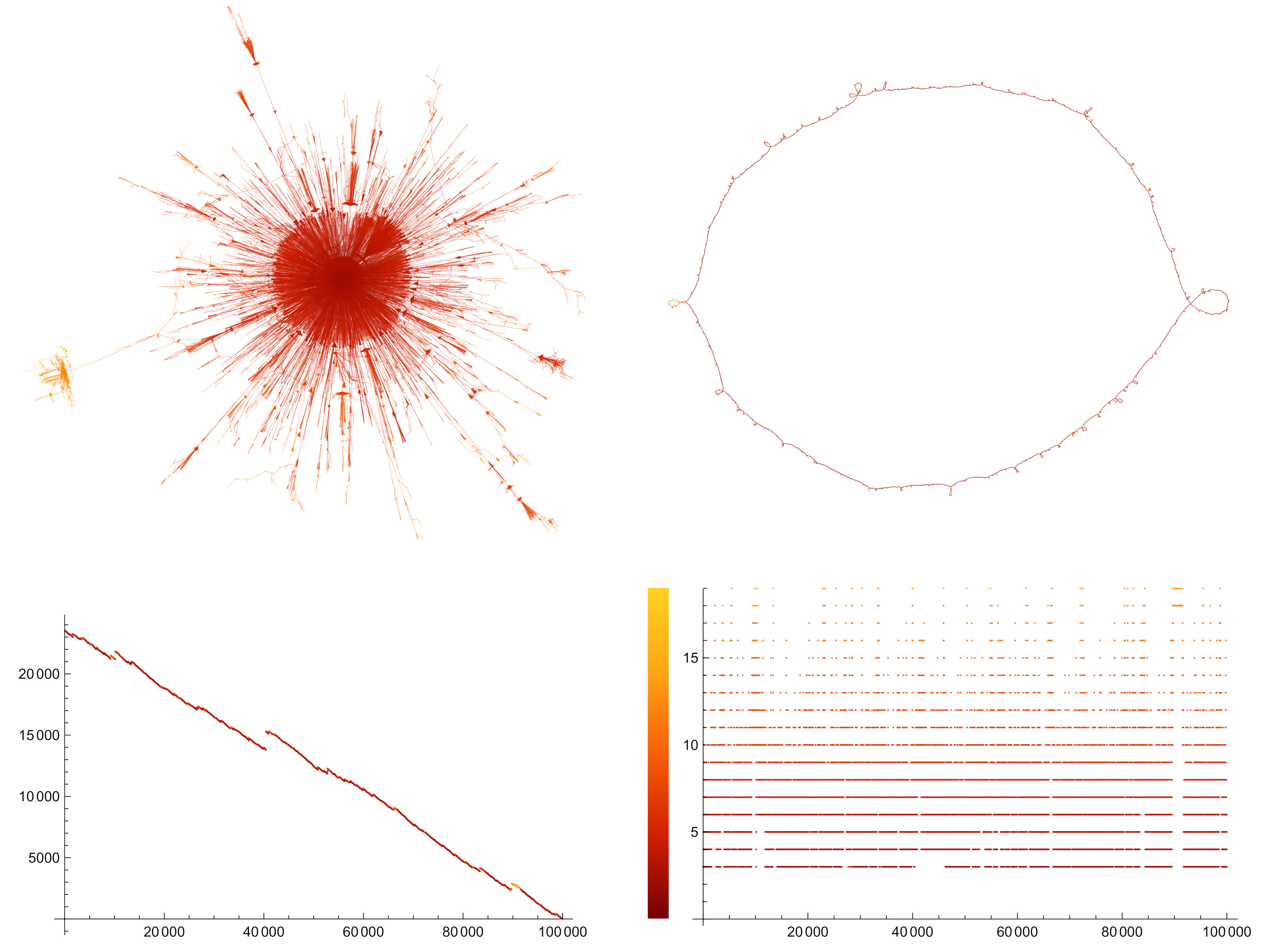}
	\caption{
		A subcritical Galton--Watson tree with 100k vertices. The offspring distribution $\xi$ was chosen to be of the form~\eqref{eq:xi} with $\alpha = 3/2$, $f(n)$ constant (except for  $f(0)$), and $\Ex{\xi} = 3/4$.
	}
	\label{fi:pic1}
\end{figure} 

The tree in Figure~\ref{fi:pic1} exhibits a unique vertex whose degree has order $(1-\Ex{\xi})n$, which is typical for the regime~\cite{MR2764126,MR2908619,MR3335012,MR3227065} of Theorem~\ref{te:main1}. A  similar condensation phenomenon has recently been shown to occur when $\xi$ is critical and lies in the domain of attraction of a Cauchy law~\cite{2018arXiv180410183K}, see Figure~\ref{fi:pic2} for an illustration. There the order of the maximum degree is $o(n)$, but varies regularly with index $1$, and is much larger than the second largest degree.

\begin{figure}[H]
	\centering
	\includegraphics[width=0.75\textwidth]{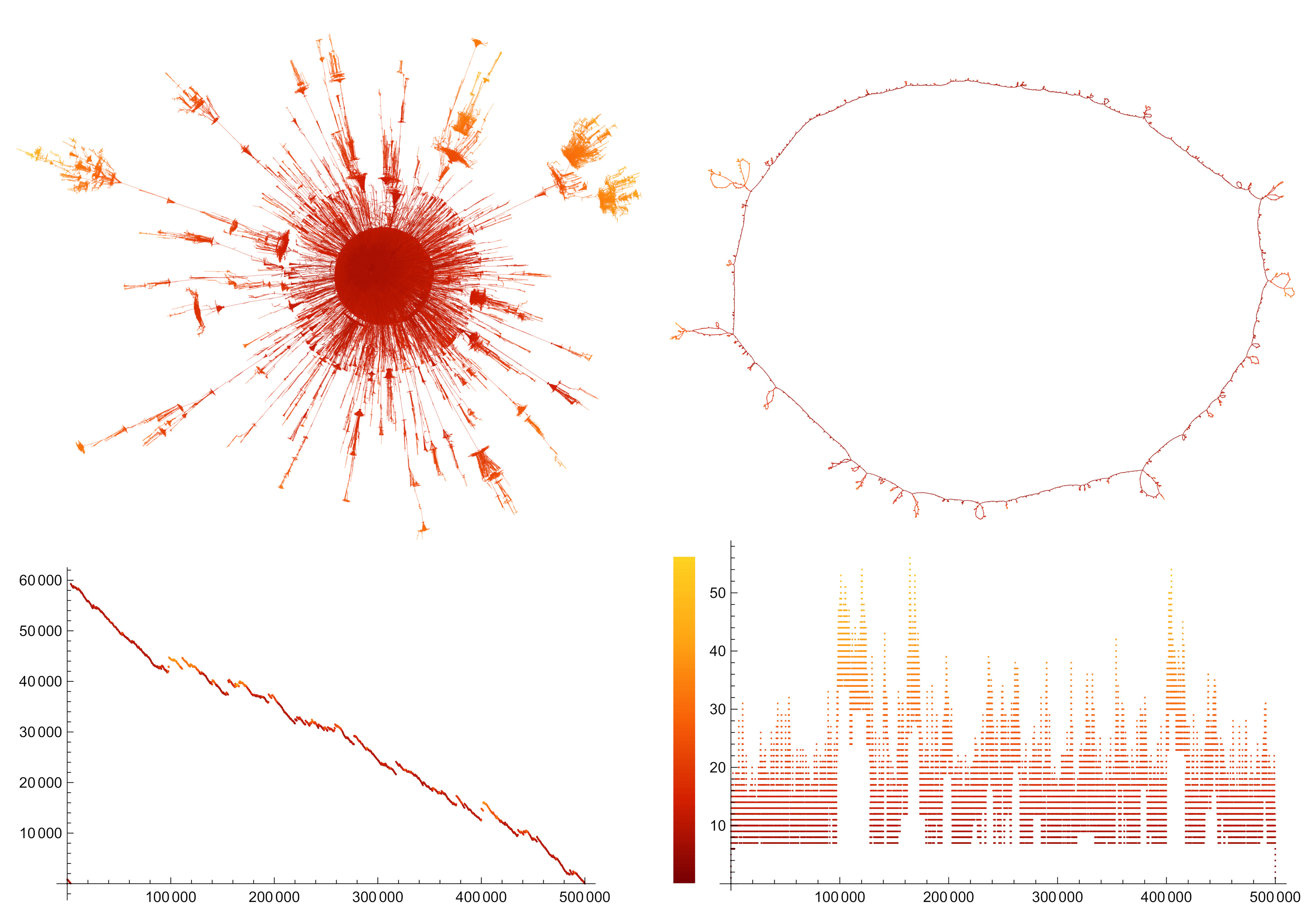}
	\caption{
		A critical Galton--Watson tree with 500k vertices. The offspring distribution was chosen to lie in the domain of attraction of the Cauchy law.
	}
	\label{fi:pic2}
\end{figure}

In the so called stable regime illustrated in Figure~\ref{fi:pic3}, $\xi$ is critical and lies in the domain of attraction of an $\alpha$-stable law for $1 < \alpha < 2$. There for each fixed $i\ge 1$ the order of the $i$th largest degree varies regularly with index $1/\alpha$~\cite{MR2908619,MR1964956,MR3286462}. Finally, the regime where $\xi$ is critical and lies in the domain of attraction of the normal law~\cite[Sec. 19]{MR2908619} is illustrated in Figure~\ref{fi:pic4}.

\begin{figure}[H]
	\centering
	\includegraphics[width=0.75\textwidth]{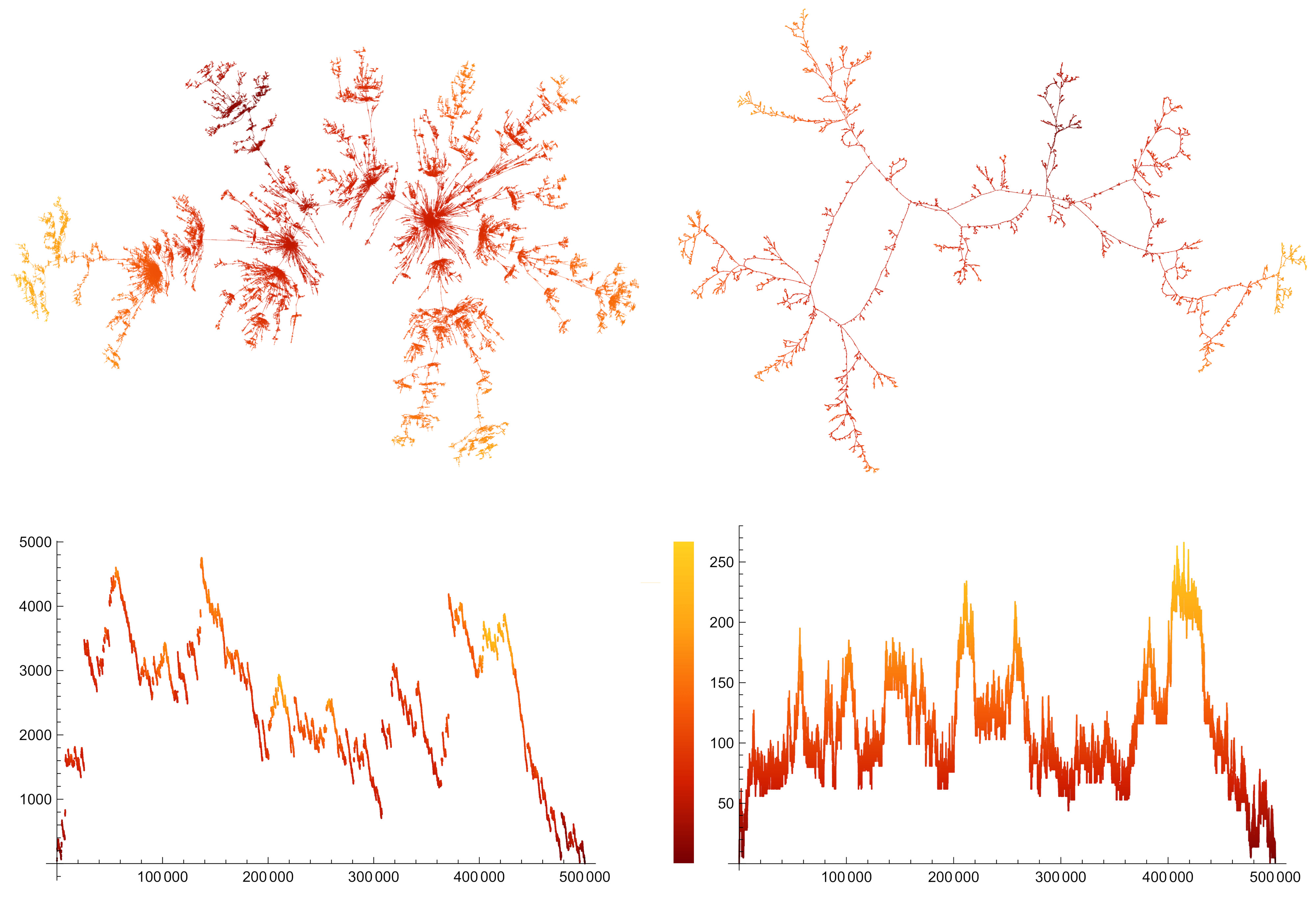}
	\caption{
		A critical Galton--Watson tree with 500k vertices. The offspring distribution was chosen to lie in the domain of attraction of the Airy law.
	}
	\label{fi:pic3}
\end{figure}  

\begin{figure}[H]
	\centering
	\includegraphics[width=0.75\textwidth]{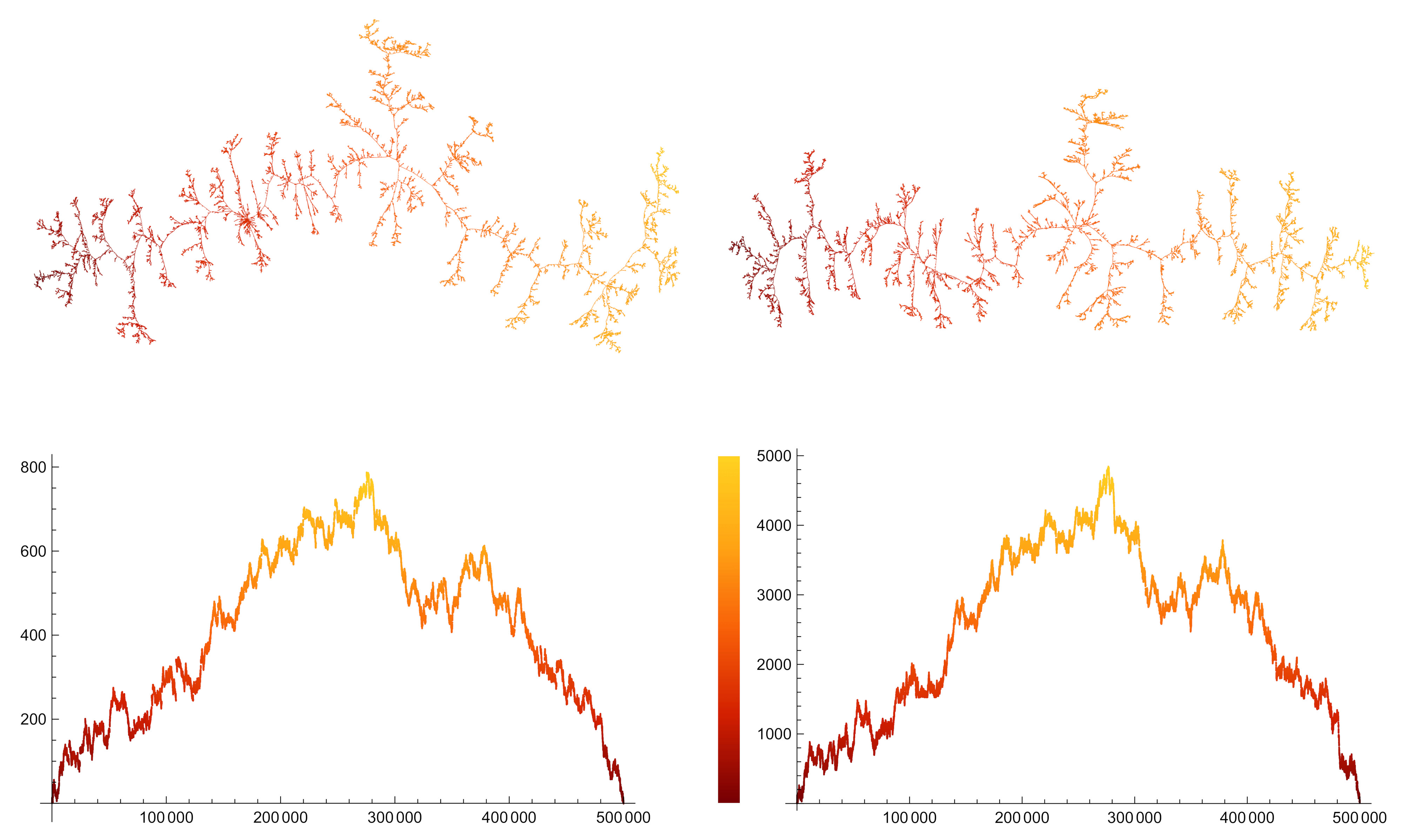}
	\caption{
		A critical Galton--Watson tree with 500k vertices. The offspring distribution was chosen to lie in the domain of attraction of the normal law.
	}
	\label{fi:pic4}
\end{figure}

\newpage
\section{Conditioning on the number of vertices}
\label{sec:cond1}

We start by establishing the limit theorems for the special case $\Omega=\ndN_0$ using results by Denisov, Dieker, and Shneer~\cite{MR2440928} and Armend{\'a}riz and Loulakis~\cite{MR2775110} on the big-jump domain for random walks.

\subsection{Plane trees correspond to cyclic shifts of balls-in-boxes configurations}
\label{sec:boxes}

A (planted) plane tree $T$ is a rooted unlabelled tree where each offspring set is endowed with a linear order. The outdegree of a vertex $v \in T$, denoted by $d_T^+(v)$, is its number of children. We let $\Delta(T)$ denote the maximal outdegree of $T$. The total number of vertices of $T$ is denoted by $|T|$.  Setting $n = |T|$, the tree $T$ is fully determined by the vector 
\[
(x_1, \ldots, x_n) = (d^+_T(v_1)-1, \ldots, d^+_T(v_n)-1),
\] with  $v_1, \ldots, v_n$ denoting the depth-first-search ordered list of vertices of $T$. The vector $(x_i)_{1 \le i \le n}$ satisfies $\sum_{i=1}^n x_i = -1$ and $\sum_{i=1}^k x_i \ge 0$ for all $ k < n$. The following result is classical:
\begin{lemma}[\cite{MR0138139}]
	\label{le:cyc}
	For any $r \ge 1$ and any vector $\mathbf{y} = (y_i)_{1 \le i \le n}$ of integers $y_i \ge -1$ with $\sum_{i=1}^n y_i = -r$ there exist precisely $r$ indices $i_0$ with the property that the cyclically shifted vector \[(\bar{y}_1, \ldots, \bar{y}_n) = (y_{i_0}, y_{i_0 +1}, \ldots, y_n, y_1, \ldots, y_{i_0-1})\] satisfies $\sum_{i=1}^{k} \bar{y}_i > -r$ for all $k < n$.
\end{lemma}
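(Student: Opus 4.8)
The plan is to prove this classical \emph{cycle lemma} by the standard partial-sums (ballot-type) argument, the only real work being the bookkeeping forced by the drift $-r/n$. Given $\mathbf{y}$, set $S_0 = 0$ and $S_k = y_1 + \dots + y_k$, and extend both $\mathbf{y}$ and $S$ over all of $\ndZ$ by periodicity, $y_{i+n} = y_i$, so that $S_{i+n} = S_i - r$ for every $i \in \ndZ$; in particular $S_i \to +\infty$ as $i \to -\infty$ and $S_i \to -\infty$ as $i \to +\infty$. Writing $t = i_0 - 1 \in \{0, 1, \dots, n-1\}$, the $k$-th partial sum of the shifted vector $(\bar y_1, \dots, \bar y_n)$ equals $S_{t+k} - S_t$, so a direct computation shows that $i_0$ has the desired property exactly when $S_{t+k} > S_t - r = S_{t+n}$ for all $0 \le k \le n-1$; that is, exactly when $S_{t+n}$ is a \emph{strict} minimum of the finite list $S_t, S_{t+1}, \dots, S_{t+n}$, attained only at its right endpoint.

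Next I would convert this window condition into a global one. Because every negative step of $S$ equals exactly $-1$ and $S_i \to +\infty$ as $i \to -\infty$, for each index $j$ the set $\{ i < j : S_i \le S_j\}$ is finite, so it makes sense to call $m \in \ndZ$ a \emph{strict descending ladder point} if $S_m < S_i$ for all $i < m$. An elementary argument using $S_{i+n} = S_i - r$ shows that $m$ satisfies the window condition above (with $m = t+n$) if and only if $m$ is a strict descending ladder point, and likewise that the set $L \subseteq \ndZ$ of strict descending ladder points is invariant under translation by $n$. Translating back, $i_0 \in \{1, \dots, n\}$ has the required property if and only if $t = i_0 - 1 \in L$, so the claim reduces to showing that $\{0, 1, \dots, n-1\}$ contains exactly $r$ elements of $L$.

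To finish, I would exhibit the ladder structure explicitly: for $v \in \ndZ$ let $m_v = \min\{ i \in \ndZ : S_i \le v\}$, which is well defined by the remarks above; since negative steps are $-1$ one gets $S_{m_v} = v$, and $m_v \in L$ with every element of $L$ arising this way, so $v \mapsto m_v$ is a strictly decreasing bijection $\ndZ \to L$. The quasi-periodicity $S_{i+n} = S_i - r$ forces $m_{v-r} = m_v + n$. Hence the interval $[m_v, m_v + n) = [m_v, m_{v-r})$ contains precisely the $r$ ladder points $m_v, m_{v-1}, \dots, m_{v-r+1}$; since $L$ is invariant under translation by $n$, \emph{every} block of $n$ consecutive integers — in particular $\{0, 1, \dots, n-1\}$ — contains exactly $r$ of them, which is the assertion.

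The main obstacle, and the only place where care is genuinely needed, is the drift bookkeeping: pinning down which window-minimum condition corresponds to a ``good'' shift with the correct strict inequalities (the hypothesis is $\sum \bar y_i > -r$, not $\ge -r$), and then the passage from the local window condition to the global ladder-point condition together with the $n$-periodicity of $L$ — all easy to state but easy to get off by one. A cleaner-looking but less self-contained alternative would be induction on $r$: locate one good shift from a global minimum of $(S_k)_{0 \le k \le n-1}$ and delete it to obtain a sum-$(-(r-1))$ instance; but performing that deletion compatibly with the cyclic structure is itself somewhat delicate, so I would prefer the direct ladder-point count above.
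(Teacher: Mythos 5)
Your proof is correct. The paper itself does not prove this lemma: it states it with a citation to Dvoretzky and Motzkin (the reference \cite{MR0138139}), so there is no in-paper argument to compare against. Your ladder-point argument is a clean, complete proof of the cited classical result. The reformulation (good shift $\iff$ the right endpoint $t+n$ is the strict minimum of the window $[t,t+n]$ $\iff$ $t+n$ is a strict descending ladder point of the bi-infinite, drift-$(-r/n)$ walk), the use of the skip-free-downward hypothesis $y_i \ge -1$ to get $S_{m_v} = v$ and hence the bijection $v \mapsto m_v$ from $\ndZ$ onto the ladder set $L$, and the quasi-periodicity $m_{v-r} = m_v + n$ to count $r$ ladder points in any window of length $n$, all check out; the only step stated without detail (window minimum $\iff$ global ladder point) does go through by the maximality argument you allude to, using $S_{j+n} = S_j - r < S_m$ to contradict the window bound or maximality. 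Your side remark that the induction-on-$r$ route via deleting a cyclic minimum is more delicate to execute cleanly is fair; the ladder-point count is the more robust proof and matches the spirit of how this lemma is usually established.
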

Hence for $r=1$ such a vector  $\mathbf{y}$ corresponds to a unique tree $T(\mathbf{y})$. The index $i_0$ is obtained by letting $k_0$ denote the smallest integer between $1$ and $n$ for which 
\[
\sum_{i=1}^{k_0} y_i = \min_{1 \le k \le n} \sum_{i=1}^{k} y_i,
\] and setting $i_0 = 1$ if $k_0 = n$, and $i_0 = k_0+1$ otherwise. 

\subsection{Non-generic simply generated trees and the big-jump domain}
If $(\xi_i)_{ i \ge 1}$ denote independent copies of $\xi$, then 
\begin{align}
\label{eq:teq}
\mT_n \eqdist T\left( (\xi_i-1)_{1 \le i \le n} \mid \xi_1 + \ldots + \xi_n = n-1 \right).
\end{align}
Suppose that $\Ex{\xi} < 1$ and \eqref{eq:xi} holds.  By results for the big-jump domain in random walk~\cite[Cor. 2.1]{MR2440928} it follows that
\begin{align}
\label{eq:gwt}
\Pr{|\mT|=n} = n^{-1} \Prb{\sum_{i=1}^{n} \xi_i = n-1} \sim \Pr{\xi = \lfloor(n-1)(1- \Ex{\xi})\rfloor} \sim \frac{f(n)}{ (n(1-\Ex{\xi}))^{1+\alpha}}.
\end{align}

Let $v_1, \ldots, v_n$ denote the depth-first-search ordered list of vertices of $\mT_n$, and set $d_i = d_{\mT_n}^+(v_i)-1$. (The depth-first-search order is often also referred to as the lexicographic order due to the usual embedding of plane trees as subtrees of the Ulam--Harris tree.) Let $1 \le j_0 \le n$ denote the smallest index such that the maximum outdegree of $\mT_n$ is attained at the corresponding vertex. It was observed in~\cite[Cor. 2.7]{MR3335012} using results from \cite{MR2775110} (compare with \cite[Thm. 19.34, (iii)]{MR2908619}) that
\begin{align}
\label{eq:pre1}
\lim_{n \to \infty} \sup_{A \subset \cB(\ndR^n)}\left| \Pr{ (d_{j_0}, \ldots, d_n, d_1, \ldots, d_{j_0 -1}) \in A} - \Pr{\mathbf{v}_n \in A} \right| = 0
\end{align}
for the vector
\begin{align*}
\mathbf{v}_n = \left(n - \xi_1 - \ldots - \xi_{n-1}, \xi_1, \ldots, \xi_{n-1}\right) - (1, \ldots, 1).
\end{align*}
Note that $\mathbf{v}_n$ does not have to correspond to a tree, since the first coordinate may be smaller than $-1$. In this case, we set $T(\mathbf{v}_n) = \diamond$ for some symbol $\diamond$ that is not contained in any other set under consideration in this paper. The probability for this event tends to zero as $n$ becomes large. Equation~\eqref{eq:pre1} implies that
\begin{align}
\label{eq:pre2}
\lim_{n \to \infty} \sup_{A \subset \cT_n \cup \{\diamond\}}\left| \Pr{\mT_n \in A} - \Pr{T(\mathbf{v}_n) \in A} \right| = 0
\end{align}
with $\cT_n$ denoting the finite set of all plane trees with $n$ vertices.

\subsection{Limits for the extremal degree of $\mT_n$}

Recall that $(X_t)_{t \ge 0}$ denotes the spectrally positive L\'evy process with Laplace exponent $\Ex{\exp(-\lambda X_t)} = \exp(t\lambda^\theta )$ for $\theta := \min(2,\alpha)$.  It is known that  the density $h$ of $X_1$ is positive,  uniformly continuous, and bounded on $\ndR$ (see ~\cite[Sec. XVII.6]{MR0270403} and \cite{MR1406564}). The classical local limit theorem~\cite[Thm. 4.2.1]{MR0322926} states that if $\xi$ lies in the domain of attraction of a $\theta$-stable law, then there is a slowly varying function $g$ such that the sums $S_n = \xi_1 + \ldots + \xi_n$ satisfy 
\begin{align}
\label{eq:llt}
\lim_{n \to \infty} \sup_{\ell \in \ndZ} \left|g(n) n^{1/\theta} \Pr{S_n = \ell} - h\left( \frac{\ell - n\Ex{\xi}}{g(n) n^{1/\theta}} \right) \right| = 0.
\end{align}
It was shown in~\cite[Thm. 1.10]{MR2946438} that the function $g$ may be chosen to satisfy Equation~\eqref{eq:defofg}. If assumption~\eqref{eq:xi} is satisfied, then  Equation~\eqref{eq:pre2} implies
\begin{align}
\label{eq:mdeg}
\Delta(\mT_n) \atv n - \xi_1 - \ldots - \xi_{n-1}\end{align}
This may be used (see \cite[Thm. 1]{MR3335012}) to deduce a central limit theorem
\begin{align}
\label{eq:mclt}
\frac{(1 - \Ex{\xi})n - \Delta(\mT_n)}{g(n)n^{1/\theta}} \convdis X_1.
\end{align}
Compare also with \cite[Thm. 19.34]{MR2908619}. We may strengthen \eqref{eq:mclt} to  a local limit theorem. This does not follow directly from~\eqref{eq:mdeg}, as we would require knowledge on the speed with which the total variation distance tends to zero. 
\begin{lemma}
	\label{le:maxllt}
	It holds that
	\[
	\Pr{\Delta(\mT_n) = \ell} = \frac{1}{g(n)n^{1/\theta}}\left(h\left(\frac{ (1-\Ex{\xi})n - \ell}{g(n)n^{1/\theta}}   \right) + o(1)\right)
	\]
	uniformly for all integers $\ell$.
\end{lemma}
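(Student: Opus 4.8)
The plan is to reduce the assertion to a conditional local estimate for a random walk in its big-jump regime, and then quote the local limit theorems already recorded. By Lemma~\ref{le:cyc} and~\eqref{eq:teq} the tree $\mT_n$ has the law of $T((\xi_i-1)_{1\le i\le n})$ conditioned on $\xi_1+\dots+\xi_n=n-1$; since a cyclic shift of the coordinate vector does not alter the multiset of outdegrees, $\Delta(\mT_n)$ is distributed as $\max_{1\le i\le n}\xi_i$ under the conditioning $S_n:=\xi_1+\dots+\xi_n=n-1$, so that
\[
	\Pr{\Delta(\mT_n)=\ell}=\frac{\Pr{\max_{1\le i\le n}\xi_i=\ell,\ S_n=n-1}}{\Pr{S_n=n-1}},
\]
with the denominator controlled by~\eqref{eq:gwt}. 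Write $a_n:=g(n)n^{1/\theta}$. Applying~\eqref{eq:llt} to a sum $S_{n-1}$ of $n-1$ independent copies of $\xi$ — and using $g(n-1)(n-1)^{1/\theta}\sim a_n$ together with the uniform continuity of $h$ to absorb the $O(1)$ shift of the argument — one gets, uniformly in $\ell$, that $\Pr{S_{n-1}=n-1-\ell}=\tfrac1{a_n}(h(\tfrac{(1-\Ex{\xi})n-\ell}{a_n})+o(1))$, which is exactly the right-hand side of the lemma. Since $\Pr{S_{n-1}=k}=O(1/a_n)$ uniformly in $k$, it therefore suffices to show that $\Pr{\Delta(\mT_n)=\ell}\sim\Pr{S_{n-1}=n-1-\ell}$ uniformly over a window $|\ell-(1-\Ex{\xi})n|\le Ca_n$, and that both quantities are $o(1/a_n)$ uniformly outside such a window, with a bound vanishing as $C\to\infty$. (This is genuinely weaker than a total-variation estimate and so does not follow from~\eqref{eq:mdeg}.)

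For the numerator I single out the index carrying the maximum. A union bound gives an upper estimate and a Bonferroni inequality a lower one:
\[
	\Pr{\max_i\xi_i=\ell,\ S_n=n-1}=n\Pr{\xi=\ell}\,\Pr{\xi_j\le\ell\ \forall j,\ S_{n-1}=n-1-\ell}+O(n^2\Pr{\xi=\ell}^2\,\textstyle\sup_k\Pr{S_{n-2}=k}).
\]
Inside the window, $\sup_k\Pr{S_{n-2}=k}=O(1/a_n)$ by~\eqref{eq:llt}, so the Bonferroni term divided by the main term is of order $n\Pr{\xi=\ell}\to0$, and the probability that a further increment exceeds $\ell$ while $S_{n-1}=n-1-\ell$ is of relative order $n\Pr{\xi>\ell}\to0$, so the truncation $\xi_j\le\ell$ may be dropped. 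One is left with $n\Pr{\xi=\ell}\,\Pr{S_{n-1}=n-1-\ell}$; and since~\eqref{eq:gwt} gives $\Pr{S_n=n-1}\sim n\Pr{\xi=\lfloor(n-1)(1-\Ex{\xi})\rfloor}$ while $m\mapsto\Pr{\xi=m}$ is regularly varying, the ratio $n\Pr{\xi=\ell}/\Pr{S_n=n-1}$ tends to $1$ uniformly over the window. Hence $\Pr{\Delta(\mT_n)=\ell}\sim\Pr{S_{n-1}=n-1-\ell}$ uniformly there, as required.

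It remains to treat $\ell$ outside the window, where one only needs a uniform bound of order $\sup_{|x|\ge C}h(x)$ for the right-hand side and a matching negligible bound for $\Pr{\Delta(\mT_n)=\ell}$; splitting into three regimes: if $\ell>(1-\Ex{\xi})n+Ca_n$ the non-maximal increments must sum below their mean by at least $\approx Ca_n$, which by~\eqref{eq:llt} — supplemented by a Cram\'er bound for a macroscopic deficit, legitimate since $\Ex{e^{-s\xi}}<\infty$ — is negligible; if $\ell$ is of order $n$ but $(1-\Ex{\xi})n-\ell$ is macroscopic, the upper bound $n\Pr{\xi=\ell}\,\Pr{S_{n-1}=n-1-\ell}/\Pr{S_n=n-1}$ together with the big-jump asymptotics $\Pr{S_{n-1}=n-1-\ell}\sim n\Pr{\xi=\lfloor(n-1)(1-\Ex{\xi})-\ell\rfloor}$ and~\eqref{eq:gwt} gives $o(1/a_n)$ uniformly; and if $\ell=o(n)$, the conditioning $\max_i\xi_i\le\ell$ forces $S_n$ above its mean by $\approx(1-\Ex{\xi})n$ with increments bounded by $\ell$, so a Bennett-type inequality makes $\Pr{\max_i\xi_i\le\ell,\ S_n=n-1}$ super-polynomially small. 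The principal difficulty is precisely this last part: securing the error estimates uniformly across all scales of $\ell$, and in particular quantifying for intermediate $\ell$ the tension between the truncation constraint $\max_i\xi_i\le\ell$ and the single-big-jump structure of the conditioning $S_n=n-1$. The analysis inside the window, by contrast, is a routine combination of~\eqref{eq:llt} and~\eqref{eq:gwt}.
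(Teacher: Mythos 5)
Your proposal follows the paper's proof of Lemma~\ref{le:maxllt} essentially step by step: write $\Pr{\Delta(\mT_n)=\ell}=\Pr{\max_i\xi_i=\ell,\,S_n=n-1}/\Pr{S_n=n-1}$, isolate the maximal increment by a union/Bonferroni bound, drop the truncation $\max_j\xi_j\le\ell$ inside a central window using the local limit theorem~\eqref{eq:llt} together with the fact that $n\Pr{\xi=\ell}/\Pr{S_n=n-1}\to1$ there (regular variation plus~\eqref{eq:gwt}), and then check negligibility outside the window. This is exactly the paper's route. Two small expository remarks. First, your case split below the window (``$\ell\asymp n$ with macroscopic deficit'' and ``$\ell=o(n)$'') does not explicitly cover $\ell\asymp n$ with deficit between $Ca_n$ and $o(n)$; the same LLT estimate you invoke above the window handles it, and the paper subsumes it in Equation~\eqref{eq:lstep}, whose prefactor $((1-\Ex{\xi})n/\ell)^{1+\alpha}$ is uniformly bounded for $\ell\ge\epsilon n$, so this is a presentation omission rather than a real gap. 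Second, the paper does not rely on a single concentration bound for all $\ell=o(n)$: it applies the exponential estimate of~\cite[Lem.~2.1]{MR2440928} only down to $\epsilon_1 n/\log n$, with $\epsilon_1$ chosen so that the resulting polynomial exponent exceeds $\alpha+1/\theta$, and then switches to the big-jump asymptotics for $\epsilon_1 n/\log n\le\ell\le\epsilon n$; your Bennett-type argument does furnish a superpolynomial exponent on all of $\ell=o(n)$, so both treatments are valid, with the paper's cutoff making the uniformity bookkeeping more explicit.
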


\begin{proof}
	By Equation~\eqref{eq:teq} we know that $\Delta(\mT_n)$ is distributed like the maximum jump of the random walk $S_n$ conditioned to arrive at $n-1$. Hence
	\begin{align}
	\label{eq:mmd}
	\Pr{\Delta(\mT_n) = \ell} = \frac{\Pr{\max(\xi_1, \ldots, \xi_n) = \ell, S_n = n-1}}{\Pr{S_n = n-1 }}.
	\end{align}
	By \cite[Cor. 2.1]{MR2440928} it holds that
	\begin{align}
	\label{eq:nnd}
	\Pr{S_n = n-1 } \sim n \Pr{\xi =  \lfloor n(1-\Ex{\xi})\rfloor} \sim f(n) n^{-\alpha} (1 - \Ex{\xi})^{-\alpha-1}
	\end{align}
	It follows from Equations~\eqref{eq:mmd}, \eqref{eq:nnd} and the exponential bounds \cite[Lem. 2.1]{MR2440928} (applied to the centred random walk $S_n - n \Ex{\xi}$) that there is a constant $C>0$ such that
	\begin{align}
	\label{eq:lowpass}
	\Pr{\Delta(\mT_n) \le cg(n) n^{1/\theta}} \le C \exp\left( \frac{-n (1-\Ex{\xi})}{cg(n) n^{1/\theta}}\right) n^{\alpha} / f(n)
	\end{align}
	for all $c \ge 1$. Hence there is a constant $\epsilon_1>0$ such that it suffices to verify that Lemma~\ref{le:maxllt} holds uniformly for all $\ell \ge \epsilon_1 n/\log n$.
	
	Throughout the following we only consider values $\ell$ with $ \epsilon_1 n/\log n \le \ell \le n$. By Equations~\eqref{eq:mmd} and \eqref{eq:nnd} it follows that $g(n)n^{1/\theta} \Pr{\Delta(\mT_n) = \ell}$ equals
	\begin{multline}
	\label{eq:theexpr44}
	(1 + o(1))\frac{g(n)n^{1/\theta}}{ (1 - \Ex{\xi})^{-\alpha-1}f(n) n^{-\alpha}} \\\sum_{k \ge 1} \binom{n}{k} \Pr{\xi=\ell}^k \Prb{\max_{1 \le i \le n-k}\xi_i < \ell, S_{n-k} = n-1-k\ell}.
	\end{multline}
	Our next step is to discard all summands except for the first. Note that $S_{n-k} \ge 0$ implies that all summands with $k>n/\ell$ are equal to zero. Hence
	\begin{multline}
	\label{eq:boundme}
	\frac{g(n)n^{1/\theta}}{ f(n) n^{-\alpha}} \sum_{k \ge 2} \binom{n}{k} \Pr{\xi=\ell}^k \Prb{\max_{1 \le i \le n-k}\xi_i < \ell, S_{n-k} = n-1-k\ell} \\
	\le \frac{f(\ell)}{f(n)} \left(\frac{\ell}{n}\right)^{-\alpha-1} \sum_{2 \le k \le n/\ell} (n \Pr{\xi=\ell})^{k-1} \Pr{ S_{n-k} = n-1-k\ell} g(n) n^{1/\theta}.
	\end{multline}
	Note that $n/\ell \le \epsilon_1^{-1} \log n$ and hence $n \sim n-k$ uniformly for all summands. Since $h$ is bounded, it follows from the local limit theorem~\eqref{eq:llt} that $\Pr{ S_{n-k} = n-1-k\ell} g(n) n^{1/\theta}$ remains bounded uniformly for all $2 \le k \le n/\ell$ and $\ell \ge \epsilon_1 n/\log n$. 
	Hence the expression in~\eqref{eq:boundme} admits an upper bound of the form
	\begin{align}
	O(1)\frac{f(\ell)}{f(n)} (\log n)^{\alpha+1} \sum_{2 \le k \le n/\ell} (n \Pr{\xi=\ell})^{k-1}.
	\end{align}
	Moreover,  $n \Pr{\xi=\ell} = O(f(\ell)n^{-\alpha} (\log n)^{1 + \alpha})$ holds uniformly as well. Hence, using the Potter bounds,  the expression in~\eqref{eq:boundme} may be further bounded by
	\begin{align}
	\label{eq:Abound}
	\frac{f(\ell)}{f(n)}  (\log n)^{\alpha+1} O(f(\ell) n^{-\alpha} (\log n)^{1 + \alpha}) = o(1).
	\end{align}
	This verifies that $g(n)n^{1/\theta} \Pr{\Delta(\mT_n) = \ell}$ equals
	\begin{align}
	\label{eq:nstep}
	o(1) + (1+ o(1)) \frac{g(n)n^{1/\theta}}{ (1 - \Ex{\xi})^{-\alpha-1}f(n) n^{-\alpha}} n \Pr{\xi=\ell} \Prb{\max_{1 \le i \le n-1}\xi_i < \ell, S_{n-1} = n-1-\ell}
	\end{align}
	uniformly for all $\ell$ with $ \epsilon_1 n/\log n \le \ell \le n$.
	
	Let $0<\epsilon<1 - \Ex{\xi}$ be some constant. By \cite[Cor. 2.1]{MR2440928} (applied to the centred sum $S_n - n \Ex{\xi}$) it holds uniformly for all integers $\ell$ with $\epsilon_1 n/\log n \le \ell \le \epsilon n$ that
	\begin{align}
	\Pr{S_{n-1} = n-1-\ell} &\sim n \Pr{\xi = (n-1)(1- \Ex{\xi}) - \ell} \\&= O(n^{-\alpha}f(n)). \nonumber
	\end{align}
	Since $\alpha>1$ implies that $1/\theta < \alpha$, it follows that the expression in \eqref{eq:nstep} tends to zero uniformly for all $\ell$ in the restricted range. Thus it suffices to verify that Lemma~\ref{le:maxllt} holds uniformly for all $\ell$ with $\epsilon n \le \ell \le n$.
	
	Let $\ell \in [\epsilon n, n]$ be given. Our next step will be to get rid of the event $\max_{1 \le i \le n-1}\xi_i < \ell$ in the expression~\eqref{eq:nstep}. To this end, note that $\sup_{k \ge n} \Pr{\xi=k} = O(\Pr{\xi=n})$ implies that
	\begin{align}
	\Prb{\max_{1 \le i \le n-1}\xi_i \ge \ell, S_{n-1} = n-1-\ell} &\le n \sum_{i\ge \ell}\Pr{\xi=i} \Pr{S_{n-1}=n-1-\ell-i} 
	\\&\le O(n) \Pr{\xi=\ell}.\nonumber
	\end{align}
	Also,
	\begin{align}
	\label{eq:expre}
	\frac{n \Pr{\xi=\ell}}{(1 - \Ex{\xi})^{-\alpha-1}f(n) n^{-\alpha}} \sim \left( \frac{(1 - \Ex{\xi})n}{\ell} \right)^{1 + \alpha}
	\end{align}
	remains bounded for $\ell \ge \epsilon n$. Using again $1/\theta < \alpha$, this implies that the result of substituting $\max_{1 \le i \le n-1}\xi_i < \ell$ by $\max_{1 \le i \le n-1}\xi_i \ge \ell$ in expression~\eqref{eq:nstep} tends to zero. This shows that
	\begin{align}
	\label{eq:newstep}
	g(n)n^{1/\theta} \Pr{\Delta(\mT_n) = \ell} = o(1) + 
	\left( \frac{(1 - \Ex{\xi})n}{(1+o(1))\ell} \right)^{1 + \alpha}\Prb{ S_{n-1} = n-1-\ell}g(n)n^{1/\theta}
	\end{align}
	holds uniformly.
	
	The local limit theorem~\eqref{eq:llt} tells us that
	\begin{align}
	\Prb{ S_{n-1} = n-1-\ell}g(n)n^{1/\theta} = h\left( \frac{\ell - n(1 - \Ex{\xi})}{g(n) n^{1/\theta}} \right) + o(1).
	\end{align}
	Using that the function $h$ and the expression in~\eqref{eq:expre} are bounded, it follows from~\eqref{eq:newstep} that
	\begin{align}
	\label{eq:lstep}
	g(n)n^{1/\theta} \Pr{\Delta(\mT_n) = \ell} = o(1) + 
	\left( \frac{(1 - \Ex{\xi})n}{\ell} \right)^{1 + \alpha}h\left( \frac{\ell - n(1 - \Ex{\xi})}{g(n) n^{1/\theta}} \right).
	\end{align}
	Let $\epsilon_2 >0$ be small enough such that $1/\theta + \epsilon_2 < 1$. It holds that
	\begin{align}
	\sup_{\ell' \notin n(1 - \Ex{\xi}) \pm n^{1/\theta + \epsilon_2}, \ell' \ge \epsilon n} h\left( \frac{\ell' - n(1 - \Ex{\xi})}{g(n) n^{1/\theta}} \right) \to 0.
	\end{align}
	Consequently, it remains to verify that Lemma~\ref{le:maxllt} holds uniformly for $\ell \in n(1 - \Ex{\xi}) \pm n^{1/\theta + \epsilon_2}$. For $\ell$ ranging in this interval, Equation~\eqref{eq:lstep} yields
	\begin{align}
	g(n)n^{1/\theta} \Pr{\Delta(\mT_n) = \ell} = o(1) + 
	h\left( \frac{\ell - n(1 - \Ex{\xi})}{g(n) n^{1/\theta}} \right).
	\end{align}
	This completes the proof.
\end{proof}

For future use we remark on some deviation bounds:
\begin{proposition}
	\label{pro:largedev}
	\begin{enumerate}
		\item For any  $s>0$ we may select  $\epsilon>0$ small enough such that
		\begin{align}
		\label{eq:deva}
		\Prb{\Delta(\mT_n) \le \epsilon \frac{n}{\log n}} =  O(n^{-s}).
		\end{align}
		\item We may select $\epsilon>0$ small enough so that there exists a $\delta>0$ with
		\begin{align}
		\label{eq:devb}
		g(n)n^{1/\theta} \Prb{\Delta(\mT_n) \le \epsilon  n } = O(n^{-\delta})
		\end{align}
		as $n \to \infty$.
	\end{enumerate}
\end{proposition}

\begin{proof}[Proof of Proposition~\ref{pro:largedev}]
	Inequality~\eqref{eq:deva} follows directly from~\eqref{eq:lowpass}.  In order to check~\eqref{eq:devb}, it suffices to shows such a bound for~$\Prb{\epsilon_1 \frac{n}{\log n} \le \Delta(\mT_n) \le \epsilon n}$ for some  $\epsilon_1>0$ chosen sufficiently small according so that~\eqref{eq:deva} holds for $s=\alpha - 1/\theta$.
	
	The expression of $g(n)n^{1/\theta} \Pr{\Delta(\mT_n) = \ell}$ in~\eqref{eq:theexpr44} for $\epsilon_1 \frac{n}{\log n} \le \ell \le \epsilon n$ entails that
	\begin{align}
	g(n)n^{1/\theta} \Pr{\Delta(\mT_n) = \ell} \le A + B
	\end{align}
	with $A$ denoting the bound of~\eqref{eq:Abound}
	\begin{align}
	A  = O(n^{-\alpha + o(1)}),
	\end{align}
	and $B$ denoting the bound from~\eqref{eq:nstep}
	\begin{align}
	B &= O(n^{1/\theta  + \alpha + 1 + o(1)})  \Pr{\xi=\ell} \Prb{\max_{1 \le i \le n-1} \xi_i < \ell, S_{n-1} = n-1-\ell} \\
	&= O(n^{1/\theta + o(1)}) \Prb{\max_{1 \le i \le n-1} \xi_i < \ell, S_{n-1} = n-1-\ell}. \nonumber
	\end{align}
	It follows from the Inequality~\cite[(19.129)]{MR2908619} (generalized to admit regularly varying densities instead of asymptotic power laws) that for $\epsilon < \frac{\alpha-1}{\alpha+1}(1 - \Ex{\xi})$
	\begin{align}
	&\Prb{\max_{1 \le i \le n-1} \xi_i < \ell, S_{n-1} = n-1-\ell} \nonumber 
	\\&\qquad = \exp\left(-(\alpha+1) \frac{\log n}{n(1- \Ex{\xi})}( n-1-\ell - n \Ex{\xi} + o(n)) \right) 
	\\&\qquad\le n^{-(\alpha + 1)(1-\epsilon + o(1))}.\nonumber
	\end{align}
	Note that $\alpha>1$ implies that $1/\theta < \alpha$. Taking $\epsilon>0$ sufficiently small and summing over all integers $\ell$ with $\epsilon_1 \frac{n}{\log n} \le \ell \le \epsilon n$ it follows that
	\begin{align}
	\Prb{\epsilon_1 \frac{n}{\log n} \le \Delta(\mT_n) \le \epsilon n} &\le n \left( O(n^{-\alpha + o(1)}) + O(n^{1/\theta -(\alpha + 1)(1-\epsilon) + o(1)}) \right) \\
	& =O(n^{-\delta}) \nonumber
	\end{align}
	for some $\delta>0$.
\end{proof}

\subsection{The asymptotic shape of the random tree $\mT_n$} 

\begin{lemma}
	\label{le:limit}
	\begin{enumerate}
		\item 	Let $(t_n)_{n \ge 1}$ denote a sequence of integers with $t_n \to \infty$ and $t_n = o(n)$. The asymptotic equivalence
		\begin{align}
		\label{eq:lltoshow}
		\Prb{ F_0(T(\mathbf{v}_n)) = T^\bullet, (F_i(T(\mathbf{v}_n)))_{1 \le i \le k} = (T^i)_{1 \le i \le k} } \sim \Pr{\mT^\bullet = T^\bullet} \prod_{i=1}^k \Pr{\mT = T^i} 
		\end{align}	
		holds uniformly for all $k \ge 1$, all marked plane trees $T^\bullet \in \cT^\bullet$ and all ordered forests $(T^i)_{1 \le i \le k}$ of plane trees with a total number of vertices \begin{align}\label{eq:as0}
		|T^\bullet| + \sum_{i=1}^k |T^i| \le n - t_n.
		\end{align}
		\item 
		For any sequence of integers $(t_n)_{n \ge 1}$  with $t_n \to \infty$ and $t_n = o(n)$ 
		\begin{align}
		\label{eq:first}
		\left(F_0(\mT_n), (F_i(\mT_n))_{1 \le i \le \Delta(\mT_n) - t_n}  \right) \atv \left(\mT^\bullet, (\mT^i)_{1 \le i \le \Delta_{\langle n \rangle}- t_n}\right)
		\end{align}
		with
		\begin{align}
		\Delta_{\langle n \rangle} := \sup\left\{ d \ge 1 \,\,\bigg\rvert\,\, |\mT^\bullet| +  \sum_{i=1}^d |\mT^i| \le n\right\} < n.
		\end{align}
		The remaining $t_n$ fringe subtrees satisfy with high probability 
		\begin{align}
		\label{eq:second}
		\sum_{i = \Delta(\mT_n) - t_n }^{\Delta(\mT_n)} |F_i(\mT_n)| < \frac{2t_n}{1 - \Ex{\xi}}.
		\end{align}
	\end{enumerate}
\end{lemma}

\begin{proof}[Proof of Lemma~\ref{le:limit}]	 
	We start by verifying the equivalence~\eqref{eq:lltoshow}. Recall that in Section~\ref{sec:boxes} we discussed how a plane tree with $m \ge 1$ vertices corresponds to a sequence  $(x_i)_{1 \le i \le m}$ with $\sum_{i=1}^m x_i = -1$ and $\sum_{i=1}^\ell x_i \ge 0$ for all $\ell < m$. An ordered forest of plane trees corresponds to concatenations of such sequences. There is a unique way to cut $(\xi_1-1,  \ldots, \xi_{n-1}-1)$ into initial segments $\mathbf{x}_1, \ldots, \mathbf{x}_r$, each corresponding to a tree, and a single tail segment $\mathbf{y} = (y_i)_{1 \le i \le d}$ with $\sum_{i=1}^j y_i \ge 0$ for all $1 \le j \le d$. (For example, $\mathbf{x}_1$ corresponds to the tree $F_1(T(\mathbf{v}_n))$.) The segment $\mathbf{y}$ corresponds to the initial segment of the depth-first-search ordered list of vertex outdegrees of $F_0(T(\mathbf{v}_n))$ obtained by stopping right before visiting the lexicographically first vertex with maximum outdegree. Hence it encodes the outdegrees of the spine vertices (except for the marked vertex), and all vertices that lie to the left of the spine. It also encodes the precise location of the marked vertex. The sum $R:= \sum_{i=1}^d y_i$ tells us the quantity of direct offspring of spine vertices (except for the marked vertex) of $F_0(T(\mathbf{v}_n))$ that lie to the right of the spine. Hence $\mathbf{x}_1, \ldots, \mathbf{x}_{r-R}$ correspond to the fringe-subtrees dangling from the marked vertex in $T(\mathbf{v}_n)$, and $\mathbf{x}_{r-R+1}, \ldots, \mathbf{x}_r$ correspond to the fringe subtrees dangling from spine vertices (except for the marked vertex) to the right of the spine. Compare with the example in Figure~\ref{fi:example}.

	\begin{figure}[h]
		\centering
		\centering
		\includegraphics[width=1.0\textwidth]{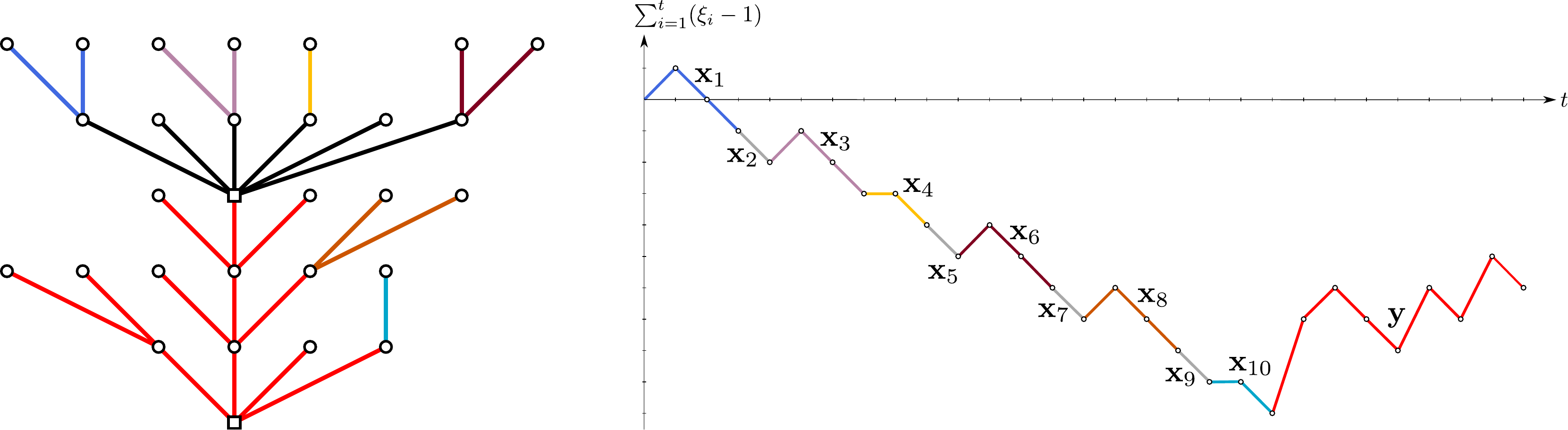}
		\caption{Decomposition of $(\xi_1-1, \ldots, \xi_{n-1} - 1)$.}
		\label{fi:example}
	\end{figure}

	So in order for the tail segment $\mathbf{x}_{r-R+1}, \ldots, \mathbf{x}_r, \mathbf{y}$ to encode the tree $T^\bullet$ it must hold that the concatenation of $\mathbf{y},-1,\mathbf{x}_r, \mathbf{x}_{r-1}, \ldots, \mathbf{x}_{r-R+1}$ is equal to the depth-first-search ordered list of outdegrees of $T^\bullet$. In order for $\mathbf{x}_1, \ldots, \mathbf{x}_k$ to encode $(T^i)_{1 \le i \le k}$ it must hold that $\mathbf{x}_i$ encodes $T^i$ for all $1 \le i \le k$. The only requirement for the middle segment $(\xi_{1+\sum_{i=1}^k |T^i| }, \ldots, \xi_{n - |T^\bullet|})$ is that it must correspond to a forest. Note that the middle segment has $s_n := n - |T^\bullet| - \sum_{i=1}^k |T^i|$ list entries and $s_n \ge t_n$ by assumption.
	Using Equation~\eqref{eq:tbullet} it follows that the probability
	\begin{multline}
	\label{eq:bigp}
	\Prb{ F_0(T(\mathbf{v}_n)) = T^\bullet, F_i(T(\mathbf{v}_n)) = T^i\text{ for } 1 \le i \le k} \\ =
	(1 - \Ex{\xi})^{-1} \Pr{\mT^\bullet = T^\bullet} \left(\prod_{i=1}^k \Pr{\mT = T^i}\right) \\\Pr{ (\xi_1-1, \ldots, \xi_{s_n}-1) \text{ corresponds to a forest} }.
	\end{multline}
	For any integer $s \ge 1$ the probability $\Pr{ (\xi_1-1, \ldots, \xi_{s}-1) \text{ corresponds to a forest} }$ is equal to the probability that $s$ is an arrival time for i.i.d. interarrival times distributed like $|\mT|$. As $\Ex{|\mT|} = (1 - \Ex{\xi})^{-1}$, it follows by the renewal theorem\footnote{The author thanks an anonymous referee for pointing out this elegant application of the renewal theorem to obtain Equation~\eqref{eq:renewal}.} that
	\begin{align}
	\label{eq:renewal}
	\lim_{s \to \infty} \Pr{ (\xi_1-1, \ldots, \xi_{s}-1) \text{ corresponds to a forest} } = 1 - \Ex{\xi}.
	\end{align}
	This completes the verification of~\eqref{eq:lltoshow}.

	Our next  step is to verify~\eqref{eq:first}.
	Recall that by Equation~\eqref{eq:pre2} the random tree $\mT_n$ has a total variational distance from the tree $T(\mathbf{v}_n)$ that tends to zero as $n$ becomes large. This allows us to work with  $T(\mathbf{v}_n)$ instead of~$\mT_n$.
	
	It follows from~\eqref{eq:gwt} and the classical local limit theorem that there is a slowly varying function $g_{\mT}$ such that
	\begin{align}
	\label{eq:llt42}
	\lim_{n \to \infty} \sup_{\ell \in \ndZ} \left|g_{\mT}(n) n^{1/\theta} \Prb{\sum_{i=1}^n|\mT^i| = \ell} - h\left( \frac{\ell - n\Ex{\xi}}{g_{\mT}(n) n^{1/\theta}} \right) \right| = 0.
	\end{align}
	As shown by Kortchemski~\cite[Lem. 2.10]{MR3335012}, the function $g_{\mT}$ may be chosen to satisfy for all $n \ge 1$
	\begin{align}
	\label{eq:gtilde}
	g_{\mT}(n) = \frac{g(n)}{(1 - \Ex{\xi})^{1 + 1/\theta}}.
	\end{align}
	
	Suppose that $(r_n)_{n \ge 1}$ is a sequence of positive integers satisfying  $r_n / g(n)n^{1/\theta} \to \infty$ and $r_n = o(n)$. As $\Delta(\mT_n) = (1- \Ex{\xi})n + O_p(g(n)n^{1/\theta})$, it follows that
	there is a sequence of integers $(t_n')_{n \ge 1}$  satisfying $t_n' \to \infty$ and $t_n' = o(n)$ such that $k_n := \lfloor (1-\Ex{\xi})n -r_n\rfloor$ satisfies
	\[
	|\mT^\bullet| + \sum_{i=1}^{k_n} |\mT^i| \le n - t_n'
	\]
	with probability tending to $1$ as $n\to \infty$.  Using Equation~\eqref{eq:lltoshow} (for $(t_n')_{n \ge 1}$), it follows that 	
	\begin{align}
	\label{eq:prel}
	\left(F_0(\mT_n), (F_i(\mT_n))_{1 \le i \le (1 - \Ex{\xi})n - r_n}  \right) \atv \left(\mT^\bullet, (\mT^i)_{1 \le i \le (1 - \Ex{\xi})n - r_n}\right).
	\end{align}


	Given $\epsilon>0$, it follows from Equation~\eqref{eq:prel} and a time-reversal argument that we may choose a constant $M>0$ large enough so that
	\begin{align}
	\label{eq:doh}
	\Prb{\left| \sum_{i = \Delta(T(\mathbf{v}_n)) - t_n }^{\Delta(T(\mathbf{v}_n))} |F_i(T(\mathbf{v}_n))| - \frac{t_n}{1 - \Ex{\xi}} \right| < M g(t_n) t_n^{1/\theta}} > 1 - \epsilon
	\end{align}
	for all large enough $n$. 
	
	Now, let $T^\bullet \in \cT^\bullet$ and let $(T^i)_{1 \le i \le k}$ be a sequence of plane trees such that $s_n = n - |T^\bullet| - \sum_{i=1}^k |T^i|$ satisfies
	\begin{align}
	\label{eq:assu1}
	\left|s_n -  \frac{t_n}{1 - \Ex{\xi}} \right| < M g(t_n) t_n^{1/\theta}.
	\end{align}
	Then
	\begin{multline}
	\label{eq:sp}
	\Prb{ F_0(T(\mathbf{v}_n)) = T^\bullet, (F_i(T(\mathbf{v}_n)))_{1 \le i \le \Delta(T(\mathbf{v}_n)) - t_n} 
		= (T^i)_{1 \le i \le k}} \\= \Prb{ F_0(T(\mathbf{v}_n)) = T^\bullet, (F_i(T(\mathbf{v}_n)))_{1 \le i \le k} = (T^i)_{1 \le i \le k}, \Delta(T(\mathbf{v}_n)) = k + t_n}.
	\end{multline}
	Arguing analogously as for Equation~\eqref{eq:bigp}, we obtain that the probability in ~\eqref{eq:sp} is given by
	\begin{multline}
	(1 - \Ex{\xi})^{-1} \Pr{\mT^\bullet = T^\bullet} \left(\prod_{i=1}^k \Pr{\mT = T^i}\right) \\\Pr{ (\xi_1-1, \ldots, \xi_{s_n}-1) \text{ corresponds to a forest with $t_n$ trees} }.
	\end{multline}
	Using Lemma~\ref{le:cyc}, Assumption~\eqref{eq:assu1}, and the local limit theorem~\eqref{eq:llt}, we obtain 
	\begin{align*}
	\Pr{ (\xi_1-1, \ldots, \xi_{s_n}-1) &\text{ corresponds to a forest with $t_n$ trees} } \\&= \Prb{ \sum_{1 \le i \le s_n} (\xi_i-1) =-t_n, \sum_{1 \le i \le j} (\xi_i-1) >-t_n \text{ for all $j < s_n$}} 
	\\&= \frac{t_n}{s_n} \Prb{ \sum_{1 \le i \le s_n} (\xi_i-1) = -t_n}
	\\&= \frac{1 - \Ex{\xi}}{g(s_n)s_n^{1/\theta}} \left( o(1) + h\left(\frac{t_n -s_n(1 - \Ex{\xi})}{g(s_n)s_n^{1/\theta}} \right)\right).
	\end{align*}
	The $o(1)$ term in this expression is uniform in $(T^\bullet, (T^i)_{1 \le i \le k})$.  Assumption~\eqref{eq:assu1} also entails that the value of the function $h$  in this expression is bounded away from zero. Summing up, we obtain
	\begin{multline}
	\label{eq:sp2}
	\Prb{ F_0(T(\mathbf{v}_n)) = T^\bullet, (F_i(T(\mathbf{v}_n)))_{1 \le i \le \Delta(T(\mathbf{v}_n)) - t_n} 
		= (T^i)_{1 \le i \le k}} \\= \Pr{\mT^\bullet = T^\bullet} \left(\prod_{i=1}^k \Pr{\mT = T^i}\right) \frac{1 }{g(s_n)s_n^{1/\theta}} \left( o(1) + h\left(\frac{t_n -s_n(1 - \Ex{\xi})}{g(s_n)s_n^{1/\theta}} \right)\right).
	\end{multline}
	On the other hand,
	\begin{multline}
	\Prb{\left(\mT^\bullet, (\mT^i)_{1 \le i \le \Delta_{\langle n \rangle}- t_n}\right) = (T^\bullet, (T^i)_{1 \le i \le k}) } 
	\\= \Pr{\mT^\bullet = T^\bullet} \left(\prod_{i=1}^k \Pr{\mT = T^i}\right) \Prb{ \sum_{i=1}^{t_n} |\mT^i| \le s_n <  \sum_{i=1}^{t_n+1} |\mT^i|}.
	\end{multline}
	Using~\eqref{eq:llt42},~\eqref{eq:gtilde},~\eqref{eq:assu1} and the fact that the function $h$ is uniformly continuous and bounded, we obtain
	\begin{align*}
	\Prb{ \sum_{i=1}^{t_n} |\mT^i| \le s_n <  \sum_{i=1}^{t_n+1} |\mT^i|} &= o\left(\frac{1}{g_{\mT}(t_n) t_n^{1/\theta}}\right) + \sum_{\ell = 0}^{\log t_n} \Prb{\sum_{i=1}^{t_n} |\mT^i| = s_n-\ell}\Pr{|\mT| > \ell } 
	\\&= \frac{1}{(1 - \Ex{\xi})g_{\mT}(t_n) t_n^{1/\theta}}\left( o(1) + h\left(\frac{t_n/(1- \Ex{\xi}) -s_n}{g_{\mT}(t_n)t_n^{1/\theta}} \right) \right) 
	\\&= \frac{1 }{g(s_n)s_n^{1/\theta}} \left( o(1) + h\left(\frac{t_n -s_n(1 - \Ex{\xi})}{g(s_n)s_n^{1/\theta}} \right)\right).
	\end{align*}
	Assumption~\eqref{eq:assu1} ensures that the $h$-term in this expression is bounded away from zero, uniformly  in $(T^\bullet, (T^i)_{1 \le i \le k})$. Summing up, it follows that
	\begin{multline}
	\Prb{ F_0(T(\mathbf{v}_n)) = T^\bullet, (F_i(T(\mathbf{v}_n)))_{1 \le i \le \Delta(T(\mathbf{v}_n)) - t_n} 	= (T^i)_{1 \le i \le k}} \\ = (1 + o(1)) 	\Prb{\left(\mT^\bullet, (\mT^i)_{1 \le i \le \Delta_{\langle n \rangle}- t_n}\right) = (T^\bullet, (T^i)_{1 \le i \le k}) },
	\end{multline}
	with a uniform $o(1)$ term. Since the constant $\epsilon$ in \eqref{eq:doh} (implicit in   Assumption~\eqref{eq:assu1}) may be chosen to be arbitrarily small, this verifies~\eqref{eq:first}.
	
	Finally, Equation~\eqref{eq:second} follows readily by Equation~\eqref{eq:prel}, a time-reversal argument, and Markov's inequality. 
\end{proof}

\section{The case $0 \in \Omega$}
\label{sec:cond}

We reduce the case of a general $\Omega$ containing $0$ to the special case $\Omega=\ndN_0$ via a combinatorial transformation. This construction  goes back to Ehrenborg and M\'endez~\cite{MR1284403} and is also known in the probabilistic literature, see Abraham and Delmas~\cite{MR3164755,MR3227065}, Minami~\cite{MR2135161}, and Rizzolo~\cite{MR3335013}. Further studies of related conditionings of Galton--Watson trees may be found in~\cite{MR2946438,MR2976559,MR3245291}.

Throughout this chapter we assume that $\Omega$ is a proper subset of $\ndN_0$ such that $0 \in \Omega$ and  either $\Omega$ or its complement $\Omega^{c} := \ndN_0 \setminus \Omega$ is finite. To each finite (planted) plane tree $T$ we may assign its weight $\omega(T) = \Pr{\mT = T}$. We let $L_\Omega(T)$ denote the number of vertices in $T$ whose outdegree lies in $\Omega$. The generating function \begin{align}\label{eq:ca}A(z) = \sum_T \omega(T) z^{L_\Omega(T)}\end{align} with the index $T$ ranging over all finite plane trees represents the combinatorial class $A$ of plane trees weighted by $\omega$ and indexed according to the number of vertices with outdegree in $\Omega$. \footnote{A (weighted) \emph{combinatorial class} consists of a countable set $S$ and a weight function $\gamma: S \to \ndR_{\ge 0}$. The class may be \emph{indexed} by a \emph{size function} $s: S \to \ndN_0$ and the corresponding \emph{generating series} may be formed by $\sum_{n \ge 0} \left(\sum_{m \in s^{-1}(\{n\})} \gamma(m)\right)z^n$ if all its coefficients are finite. } We set $\omega_k = \Pr{\xi=k}$ and for any subset $M \subset \ndN_0$ we set
\begin{align}
\label{eq:restrictions}
\phi_M(z) = \sum_{k \in M} \omega_k z^k.
\end{align}
Decomposing with respect to the outdegree of the root vertex readily yields
\begin{align}
\label{eq:decroot}
A(z) = z\phi_{\Omega}(A(z)) + \phi_{\Omega^c}(A(z)).
\end{align}

Since $0$ lies in $\Omega$  we may write
\begin{align}
\phi_{\Omega^c}(z) = z \phi_{\Omega^c}^*(z)
\end{align}
for some power series $ \phi_{\Omega^c}^*(z)$.  Hence Equation~\eqref{eq:decroot} becomes
\begin{align}
A(z) = z\phi_{\Omega}(A(z)) + A(z)\phi^*_{\Omega^c}(A(z)).
\end{align}
We may interpret this equation as follows. If the root vertex has outdegree in $\Omega$, then we have to account for it by a factor $z$ and attach the roots of a weighted forest  $\phi_\Omega(A(z))$. This accounts for the first summand. The second corresponds to the case where the outdegree of the root does not lie in $\Omega$. Here we take a root-vertex, attach to it as left-most offspring the root of a tree (counted by $A(z)$) and then add the root of a weighted forest $\phi^*_{\Omega^c}(A(z))$ as siblings to the right. If we are in the second case, then we may recurse this case-distinction at the left-most offspring of the root. In this way, we descend along the left-most offspring until we encounter for the first time a vertex with outdegree in $\Omega$. In this way we form an ordered list of $\phi^*_{\Omega^c}(A(z))$-forests, yielding
\begin{align}
\label{eq:mend}
A(z) = z\phi_{\Omega}(A(z))\frac{1}{1 - \phi^*_{\Omega^c}(A(z))}.
\end{align}
In other words,
\begin{align}
\label{eq:dec1}
A(z) = z\tilde{\phi}(A(z)) \qquad \text{with} \qquad  \tilde{\phi}(z) = \frac{\phi_\Omega(z)}{1 - \phi_{\Omega^c}^*(z)}.
\end{align}
In combinatorial language, decomposition~\eqref{eq:dec1} identifies the class $A$ as the class of $\tilde{\phi}$-enriched plane trees. We refer the reader to \cite{2016arXiv161202580S} and references given therein for details on the enriched trees viewpoint on random discrete structures.

We let $\tilde{\xi}$ denote a random non-negative integer with distribution given by the probability generating series $\tilde{\phi}$. We let $\tilde{\mT}$ denote a $\tilde{\xi}$-Galton--Watson tree and let $\tilde{\mT}_n = (\tilde{\mT} \mid |\tilde{\mT}| = n)$ denote the result of conditioning it to have $n$ vertices. For each $k \ge 0$ let $\mathfrak{B}_k$ denote the set of all vectors $(y, x_1, \ldots, x_\ell)$ with $\ell \ge 0$, $y \in \Omega$, $x_1, \ldots, x_\ell \in \Omega^c -1$, and $y + \sum_{i=1}^\ell x_i = k$. We let the weight of such a vector be given by
\begin{align}
\label{eq:weight}
\left([z^y]\phi_\Omega(z)\right) \prod_{i=1}^\ell [z^{x_i}] \phi_{\Omega^c}^*(z).
\end{align}
For each vertex $v \in \tilde{\mT}_n$ we independently select a vector $\beta_n(v) \in \mathfrak{B}_{d^+_{\tilde{\mT}_n}(v)}$ at random with probability \emph{proportional} to its weight. The pair $(\tilde{\mT}_n, \beta_n)$ is a $\tilde{\phi}$-enriched plane tree with $n$ vertices that by the decomposition \eqref{eq:dec1} corresponds to a plane tree that has precisely $n$ vertices with outdegree in the set $\Omega$. The correspondence goes by blowing up each vertex $v \in \tilde{\mT}_n$ into an ancestry line according to $\beta_n(v)$ as illustrated in Figure~\ref{fi:blowup}.

\begin{figure}[h]
	\centering
	\centering
	\includegraphics[width=0.5\textwidth]{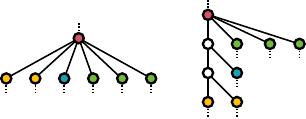}
	\caption{Blowing up a vertex $v$ (red) into an ancestry line segment for the case $\beta_n(v) = (y,x_1,x_2)$ with  $y=2$ (yellow), $x_1=1$ (blue), and $x_2=3$ (green).}
	\label{fi:blowup}
\end{figure}  

The blow-up of the random enriched plane tree $(\tilde{\mT}_n, \beta_n)$ is distributed like the random tree~$\mT_n^\Omega$. This may be verified  directly as by Rizzolo~\cite{MR3335013} or deduced from a general sampling principle~\cite[Lem. 6.1]{2016arXiv161202580S}. Note that $\Ex{\xi}<1$ implies that 
\begin{align}
\label{eq:f1}
\Ex{\tilde{\xi}} = \tilde{\phi}'(1) =  \frac{\phi_\Omega'(1) + (\phi_{\Omega^c}^*)'(1) }{1-\phi_{\Omega^c}(1)} = \frac{\phi'(1) - \phi_{\Omega^c}(1)}{1 - \phi_{\Omega^c}(1)} < 1
\end{align}
and
\begin{align}
\label{eq:ex}
1 - \Ex{\tilde{\xi}} = \frac{1 - \Ex{\xi}}{\Pr{\xi \in \Omega}}.
\end{align}
We assumed that either $\Omega$ or $\Omega^c$ is finite.  Using Equation~\eqref{eq:xi} together with $\phi_{\Omega^c}^*(1) = \phi_{\Omega^c}(1)< 1$ and {\cite[Thm. 4.8, 4.9, 4.30]{MR3097424}} it follows that
\begin{align}
\label{eq:f0}
[z^n] \tilde{\phi}(z) \sim \frac{1}{\Pr{\xi \in \Omega}} [z^n] \phi(z).
\end{align}
That is, the random variable $\tilde{\xi}$ has a regular varying density. Moreover, $\tilde{\xi}$ has  a finite variance if and only if $\xi$ has finite variance. In this case, we may use Equation~\eqref{eq:dec1} and a computer algebra system to compute
\begin{align}
\label{eq:fvar}
\Va{\tilde{\xi}} = \frac{\Ex{\xi^2}-1}{\Pr{\xi \in \Omega}} + \frac{(1- \Ex{\xi})(1 - \Ex{\xi} + 2 \Ex{\xi, \xi \in \Omega})}{\Pr{\xi \in \Omega}^2}.
\end{align}
We will require this expression for the variance later on when computing the slowly varying function $g_{\Omega}(n)$ that appears in Theorem~\ref{te:main1}. Note that Equations~\eqref{eq:gwt},~\eqref{eq:f0}, and~\eqref{eq:ex} entail
\begin{align}
\label{eq:lomegat1}
\Pr{L_\Omega(\mT) = n} &= \Pr{|\mT| = n} \\
&\sim \frac{f(n)}{\Pr{\xi \in \Omega}}\frac{1}{(n(1- \Ex{\tilde{\xi}}))^{1+\alpha}} \nonumber \\
& =\frac{f(n) \Pr{\xi \in \Omega}^\alpha}{(1 - \Ex{\xi})^{1+\alpha}} n^{-1-\alpha}. \nonumber
\end{align}

\begin{lemma}
	\label{le:capsule}
	Let $(Y^k, X_1^k, \ldots, X_{L_k}^k)$ be drawn from $\mathfrak{B}_k$ with probability proportional to the weights defined in \eqref{eq:weight}. We form the sequence
	\[
	(Y^k, X_1^k, \ldots, *, \ldots, X_{L_k}^k)
	\]
	by replacing the largest coefficient in the sequence $(Y^k, X_1^k, \ldots, X_{L_k}^k)$ with a $*$-placeholder. Let $L$, $X$, and $Y$ be  random independent integers with  distributions given by 
	\begin{align*}
	\Ex{z^L} = \frac{1 - \phi_{\Omega^c}^*(1)}{1 - z\phi_{\Omega^c}^*(1)}, \qquad \Ex{z^X} = \frac{\phi^*_{\Omega^c}(z)}{\phi^*_{\Omega^c}(1)}, \qquad \text{and} \qquad \Ex{z^Y} = \frac{\phi_{\Omega}(z)}{\phi_{\Omega}(1)}.
	\end{align*}
	Let $(X_i)_{i \ge 1}$ and $(X_i')_{i \ge 1}$ be independent copies of $X$, and let $L'$ be an independent copy of~$L$.  
	\begin{enumerate}[\qquad a)]
		\item 	If $\Omega^c$ is finite, then 
		\begin{align}
		\label{eq:lim1}
		(Y^k, X_1^k, \ldots, *, \ldots, X_{L_k}^k) \convdis (*, X_1, \ldots, X_L)
		\end{align}
		as $k$ tends to infinity.
		\item If $\Omega$ is finite, then
		\begin{align}
		\label{eq:lim2}
		(Y^k, X_1^k, \ldots, *, \ldots, X_{L_k}^k) \convdis (Y, X_1', \ldots, X_{L'}', *, X_1, \ldots, X_{L})
		\end{align}
		as $k$ tends to infinity.
		\item There are constants $\tilde{k}, C,c>0$  such that for all $k \ge \tilde{k}$ and $x \ge 0$ it holds that
		\begin{multline}
		\label{eq:bo}
		\Pr{\max(Y^{k+x}, X_1^{k+x}+1, \ldots, X_{L_{k+x}}^{k+x}+1)=k} \le \\C \frac{\Pr{\xi= k}\exp(-\frac{cx}{k})(\one_{\Pr{\xi=x}=0} + \Pr{\xi=x})}{\Pr{\xi = k+x}}.
		\end{multline}
	\end{enumerate}
\end{lemma}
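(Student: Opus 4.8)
The plan is to turn part~(c) into a coefficient estimate for a truncated power series and then prove that estimate by balancing the one‑big‑jump behaviour against an exponential tilt. Write $\omega_n=\Pr{\xi=n}$, $p=\Pr{\xi\in\Omega^c}$, and let $\psi_k(z)=\sum_{w\in\Omega^c,\,w\le k}\omega_w z^{w-1}$ be the truncation of $\phi^*_{\Omega^c}$ to degrees $<k$. The weighted distribution on $\mathfrak{A}_{k+x}$ is normalised by $\sum_{v\in\mathfrak{A}_{k+x}}\mathrm{wt}(v)=[z^{k+x}]\tilde\phi(z)=\Pr{\tilde\xi=k+x}$, which by~\eqref{eq:f0} is $\sim c_\Omega\omega_{k+x}$ and hence at least $\tfrac12 c_\Omega\omega_{k+x}>0$ once $k$ exceeds some $\tilde k$. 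Thus it suffices to show that
\[
S_k(x):=\!\!\!\sum_{\substack{v=(y,x_1,\dots,x_\ell)\in\mathfrak{A}_{k+x}\\ \max(y,\,x_1+1,\,\dots,\,x_\ell+1)=k}}\!\!\!\mathrm{wt}(v)\;\le\;C\,\omega_k\,e^{-cx/k}\bigl(\one_{\omega_x=0}+\omega_x\bigr)\qquad(k\ge\tilde k,\ x\ge0),
\]
because dividing by $\Pr{\tilde\xi=k+x}$ then gives~\eqref{eq:bo}.

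To bound $S_k(x)$ I would single out one blown‑up vertex realising the outdegree $k$ and delete it, over‑counting the configurations whose maximum is attained several times (harmless for an upper bound). For $k\ge\tilde k$ exactly one of $\Omega,\Omega^c$ is finite, so $k\in\Omega$ in the first case and $k\in\Omega^c$ in the second, and the weighted description of $\mathfrak{A}$ in~\eqref{eq:weight} yields
\[
S_k(x)\;\le\;\omega_k\,[z^{x}]\frac{1}{1-\psi_k(z)}\,\one_{k\in\Omega}\;+\;\omega_k\,[z^{x+1}]\frac{\phi_\Omega(z)}{\bigl(1-\psi_k(z)\bigr)^{2}}\,\one_{k\in\Omega^c},
\]
the squared denominator recording the position of the removed vertex in the ancestry line of Figure~\ref{fi:blowup}. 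When $\Omega^c$ is finite, $\psi_k=\phi^*_{\Omega^c}$ is a fixed polynomial with $\phi^*_{\Omega^c}(1)=p<1$, so $1/(1-\psi_k)$ is analytic on a fixed disc of radius $\rho_0>1$ and $[z^{x}]\frac1{1-\psi_k(z)}\le C\rho_1^{-x}$ for fixed $\rho_1\in(1,\rho_0)$; since $\rho_1^{-x}=e^{-(\log\rho_1)x}\le e^{-(\log\rho_1)x/k}$ and $e^{-(\log\rho_1)x/2}$ is eventually dominated by $\omega_x$ (bounded $x$ absorbed into $C$ via $\one_{\omega_x=0}+\omega_x>0$), this case is complete.

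It remains to estimate $U_k(s):=[z^{s}]\bigl(1-\psi_k(z)\bigr)^{-2}$, since the second summand above equals $\sum_{y\in\Omega}\omega_y\,U_k(x+1-y)$ and the passage to the bound for $S_k(x)$ costs only constants (using $\omega_{s-y}\asymp\omega_s$ for $y\le\max\Omega$ and $s$ large). I would bound $U_k(s)$ by the minimum of two estimates. First, $U_k(s)\le[z^{s}]\bigl(1-\phi^*_{\Omega^c}(z)\bigr)^{-2}=O(\omega_s)$ for $s$ large: the coefficients of the defective renewal series $(1-\phi^*_{\Omega^c})^{-1}$ are $\asymp\omega_s$ by a one‑big‑jump argument for regularly varying, summable step distributions (cf.~\cite{MR2440928}, or singularity analysis~\cite{MR3097424}), and a self‑convolution of a summable regularly varying sequence is again $\asymp\omega_s$. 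Second, with $R_k:=1+\tfrac{\alpha}{2}\cdot\tfrac{\log k}{k}$, splitting the defining sum of $\psi_k(R_k)$ at $w\asymp k/\log^2 k$ shows $\psi_k(R_k)\to p<1$, so $R_k$ lies inside the disc of convergence of $1/(1-\psi_k)$ and, for $k\ge\tilde k$, $U_k(s)\le\tfrac{2}{(1-p)^2}R_k^{-s}$. The desired bound for $U_k$ then follows from
\[
\min\!\bigl(\omega_s,\,R_k^{-s}\bigr)\;\le\;C\,e^{-cs/k}\,\omega_s\qquad(s\ge s_0,\ k\ge\tilde k),
\]
which I would verify by comparing the two arguments of the minimum: where $R_k^{-s}\ge\omega_s$ one has $s\log R_k\le-\log\omega_s=(1+\alpha)\log s-\log f(s)$, which forces $s=O(k)$ and hence $e^{cs/k}=O(1)$; where $R_k^{-s}<\omega_s$ the map $s\mapsto\omega_s R_k^{s}e^{-cs/k}$ has logarithmic derivative tending to $\log R_k-c/k>0$, so it is increasing already from scale $s\sim k/\log k$ on and is therefore bounded below on the relevant range by its value $e^{-\Theta(1)}$ at the crossover $s_\ast=\Theta(k)$ where $\omega_{s_\ast}R_k^{s_\ast}=1$. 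Finitely many small $s$, and the bookkeeping around the indicator $\one_{\Pr{\xi=x}=0}$, get absorbed into $C$ and $\tilde k$.

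The one genuine obstacle is the estimate of the previous paragraph, and within it the calibration of the tilting radius $R_k$: it must sit as far from $1$ as $1+\Theta(\tfrac{\log k}{k})$ so that $R_k^{-s}$ beats the one‑big‑jump term $\omega_s$ by the required factor $k^{-\Theta(s/k)}\le e^{-cs/k}$ uniformly over $s\gtrsim k$, yet stay close enough to $1$ for $1/(1-\psi_k(R_k))$ to remain bounded; the standing assumptions on $\xi$ (a regularly varying density of index $-(1+\alpha)$ with $\alpha>1$, whence also~\eqref{eq:f0}) are precisely what make the two requirements compatible, by pinning the crossover scale $s_\ast$ at order $k$. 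Everything else — the combinatorial splitting, the finite‑support case, and propagating the $U_k$‑estimate back through $\sum_{y\in\Omega}\omega_y U_k(x+1-y)$ and the normalisation — I expect to be routine.
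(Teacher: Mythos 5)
Your proposal addresses only part~(c) of the lemma and says nothing about parts~(a) and~(b). The paper dispatches those in one sentence each (a) is called a ``probabilistic version'' of the enumerative asymptotics~\eqref{eq:f0}, and (b) is attributed to a Gibbs-partition giant-component result), so the omission is not fatal, but a complete solution would at least sketch why the conditional law of the small components decouples and converges; you should say this explicitly rather than leave it implicit.

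For part~(c) your route is genuinely different from the paper's and, as far as I can check, correct. The paper works probabilistically with the conditional law $\Pr{\max(\cdot) = k \mid X_1 + \cdots + X_L = k+x}$: it singles out the coordinate equal to $k$, observes that $\max \le k$ and sum $= x$ force $L - 1 \ge x/k$, and then harvests the factor $e^{-cx/k}$ from the geometric tail of $L$ combined with the uniform tilt bound $\Pr{X_1+\cdots+X_{\ell-1}=x} \le c(\epsilon)(1+\epsilon)^{\ell}\Pr{X=x}$ of~\cite[Thm.~4.11]{MR3097424} and the convolution asymptotics of~\cite[Thm.~4.30]{MR3097424}. You instead reformulate everything as a coefficient estimate $S_k(x) \le C\,\omega_k e^{-cx/k}(\one_{\omega_x=0}+\omega_x)$ and, in the case $|\Omega|<\infty$, control $U_k(s)=[z^{s}](1-\psi_k(z))^{-2}$ by the minimum of a one-big-jump bound $O(\omega_s)$ and a $k$-dependent saddle/tilt bound $O(R_k^{-s})$ at radius $R_k = 1 + \tfrac{\alpha}{2}\tfrac{\log k}{k}$, with $\psi_k(R_k)\to p<1$; the elementary inequality $\min(\omega_s,R_k^{-s}) \le C e^{-cs/k}\omega_s$ then delivers the same decay. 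The underlying mechanism (the number of blocks must be at least of order $x/k$ and their count decays geometrically) is the same in both proofs, but your calibration of $R_k$ makes the crossover scale $s_{*}\asymp k$ explicit in a way the paper's argument keeps implicit. In the case $|\Omega^{c}|<\infty$ your reduction to the polynomial $\psi_k=\phi^{*}_{\Omega^{c}}$ and a fixed dominant singularity $\rho_0>1$ is in the same spirit as the paper's appeal to finite exponential moments, just phrased analytically. Two places you should tighten when writing this out: (i) the bound $[z^{s}](1-\phi^{*}_{\Omega^{c}})^{-2}=O(\omega_s)$ holds only for $s$ beyond the (finite) range where $s+1\notin\Omega^{c}$ or $\omega_{s+1}=0$, which is exactly where the $\one_{\omega_x=0}$ term must absorb the slack; and (ii) in the finite-$\Omega^{c}$ case the constant $c$ must be chosen small relative to $\log\rho_1$ uniformly over $k\ge\tilde k$ so that $\rho_1^{-x}e^{cx/k}$ is still exponentially small in $x$. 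Both are routine.
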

\begin{proof}
	Claim a) is a  probabilistic version of the enumerative result~\eqref{eq:f0} and follows by standard arguments. Claim b) is the probabilistic version of the enumerative formula~\eqref{eq:f0} and may be justified using a general result for the asymptotic behaviour of random Gibbs partitions that exhibit a giant component~\cite[Thm. 3.1]{doi:10.1002/rsa.20771}.
	
	Claim c):  Suppose that $|\Omega| < \infty$. In this case  it holds by \cite[Thm. 4.30]{MR3097424} that
	\[
	\Pr{X_1 + \ldots  X_L = k} \sim \Ex{L} \Pr{X=k} 
	\]
	as $k$ becomes large. It follows that there are constants $C_2, k_0>0$, such that for all $k\ge k_0$ and  $x \ge 0$
	\begin{align*}
	&\Pr{\max(X_1, \ldots, X_L) = k \mid X_1 + \ldots + X_L = k+x } \\
	&\le \frac{\Pr{X=k}}{\Pr{X_1 + \ldots  +X_L = k+x}} \\&\quad\,\, \sum_{\ell\ge 1} \ell \Pr{L=\ell}  \Pr{X_1 + \ldots + X_{\ell -1} = x, \max(X_1, \ldots, X_{\ell -1}) \le k}  \\
	&\le C_2 \frac{\Pr{X= k}}{\Pr{X = k+x}}\sum_{\ell \ge 1 + \frac{x}{k} } \ell \Pr{L=\ell}\Pr{X_1 + \ldots + X_{\ell -1} = x}
	\end{align*}	
	Applying the bound \cite[Thm. 4.11]{MR3097424} yields that for any $\epsilon>0$ there are constants $c(\epsilon), \ell_0 >0$ such that for all $\ell \ge \ell_0$ \[\Pr{X_1 + \ldots + X_{\ell -1} = x} \le c(\epsilon)(1+ \epsilon)^\ell \Pr{X= x}.
	\]  Using that $L$ has finite exponential moments it follows that there are constants $c_1, C_3>0$ such that 
	\begin{align*}
	\Pr{\max(X_1, \ldots, X_L) = k \mid X_1 + \ldots + X_L = k+x } \le C_3 \frac{\Pr{X= k}\Pr{X=x} \exp(-c_1\frac{x}{k})}{\Pr{X = k+x}}.
	\end{align*}
	Since we are in the case $|\Omega|<\infty$ the random variable $Y$ has a deterministic upper bound, and it follows that there are positive constants $C_4, k_1>0$ with 
	\begin{align}
	\label{eq:bo2}
	\Pr{\max(Y^{k+x}, X_1^{k+x}+1, \ldots, X_{L_{k+x}}^{k+x}+1)=k} \le C_4 \frac{\Pr{\xi= k}\Pr{\xi=x} \exp(- c_1\frac{x}{k})}{\Pr{\xi = k+x}}
	\end{align}
	for all $k \ge k_1$ and $x \ge 0$.
	
	In the case $|\Omega^c|< \infty$ the $X_i$ are deterministically bounded and the sum $L + X_1 + \ldots + X_L$ has finite exponential moments. Hence, as $k \to \infty$
	\[
	\Pr{Y + X_1 + \ldots + X_L = k} \sim \Pr{Y=k}. 
	\]
	This implies that there  are  constants $k_2, C_5>0$ such that for all $k \ge k_2$ and $x \ge 0$
	\begin{align*}
	\Pr{\max(&Y^{k+x}, X_1^{k+x}+1, \ldots, X_{L_{k+x}}^{k+x}+1)=k} \nonumber \\
	\le&\,\, C_5 \Pr{Y=k+x}^{-1} \Pr{\max(Y, X_1 +1, \ldots, X_L + 1) = k, Y + X_1 + \ldots + X_L = k +x} \nonumber \\
	\le&\,\, C_5 \frac{\Pr{Y=k}}{\Pr{Y = k+x}} \Pr{X_1 + \ldots + X_L = x} 
	\end{align*}
	Using that $X_1 + \ldots + X_L$ has finite exponential moments it follows that there are constants $C_6, c_2>0$ such that
	\begin{align}
	\label{eq:bo1}
	\Pr{\max(&Y^{k+x}, X_1^{k+x}+1, \ldots, X_{L_{k+x}}^{k+x}+1)=k} \le C_6 \frac{\Pr{\xi=k} \exp(-c_2 x)}{\Pr{\xi = k+x}}
	\end{align}
	for all $k \ge k_2$ and $x \ge 0$.
	
	Combining the bounds~\eqref{eq:bo2} and \eqref{eq:bo1} yields Claim c).
\end{proof}

\begin{figure}[t]
	\centering
	\centering
	\includegraphics[width=1.0\textwidth]{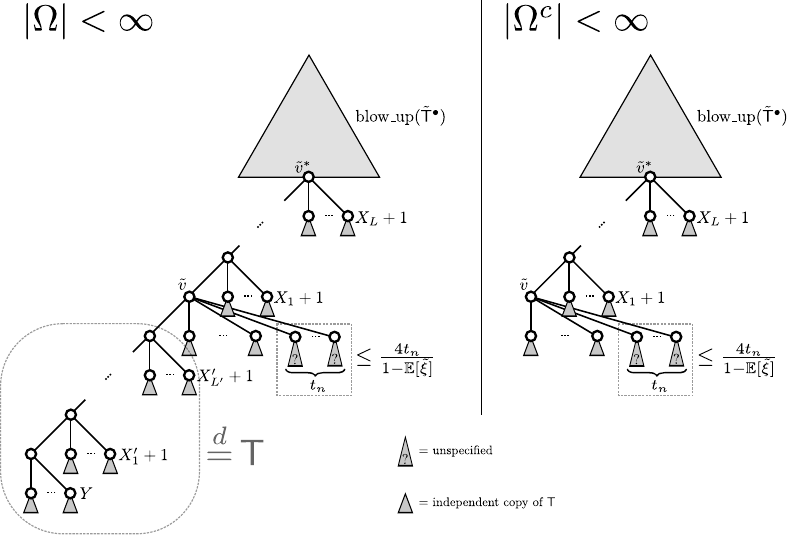}
	\caption{Illustration of the blow up of the tree $\tilde{\mT}_n$ in the two cases.}
	\label{fi:f0}
\end{figure}

\begin{proof}[Proof of Theorem~\ref{te:main2} for the case $0 \in \Omega$]
	By Equations~\eqref{eq:f0} and \eqref{eq:f1} we may apply Lemma~\ref{le:limit} to the tree $\tilde{\mT}_n$. Hence for any sequence of integers $(t_n)_{n \ge 1}$  with $t_n \to \infty$ it holds that
	\begin{align}
	\label{eq:dd}
	\left(F_0(\tilde{\mT}_n), (F_i(\tilde{\mT}_n))_{1 \le i \le \Delta(\tilde{\mT}_n) - t_n}, \one_{\sum_{i = \Delta(\tilde{\mT}_n) - t_n }^{\Delta(\tilde{\mT}_n)} |F_i(\tilde{\mT}_n)| \ge \frac{2t_n}{1 - \Ex{\tilde{\xi}}} } \right) \atv \left(\tilde{\mT}^\bullet, (\tilde{\mT}^i)_{1 \le i \le \tilde{\Delta}_{\langle n \rangle} - t_n},0\right),
	\end{align}
	with $(\tilde{\mT}_i)_{i \ge 1}$ denoting a family of independent $\tilde{\xi}$-Galton--Watson trees, $\tilde{\mT}^\bullet$ the analog of $\mT^\bullet$ that is constructed with $\tilde{\xi}$ instead of $\xi$, and
	\[
	\tilde{\Delta}_{\langle n \rangle} :=  \sup \left\{ d \ge 1 \,\,\bigg\rvert\,\, |\tilde{\mT}^\bullet| +  \sum_{i=1}^d |\tilde{\mT}^i| \le n  \right\}.
	\]

	We start with the case $|\Omega^c|< \infty$. By Lemma~\ref{le:capsule} it follows that the largest outdegree of $\mT_n^\Omega$ is with high probability equal to the giant component $Y^{\Delta(\tilde{\mT}_n)}$ in the blow-up of the  decoration \[
	\beta_n(\tilde{v}^*) = \left(Y^{\Delta(\tilde{\mT}_n)}, X_1^{\Delta(\tilde{\mT}_n)} , \ldots, X_{L_{\Delta(\tilde{\mT}_n)}}^{\Delta(\tilde{\mT}_n)}\right)
	\]
	with $\tilde{v}^*$ the (lexicographically first) vertex with maximum outdegree in~$\tilde{\mT}_n$. The small components admit the limit
	\begin{align*}
	(Y^{\Delta(\tilde{\mT}_n)}, X_1^{\Delta(\tilde{\mT}_n)}, \ldots, *, \ldots, X_{L_{\Delta(\tilde{\mT}_n)}}^{\Delta(\tilde{\mT}_n)}) \convdis (*, X_1, \ldots, X_L)
	\end{align*}

	Given $\Delta(\tilde{\mT}_n)$ the family of fringe subtrees $(F_i(\tilde{\mT}_n))_{1 \le i \le \Delta(\tilde{\mT}_n)}$ is conditionally exchangeable. Hence reordering the fringe subtrees in a suitable way and applying \eqref{eq:dd} yields that simultaneously the fringe subtrees dangling from the vertices belonging to small components in $\beta_n(\tilde{v}^*)$ and the first $Y^{\Delta(\tilde{\mT}_n)}-t_n$ fringe subtrees corresponding to the large component behave like $(\tilde{\mT}^i)_{1 \le i \le \tilde{\Delta}_{\langle n \rangle} - t_n}$. Compare with the right-hand side of Figure~\ref{fi:f0}.
	
	The limit~\eqref{eq:dd}  also tells us that the total number of vertices of the remaining $t_n$ fringe subtrees in $\tilde{\mT}_n$ is with high probability smaller than $2t_n / (1 - \Ex{\tilde{\xi}})$. When blowing up a tree we add additional vertices, but the size of any fringe subtree may at most double. This shows that  the size of the blow ups of the remaining $t_n$ fringe subtrees is with high probability smaller than $4t_n / (1 - \Ex{\tilde{\xi}})$.

	The limit of $F_0(\mT_n^\Omega)$ is determined by  $\tilde{\mT}^\bullet$ together with the small components of $\beta_n(\tilde{v}^*)$ and their fringe subtrees. 	
	Let us make this precise. 
	Note that the blow-up of $\tilde{\mT}$ with canonically chosen random local decorations is by construction distributed like  the $\xi$-Galton--Watson tree $\mT$, and the vertices of $\tilde{\mT}$ correspond bijectively to the vertices of $\mT$ with outdegree in the set $\Omega$. Let $\mS^\bullet$ denote the random marked tree constructed as follows. We start with the  blow-up of the tree $\tilde{\mT}^\bullet$ with canonically chosen random decorations. If $L=0$ we stop. Otherwise we add $X_L+1$ offspring vertices to the marked leaf. All except the first of these offspring vertices become the roots of independent copies of $\mT$. If $L=1$ we declare the first offspring vertex to be the new marked leaf and stop the construction. Otherwise the first offspring vertex becomes father of $X_{L-1} +1$ children. All but the first become roots of independent copies of $\mT$, and we proceed in the same manner until we are finished after $L$ steps in total.

	Using again that the number of vertices with outdegree in $\Omega$ in the blow-up correspond bijectively to the total number of original vertices, we may sum up what we have shown so far by
	\begin{align*}
	\left(F_0(\mT_n^\Omega), (F_i(\mT_n^\Omega))_{1 \le i \le \Delta(\mT_n^\Omega) - t_n}, \one_{\sum_{i = \Delta(\mT_n^\Omega) - t_n }^{\Delta(\mT_n^\Omega)} |F_i(\mT_n^\Omega)| \ge \frac{4t_n}{1 - \Ex{\tilde{\xi}}} } \right) \atv \left(\mS^\bullet, (\mT^i)_{1 \le i \le \Delta_{\langle n \rangle}'- t_n},0\right)
	\end{align*}
	with
	\begin{align*}
	\Delta_{\langle n \rangle}'  :=  \sup \left\{ d \ge 1 \,\,\bigg\rvert\,\, L_\Omega(\mS^\bullet) +  \sum_{i=1}^d L_\Omega(\mT^i) \le n  \right\}.
	\end{align*}
	The distribution of the random marked plane tree $\mS^\bullet$ agrees with the distribution of $\mT^\bullet$. This follows by a slightly tedious but inoffensive calculation from standard properties of size-biased geometric distributions.
	
	It remains to treat the case $|\Omega|< \infty$. This is analogous to the case $|\Omega^c| < \infty$, with the only difference being that we additionally have to take into account the small decorations $X_1', \ldots, X_{L'}'$. That is, we have to check that the circled fringe subtree on the left hand side of Figure~\ref{fi:f0} follows the distribution of the Galton--Watson tree $\mT$. But this is clear, since it is distributed like the  blow up of $\tilde{\mT}$ and hence like $\mT$.
\end{proof}

\begin{proof}[Proof of Theorem~\ref{te:main1} for the case $0 \in \Omega$]
	Equations~\eqref{eq:f0} and \eqref{eq:f1} allow us to apply Lemma~\ref{le:maxllt} to the tree $\tilde{\mT}_n$, yielding that there is a slowly varying function $\tilde{g}$ with
	\begin{align}
	\label{eq:base}
	\Pr{\Delta(\tilde{\mT}_n) = x} = \frac{1}{\tilde{g}(n)n^{1/\theta}}\left(h\left(\frac{ (1-\Ex{\tilde{\xi}})n - x}{\tilde{g}(n)n^{1/\theta}}   \right) + o(1)\right)
	\end{align}
	uniformly in $x \in \ndZ$. We are going   to show that
	\begin{align}
	\label{eq:step0}
	\sup_{1 \le \ell \le n} \left| \tilde{g}(n)n^{1/\theta}\Pr{\Delta(\mT_n^\Omega) =\ell} - h\left(\frac{ (1-\Ex{\tilde{\xi}})n - \ell}{\tilde{g}(n)n^{1/\theta}}   \right) \right| \to 0
	\end{align}
	as $n$ tends to infinity. As $h(t) \to 0$ as $|t| \to \infty$, $t \in \ndR$ this already implies that $\eqref{eq:step0}$ also holds uniformly for $\ell \in \ndZ$.
	
	Our first step in the verification~\eqref{eq:step0} is a lower bound on the maximum degree $\Delta(\mT_n^\Omega)$. By Equation~\eqref{eq:lowpass} it follows that
	\begin{align}
	\label{eq:m1}
	\tilde{g}(n) n^{1/\theta} \Pr{\Delta(\tilde{\mT}_n) \le n / \log^2 n} = o(1).
	\end{align}
	If we let $Z_n$ denote the size of the largest outdegree produced by blowing up the lexicographically first vertex $\tilde{v}$ with maximal outdegree in $\tilde{\mT}_n$, then it follows by \eqref{eq:m1}, \eqref{eq:base} and the fact that $h$ is bounded that
	\begin{align*}
	&\tilde{g}(n) n^{1/\theta} \Pr{Z_n \le n / \log^4 n} \\&= o(1) + \tilde{g}(n) n^{1/\theta}\sum_{n / \log^2 n \le r \le n} \Pr{\Delta(\tilde{\mT}_n) = r, Z_n \le n / \log^3 n} \\ 
	&\le o(1) + O(1) \sum_{n / \log^2 n \le r \le n} \Pr{ Z_n \le n / \log^4 n\mid \Delta(\tilde{\mT}_n) = r} \\
	&=  o(1) + O(1) \sum_{n / \log^2 n \le r \le n}  \Pr{\max(Y^{r}, X_1^{r}+1, \ldots, X_{L_{r}}^{r}+1)\le n / \log^4 n}.
	\end{align*}
	It follows easily by  Equation~\eqref{eq:bo} that this bound tends to zero. This shows that
	\begin{align}
	\tilde{g}(n) n^{1/\theta} \Pr{\Delta(\mT_n^\Omega) \le n / \log^4 n} = o(1).
	\end{align}
	
	Thus, it suffices to verify that ~\eqref{eq:step0} holds with $\ell$ ranging over the set $I_n$ of integers in the interval from $n / \log^4 n$ to $n$ instead. To this end, let $t_n$ be a sequence that tends to infinity and let $\ell \in I_n$ be an integer. Then
	\begin{align}
	\label{eq:master}
	\tilde{g}(n) n^{1/\theta} \Pr{\Delta(\mT_n^\Omega) =\ell} = R_{n,\ell} + S_{n,\ell}
	\end{align}
	with an error term $R_{n, \ell}$ and 
	\begin{align*}
	S_{n,\ell} =  \sum_{0 \le x \le t_n} \tilde{g}(n) n^{1/\theta}\Pr{\Delta(\tilde{\mT}_n) =\ell+x}\Pr{\max(Y^{\ell+x}, X_1^{\ell+x}+1, \ldots, X_{L_{\ell+x}}^{\ell+x}+1)=\ell} 
	\end{align*}
	the product of $\tilde{g}(n) n^{1/\theta}$ with the probability for the event that the largest outdegree in the blow-up of the lexicographically first vertex $\tilde{v}$ with maximal outdegree in $\tilde{\mT}_n$ is equal to $\ell$ and that $\Delta(\tilde{\mT}_n) - \ell \le t_n$. If this event fails but $\Delta(\mT_n^\Omega) =\ell$, then at least one of the following events must take place.
	\begin{enumerate}[\qquad  1)]
		\item  The maximal outdegree in the blow-up of the vertex  $\tilde{v}$ equals $\ell$ but $\Delta(\tilde{\mT}_n) - \ell > t_n$. We let $R_{n, \ell}(1)$ denote the product of $\tilde{g}(n) n^{1/\theta}$ with the probability for this event.
		\item At least two vertices of $\tilde{\mT}_n$ have outdegree at least $\ell$ and the blow-up of one of them produces a vertex with outdegree equal to $\ell$. The product of $\tilde{g}(n) n^{1/\theta}$ with the  probability for this event is denoted by $R_{n,\ell}(2)$.
	\end{enumerate}
	Hence
	\begin{align}
	\label{eq:xo}
	R_{n, \ell} \le R_{n,\ell}(1) + R_{n, \ell}(2).
	\end{align}
	We are going to verify that this bound tends to zero uniformly for all $\ell \in I_n$. Using  Inequality~\eqref{eq:bo} and Equality~\eqref{eq:base} it follows that 
	\begin{align*}
	R_{n,\ell}(1) &\le \sum_{t_n \le x \le n} \tilde{g}(n) n^{1/\theta}\Pr{\Delta(\tilde{\mT}_n) =\ell+x}\Pr{\max(Y^{\ell+x}, X_1^{\ell+x}+1, \ldots, X_{L_{\ell+x}}^{\ell+x}+1)=\ell} \\
	&\le C \sum_{t_n \le x \le n}\left(h\left(\frac{ (1-\Ex{\tilde{\xi}})n - \ell - x}{\tilde{g}(n)n^{1/\theta}}   \right) + o(1)\right)  \frac{\Pr{\xi= \ell}\Pr{\xi=x}}{\Pr{\xi = \ell+x}}\exp\left(-\frac{x}{\ell}\right) \\
	&\le O(1) \sum_{x \ge t_n} \Pr{\xi=x} \frac{f(\ell)}{f(\ell(1+\frac{x}{\ell}))}\left(1 + \frac{x}{\ell}\right)^{1 + \alpha}\exp\left(-\frac{x}{\ell}\right) \\
	&\le O(1)  \Pr{\xi\ge t_n} 
	\end{align*}
	and thus
	\begin{align}
	\label{eq:xo1}
	R_{n, \ell}(1) \to 0
	\end{align}
	uniformly for all $\ell \in I_n$.
	Here we have used that $h$ is bounded, that the $o(1)$ term tends to zero uniformly, and that the Potter bounds for slowly varying functions imply that for any $\epsilon>0$ there is a positive constant $C(\epsilon)$ with
	\begin{align*}
	\frac{f(\ell)}{f(\ell(1+\frac{x}{\ell}))} \le C(\epsilon) \left(1 + \frac{x}{\ell} \right)^\epsilon
	\end{align*}
	for all $\ell, x \ge 1$.
	
	In order to verify that the bound in \eqref{eq:xo} tends to zero it remains to show that $R_{n, \ell}(2)$ tends to zero. Let $(\tilde{\xi}_i)_{i \ge 1}$ be a family of independent copies of the offspring distribution $\tilde{\xi}$ and set $\tilde{S}_k = \tilde{\xi}_1 + \ldots + \tilde{\xi}_k$ for all $k$. Using Lemma~\ref{le:cyc} and Equation~\eqref{eq:teq} it follows that
	\begin{multline*}
	R_{n,\ell}(2) \le \frac{\tilde{g}(n)n^{1/\theta}}{\Pr{\tilde{S}_n = n-1}} n^2 \sum_{x \ge 0} \Pr{\tilde{\xi}=\ell +x}\Pr{\tilde{S}_{n-1} = n-1 - \ell -x, \tilde{\xi}_1 \ge \ell} \\ \Pr{\max(Y^{\ell+x}, X_1^{\ell+x}+1, \ldots, X_{L_{\ell+x}}^{\ell+x}+1)=\ell}.
	\end{multline*}
	By Inequality~\eqref{eq:bo}, the asymptotic expansion~\eqref{eq:f0}, and using \cite[Cor. 2.1]{MR2440928} in a similar way as in \eqref{eq:gwt} it follows that we may bound $R_{n, \ell}(2)$ by 
	\begin{align*} &\frac{O(1)\tilde{g}(n)n^{1/\theta+1}\Pr{\xi = \ell}}{  \Pr{\tilde{\xi} = \lfloor(n-1)(1- \Ex{\tilde{\xi}})\rfloor}} \sum_{x \ge 0} \Pr{\tilde{S}_{n-1} = n-1 - \ell -x, \tilde{\xi}_1 \ge \ell}  (\one_{\Pr{\xi=x}=0} + \Pr{\xi=x}).
	\end{align*}
	By the expansion~\eqref{eq:f0} there  are constants  $x', C'>0$ such that for all $x \ge x'$
	\[
	\one_{\Pr{\xi=x}=0} + \Pr{\xi=x} = \Pr{\xi = x} \le C'\Pr{\tilde{\xi}=x}.
	\]
	Using the local limit theorem~\eqref{eq:llt} (for $\tilde{S}_n$ instead of $S_n$) and the fact that the density $h$ is bounded it follows that
	\begin{align*}
	&R_{n,\ell}(2) \\
	&\quad\le O(1) \tilde{g}(n) n^{1/\theta +1} \left( \Pr{\tilde{S}_n = n-1 - \ell, \tilde{\xi}_1 \ge \ell} +  \Pr{\tilde{S}_{n-1} \in n-1 - \ell - [0,x'], \tilde{\xi}_1 \ge \ell} \right) \\
	&\quad\le O(1) \tilde{g}(n)n^{1/\theta + 1} \sum_{i \ge \ell} \Pr{\tilde{\xi} = i} \frac{O(1)}{\tilde{g}(n)n^{1/\theta}} \\
	&\quad\le O(1) n \Pr{\xi \ge \ell}
	\end{align*}
	and thus
	\begin{align}
	\label{eq:xo2}
	R_{n, \ell}(2) \to 0
	\end{align}
	uniformly for all $\ell \in I_n$. Combining Inequality~\eqref{eq:xo} and the limits \eqref{eq:xo1} and \eqref{eq:xo2} we obtain
	\begin{align}
	\label{eq:rnull}
	R_{n, \ell} \to 0.
	\end{align}
	
	We now turn our attention to $S_{n,\ell}$. 
	By the limits \eqref{eq:lim1} and \eqref{eq:lim2} it follows that there is a probability density $(p_x)_{x \ge 0}$ such that for each constant integer $x \ge 0 $ it holds that
	\begin{align*}
	\lim_{k \to \infty} \Pr{\max(Y^{k+x}, X_1^{k+x}+1, \ldots, X_{L_{k+x}}^{k+x}+1)=k} = p_x.
	\end{align*}
	Consequently, if we choose our sequence $t_n$  such that it tends  to infinity sufficiently slowly, then
	\begin{align}
	\label{eq:doo}
	\sum_{x=0}^{t_n}  \Pr{\max(Y^{\ell+x}, X_1^{\ell+x}+1, \ldots, X_{L_{\ell+x}}^{\ell+x}+1)=\ell} = 1 + o(1)
	\end{align}
	holds uniformly for all $\ell \in I_n$. We may additionally assume that  $t_n = o(\tilde{g}(n) n^{1/\theta})$. As $h$ is uniformly continuous this implies that
	\[
	h\left(\frac{ (1-\Ex{\tilde{\xi}})n - \ell - x}{\tilde{g}(n)n^{1/\theta}}   \right) = h\left(\frac{ (1-\Ex{\tilde{\xi}})n - \ell}{\tilde{g}(n)n^{1/\theta}}   \right) + o(1)
	\]
	holds uniformly for all $\ell \in I_n$ and $0 \le x \le t_n$.  Using the local limit theorem \eqref{eq:base} and Equation~\eqref{eq:doo} it follows that
	\begin{align*}
	S_{n,\ell} &=  \sum_{0 \le x \le t_n} \tilde{g}(n) n^{1/\theta}\Pr{\Delta(\tilde{\mT}_n) =\ell+x}\Pr{\max(Y^{\ell+x}, X_1^{\ell+x}+1, \ldots, X_{L_{\ell+x}}^{\ell+x}+1)=\ell} \\
	&=   \sum_{0 \le x \le t_n} \left(h\left(\frac{ (1-\Ex{\tilde{\xi}})n - \ell}{\tilde{g}(n)n^{1/\theta}}   \right) + o(1)\right)\Pr{\max(Y^{\ell+x}, X_1^{\ell+x}+1, \ldots, X_{L_{\ell+x}}^{\ell+x}+1)=\ell} \\
	&= h\left(\frac{ (1-\Ex{\tilde{\xi}})n - \ell}{\tilde{g}(n)n^{1/\theta}}   \right) + o(1)
	\end{align*}
	with uniform $o(1)$ terms.  By Equation~\eqref{eq:master} and \eqref{eq:rnull} it follows that
	\[
	\tilde{g}(n) n^{1/\theta} \Pr{\Delta(\mT_n^\Omega) =\ell} = h\left(\frac{ (1-\Ex{\tilde{\xi}})n - \ell}{\tilde{g}(n)n^{1/\theta}}   \right) + o(1)
	\]
	holds uniformly for $\ell \in I_n$. This completes the proof.
\end{proof}

We may compute the slowly varying function $g_\Omega(n)$, hence verifying~\eqref{eq:gslowly} for the case $0 \in \Omega$:
\begin{proposition}
	\label{pro:gomega}
	In the case $0 \in \Omega$, we may set
	\begin{align}
	g_\Omega(n) = \begin{cases}
	\sqrt{\frac{\Va{\tilde{\xi}}}{2}}, \quad &\Va{\xi}<\infty \\
	\frac{g(n)}{\Pr{\xi \in \Omega}^{1/\theta}}, \quad &\Va{\xi} = \infty.
	\end{cases}
	\end{align}
	The variance $\Va{\tilde{\xi}}$ was expressed in~\eqref{eq:fvar}:
	\[
	\Va{\tilde{\xi}} =  \frac{\Ex{\xi^2}-1}{\Pr{\xi \in \Omega}} + \frac{(1- \Ex{\xi})(1 - \Ex{\xi} + 2 \Ex{\xi, \xi \in \Omega})}{\Pr{\xi \in \Omega}^2}.
	\]
\end{proposition}
\begin{proof}
	In the previous proof we verified Theorem~\ref{te:main1} for $g_\Omega(n) = \tilde{g}(n)$.
	The function $\tilde{g}(n)$ stems from the local limit theorem~\ref{eq:llt} and Equation~\eqref{eq:defofg}, but for $\tilde{\xi}$ instead of $\xi$. 
	Using~\eqref{eq:defofg} the slowly varying function $\tilde{g}(n)$ may be chosen to satisfy 	$\tilde{g}(n) = \sqrt{\frac{\Va{\tilde{\xi}}}{2}}$ if $\Va{\tilde{\xi}}<\infty$, which is equivalent to $\Va{\xi} <\infty$. Suppose that $\Va{\xi} = \infty$. Using $\Pr{\tilde{\xi}=n} \sim \frac{1}{\Pr{\xi \in \Omega}} \Pr{\xi=n}$ from~\eqref{eq:f0}, it follows that if $1<\theta<2$
	\begin{align}
	\frac{ \inf\left\{ x \ge 0 \mid \Pr{\tilde{\xi} > x} \le \frac{1}{n} \right\}}{ \inf\left\{ x \ge 0 \mid \Pr{\xi > x} \le \frac{1}{n} \right\}} \to \frac{1}{\Pr{\xi \in \Omega}^{1/\theta}}.
	\end{align}
	Hence we set $\tilde{g}(n) = \frac{1}{\Pr{\xi \in \Omega}^{1/\theta}} g(n)$ in the this case. The remaining case $\Va{\xi}=\infty$ and $\theta=2$ follows by similar arguments.
\end{proof}

\begin{remark}
	\label{re:extend1}
	Throughout we assumed that either $\Omega$ or $\Omega^c$ is finite. Accordingly, the maximal outdegree of $\mT_n^\Omega$ either belongs with high probability to $\Omega^c$ or with high probability to $\Omega$. In our proofs we constructed the tree $\mT_n^\Omega$ via a bijection from the decorated tree $(\tilde{\mT}_n, \beta_n)$. The difference between the two cases is reflected by the asymptotic behaviour of the decoration  $\beta_n(\tilde{v}^*)$ of the lexicographically first vertex $\tilde{v}^*$ of $\tilde{\mT}_n$ with maximal outdegree. As ensured by Lemma~\ref{le:capsule}, according to whether $\Omega$ or $\Omega^c$ is finite we will either see a giant ``$\Omega$-component'' in $\beta_n(\tilde{v}^*)$ or a giant ``$\Omega^c$-component''. However, as we saw in the proofs of Theorems~\ref{te:main1} and~\ref{te:main2}, the implications for the tree $\mT_n^\Omega$ are the same in both cases.
	
	If $\Omega$ and $\Omega^c$ are both infinite, then we expect a mixture of these two behaviours to occur, with the end result for the tree $\mT_n^\Omega$ being the same. However, when making this formal, rather unpleasant periodicity issues have to be taken into account. Studying the asymptotic  behaviour of the coefficients $[z^k]\tilde{\phi}(z)$  of the probability generating function $\tilde{\phi}(z) = \phi_\Omega(z) / (1 - \phi_{\Omega^c}(z)/z)$ for $\tilde{\xi}$ requires us to partition the support of $\tilde{\xi}$ into multiple infinite subsets. In order to extend the proofs we would have to show versions of Equation~\eqref{eq:f0} and Lemma~\ref{le:capsule} for restrictions to theses subsets. We would also have to make sure that our main tools still apply to $\tilde{\xi}$, so that the proof of the case $\Omega=\ndN_0$ may be extended to this setting: The large deviation results of~\cite{MR2440928} already apply to $O$-regularly varying probability densities. The results of~\cite{MR2775110} assume $\Delta$-subexponentiality, so we would have to verify that they also apply to this setting.
\end{remark}

\section{The case $0 \notin \Omega$}

\label{sec:0not}

Throughout this section we assume that $\Omega$ is a proper subset of $\ndN_0$ satisfying  $0 \notin \Omega$.  We are going to use the same strategy as in the case $0 \in \Omega$: we generate the random tree $\mT_n^\Omega$ via a blow-up construction of some random decorated tree. Using Gibbs partition methods we will then show that the decoration of the largest degree in this second tree is likely to produce the vertex with largest degree in~$\mT_n^\Omega$. 


\subsection{Decomposition}

We recap the lifeline-tree decomposition by Rizzolo~\cite{MR3335013} that in the special case of plane trees extends the Ehrenborg--M\'endez bijection. 

We consider the generating series $A(z)$ of the class of weighted plane trees indexed by their number of vertices with outdegree in $\Omega$. We denote the restrictions $\phi_M(z) = \sum_{k \in M} \Pr{\xi = k} z^k$ of the generating series $\phi(z) = \Ex{z^\xi}$ to subsets $M \subset \ndN_0$. We set
\begin{align}
p := \Pr{L_{\Omega}(\mT) = 0}.
\end{align}
Our assumption $\Pr{\xi \in \Omega}>0$ ensures that $p<1$. As we are in the case $0 \notin \Omega$, it follows that $p>0$. Hence \begin{align}0<p<1.\end{align} 
Let us consider the subclass $A^*$ of $A$ of finite plane trees with at least one vertex having outdegree in $\Omega$. Its complement $A^{**}(z)$ consists of finite plane trees  whose outdegrees are required to belong to $\Omega^c$. Clearly
\begin{align}
A^{**}(z) = p
\end{align}
and
\begin{align}
\label{eq:d11}
A(z) = A^*(z) + p.
\end{align}
Given a tree from $A^*$, we may consider the path from the root to the lexicographically first vertex with outdegree in $\Omega$ and call it the spine of the tree. It consists of a sequence of vertices with outdegree in $\Omega^c$ and a single vertex with outdegree $\Omega$.  Any of these spine vertices with outdegree in $\Omega^c$ have the property, that their offspring to the left of the spine have only descendants with outdegree in $\Omega^c$. That is, they are roots of fringe subtrees belonging to $A^{**}$. Those to the right of the spine may have descendants of arbitrary kind and are hence roots of trees from $A$. Moreover, the offspring of the unique spine vertex with outdegree in $\Omega$ may also have arbitrary descendants. Hence
\begin{align}
A^*(z) = z \frac{\phi_{\Omega}(A(z))}{1 - \phi^*_{\Omega^c}(A(z)) }
\end{align}
with
\begin{align}
\phi^*_{\Omega^c}(z) := \sum_{k \in \Omega^c} \Pr{\xi=k} \sum_{i=0}^{k-1} p^i z^{k-i-1}.
\end{align}
The sum index $i$ corresponds to the number of vertices that lie to the left of the spine.   As $A(1)=1$ it follows from~\eqref{eq:d11} that $A^*(1) = 1 - p$.  Using~\eqref{eq:d11} it follows that
\begin{align}
\label{eq:dec2}
A^*(z)/(1-p) = z \varphi(A^*(z)/(1-p))
\end{align}
with
\begin{align}
\label{eq:fo}
\varphi(z) = (1-p)^{-1}\frac{\phi_{\Omega}(p + z(1-p))}{1 - \phi^*_{\Omega^c}(p +z(1-p)) }.
\end{align}
In combinatorial language,  decomposition~\eqref{eq:dec2} identifies the class $A^*$ as the class of $ \varphi$-enriched plane trees. That is, any tree from $A^*$ with $n$ vertices having outdegree in $\Omega$ corresponds in a bijective and weight-preserving manner to a plane tree with $n$ vertices where each vertex is endowed with a $\varphi$-decoration whose size matches the outdegree of the vertex. We refer the reader to \cite{2016arXiv161202580S} and references given therein for details on the enriched trees viewpoint on random discrete structures.

\subsection{Sampling procedure}

\label{sec:blowup}

Note that~\eqref{eq:dec2} entails that $\varphi(1) = 1$. Hence it is the probability generating function of some random non-negative integer $\zeta$. 
We let $\mA_n$ denote a $\zeta$-Galton--Watson tree conditioned on having $n$ vertices. Rizzolo~\cite{MR3335013} calls this tree the lifeline tree. The random tree $\mT_n^\Omega$ may be constructed from $\mA_n$ by a randomized blow-up procedure which we are going to describe in this section.

\begin{figure}[h]
	\centering
	\centering
	\includegraphics[width=0.8\textwidth]{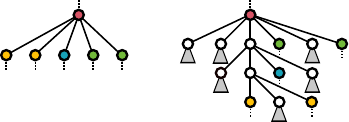}
	\caption{Blowing up a vertex $v$ (red) with $\ell= 2$, $\bm{x}_1 = (2,3)$, $\bm{x}_2 = (1,2)$, $y=3$, $m = \{1,3,4,6,8\}$ into a vertebrate. The grey triangles represent independent copies of $\mT^*$, a $\xi$-Galton--Watson tree conditioned on producing no vertex with outdegree in $\Omega$.}
	\label{fi:blowup2}
\end{figure}  

We let $L$ denote a geometric random variable with distribution given by
\begin{align}
\Ex{z^L} = \frac{1 - \phi_{\Omega^c}^*(1)}{1 - z\phi_{\Omega^c}^*(1)}.
\end{align}
Furthermore, we let $\bm{X} = (X(1), X(2))$ denote a random pair of non-negative integers with probability generating function
\begin{align}
\label{eq:def}
\Ex{z_1^{X(1)}z_2^{X(2)}} = \frac{1}{\phi^*_{\Omega^c}(1)} \sum_{k \in \Omega^c} \Pr{\xi=k} \sum_{i=0}^{k-1} p^i z_1^i  z_2^{k-i-1}.
\end{align}
We let $Y$ denote a random non-negative integer with distribution determined by
\begin{align}
\Ex{z^Y} = \frac{\phi_{\Omega}(z)}{\phi_{\Omega}(1)}.
\end{align}
We assume the three random variables $L$, $\bm{X}$, and $Y$ to be independent. We let $\bm{X}_i = (X_i(1), X_i(2))$ for $i \ge 1$ denote independent copies of $\bm{X}$. Equation~\eqref{eq:fo} entails that
\begin{align}
\label{eq:blowup}
\zeta \eqdist \mathrm{Bin}\left(Y +  \sum_{i=1}^L X_i(2),1-p\right).
\end{align}
Let $M$ denote a subset of $\{1, \ldots, Y + \sum_{i=1}^L X_i(2)\}$ drawn according to a binomial distribution with parameter $1-p$.  Equation~\eqref{eq:dec2} entails that we may generate a $\xi$-Galton--Watson tree $\mA^*$ conditioned on producing at least one vertex with outdegree belonging to $\Omega$ by ``blowing up'' a $\zeta$-Galton--Watson tree $\mA$. The sampling procedure draws for each vertex $v$ of $\mA$ an independent copy $(y, (\bm{x})_{1 \le i \le \ell}, m)$ of $(Y, (\bm{X})_{1 \le i \le L}, M)$ conditioned on
\[
d^+_{\mA}(v) = |M|.
\]
The vertex $v$ is then blown up as illustrated in Figure~\ref{fi:blowup2} into a vertebrate whose spine has length $\ell+1$. With $\bm{x}_i = (x_i(1), x_i(2))$ for each $1 \le i \le \ell$ the $i$th spine vertex receives $x_i(1)$ offspring vertices to the left of the spine, and $x_i(2)$ offspring vertices to the right of the spine. Each offspring to the left becomes the root of an independent copy of $\mT^*$, a $\xi$-Galton--Watson tree conditioned on producing no vertex with outdegree in $\Omega$. Each offspring to the right is coloured grey.  The tip of the spine receives $y$ grey offspring vertices. The set $m$ maybe interpreted as a subset of the  $y + \sum_{i=1}^{\ell} x_i(2)$ grey vertices in a canonical way. Its elements may be identified with the original offspring of $v$ in a canonical way. All remaining grey vertices become roots of independent copies of $\mT^*$.

We refer to the additionally sampled data for each vertex $v$ as its decoration. Of course, this blow-up operation may also be applied to any plane tree having at least one vertex with outdegree in $\Omega$. In particular, if $\mA_n$ denotes the result of conditioning the $\zeta$-Galton--Watson tree $\mA$ on producing $n$ vertices, then the blow-up of $\mA_n$ is distributed like $\mT_n^\Omega$. 

\subsection{Asymptotics of the blow-ups}

Our aim is to apply the case $\Omega=\ndN_0$ to the tree~$\mA_n$, yielding that $\mA_n$ exhibits a  vertex with degree fluctuating around a constant multiple of $n$. As \[\zeta \eqdist \mathrm{Bin}\left(Y +  \sum_{i=1}^L X_i(2),1-p\right)\] follows a binomial distribution with a random number of slots, we need some  estimates on the asymptotic behaviour of such compound random variables when we condition them to be large.
Proposition~\ref{pro:binomial} below ensures that when $ \mathrm{Bin}\left(Y +  \sum_{i=1}^L X_i(2),1-p\right)$ is large, then so is $Y +  \sum_{i=1}^L X_i(2)$, and  we have a local limit theorem available to determine the fluctuation. The question now is what happens when the sum $Y +  \sum_{i=1}^L X_i(2)$ becomes large. As we shall see further down below, there will be a single macroscopic summand accounting for the entire mass minus a stochastically bounded remainder.

We start with a description of the behaviour of a random non-negative integer $Z$ conditioned on the event $\mathrm{Bin}{(Z,q)} = n$ that $Z$ i.i.d. coin flips  yield head precisely $n$ times, assuming that each individual flip shows head with a fixed probability $0<q<1$.
\begin{proposition}
	\label{pro:binomial}
	Let $Z$ be a random non-negative integer such that $\Pr{Z=n}$ varies regularly. Let $0<q<1$ be given.
	It holds uniformly for all $k \in \ndZ$ that
	\begin{align}
	\label{eq:llt111}
	\Pr{Z = k \mid \mathrm{Bin}{(Z,q)} = n} = \frac{1}{\sqrt{n}} \left( o(1) + \frac{q}{\sqrt{2 \pi }\sqrt{1-q} } \exp\left(- \frac{1}{2} \left(\frac{k - n/q}{\sqrt{n}q^{-1}\sqrt{1-q}} \right)^2 \right) \right).
	\end{align}
	There are constants $C,c>0$ such that for all $n \ge 1$ and $\epsilon>0$
	\begin{align}
	\label{eq:devi}
	\Pr{ |Z - n/q| > \epsilon \sqrt{n} \mid \mathrm{Bin}(Z,q)=n} \le C (\exp(-cn) + n \exp(-c \epsilon^2)).
	\end{align}
	Moreover, 
	\begin{align}
	\label{eq:asymp1}
	\Pr{\mathrm{Bin}{(Z,q)} = n} \sim \frac{1}{q}\Pr{Z = \lfloor n/q \rfloor}.
	\end{align}
\end{proposition}
\begin{proof}
	Note that the event $\mathrm{Bin}(Z,q) = n$ entails $Z \ge n$.
	Using the Azuma--Hoeffding inequality, it follows that for any fixed $0<\delta<1$ there are constants $C,c>0$ such that for all integers $n \ge 1$
	\begin{align}
	\Pr{Z \notin (1 \pm \delta)n/q,\mathrm{Bin}(Z,q)= n} \le C\exp(-cn).
	\end{align}
	Using again the Azuma--Hoeffding inequality, it follows that for any $\epsilon>0$
	\begin{multline}
	\label{eq:aa1}
	\Pr{ |Z - n/q|>\epsilon \sqrt{n}, \mathrm{Bin}(Z,q) =n } \le \\ C \exp(-cn) + \sum_{\substack{k \in (1 \pm \delta) \frac{n}{q} \\ |n-qk|>\epsilon q \sqrt{n}}} \Pr{Z=k} 2 \exp\left(- \frac{|n-qk|^2}{2k} \right).
	\end{multline}	
	From this we obtain the crude bound
	\begin{align}
	\Pr{ |Z - n/q|>\epsilon \sqrt{n}, \mathrm{Bin}(Z,q) =n } &\le C \exp(-cn) + 2 \exp\left( -\frac{\epsilon^2 q^3}{ 2(1+ \delta)} \right). 
	\end{align}
	Using the strengthened local  limit theorem from~\cite[p. 79, P10]{MR0388547} and setting $\sigma = \sqrt{q(1-q)}$ and $x = \frac{n-kq}{\sigma \sqrt{k}}$ it follows that
	\begin{multline}
	\label{eq:dohdd}
	\Pr{\mathrm{Bin}(Z,q) = n} = \\ O(1) n^{-\Theta(\log n)} + \sum_{k \in n/q \pm \sqrt{n} \log n} \Pr{Z = k} \frac{1}{\sqrt{k}}\left( o\left(\frac{1}{\max\left(1, x^2\right)}\right)  + \frac{1}{\sqrt{2 \pi}\sigma} \exp\left(- x^2/2\right)\right). 
	\end{multline}
	For $k \in n/q \pm \sqrt{n} \log n$, it holds that $k \sim n/q$ and, since we assumed that the density of $Z$ varies regularly,
	\begin{align}
	\Pr{Z= k} \sim \Pr{Z= \lfloor n/q\rfloor}.
	\end{align}
	Using dominated convergence, it follows that
	\begin{align*}
	&\Pr{\mathrm{Bin}(Z,q) = n}  \\&= \frac{1}{q}\Pr{Z = \lfloor n/q \rfloor}\left( o(1) + \sum_{k \in n/q \pm \sqrt{n}\log n} \frac{1}{\sqrt{2 \pi n} \sigma / q^{3/2}} \exp\left(- \frac{1}{2} \left(\frac{k - n/q}{\sqrt{n}\sigma/q^{3/2}} \right)^2 \right) \right) \\
	&=\frac{1}{q}\Pr{Z = \lfloor n/q \rfloor}(1 + o(1)).
	\end{align*}
	This verifies~\eqref{eq:asymp1}. Combining this with Inequality~\eqref{eq:aa1} we may verify the bound~\eqref{eq:devi} too.
	
	In order to verify~\eqref{eq:llt111}, note that for if $k \notin n/q \pm  \sqrt{n} \log n$ it follows from~\eqref{eq:devi} that 
	\begin{align}
	\label{eq:strength1}
	\Pr{Z = k \mid \mathrm{Bin}{(Z,q)} = n} = O(n^{-\Theta(\log n)}).
	\end{align}
	Hence it suffices to verify~\eqref{eq:llt111} for  $k \in n/q \pm \sqrt{n}  \log n$. Arguing analogously as for~\eqref{eq:dohdd} it follows that uniformly for all $k$ in that interval
	\begin{align}
	\label{eq:strength2}
	&\Pr{Z = k \mid \mathrm{Bin}{(Z,q)} = n} \nonumber \\
	&= \frac{\Pr{Z=k}}{\Pr{\mathrm{Bin}{(Z,q)} = n}} \frac{1}{\sqrt{k}} \left( o\left(\frac{1}{\max\left(1, x^2\right)}\right) + \frac{1}{\sqrt{2 \pi}\sigma} \exp\left(- x^2/2\right) \right). \nonumber \\
	&=		
	\frac{1}{\sqrt{n}} \left( o\left(\frac{1}{\max\left(1, x^2\right)}\right) + \frac{1 + O(\log^2(n) /n)}{\sqrt{2 \pi } \sigma / q^{3/2}} \exp\left(- \frac{1}{2} \left(\frac{k - n/q}{\sqrt{n}\sigma/q^{3/2}} \right)^2 \right) \right) \nonumber \\
	&= \frac{1}{\sqrt{n}} \left( o\left(\frac{1}{\max\left(1, x^2\right)}\right) + \frac{1}{\sqrt{2 \pi } q^{-1}\sqrt{1-q} } \exp\left(- \frac{1}{2} \left(\frac{k - n/q}{\sqrt{n}q^{-1}\sqrt{1-q}} \right)^2 \right) \right). 
	\end{align}
	This verifies~\eqref{eq:llt111}.
\end{proof}

\begin{proposition}
	\label{pro:pro}
	Suppose that $\Omega$ is finite. Then
	\begin{align}
	\label{eq:jj1}
	\Pr{X(2) = n} \sim \frac{1}{(1-p)\phi^*_{\Omega^c}(1)}\Pr{\xi=n}.
	\end{align}
	as $n$ tends to infinity. Moreover,
	\begin{align}
	\label{eq:jj2}
	\left( X(1) \mid X(2) = n \right) \convdis \mathrm{Geom}(p)
	\end{align}
	for a geometrically distributed random variable $\mathrm{Geom}(p)$  that assumes an integer $i \ge 0$ with probability $p^i/(1-p)$. Even more, if $t_n$ denotes a sequence of positive integers satisfying $t_n = o(n)$, then
	\begin{align}
	\label{eq:jj3}
	\Pr{X(1) = i \mid X(2) = n} \sim p^i/(1-p)
	\end{align}
	holds uniformly for all $0 \le i \le t_n$. Finally, there is a  constant $C>0$ such that for all sufficiently large $n$ and all $x \ge 0$ 
	\begin{align}
	\label{eq:jj4}
	\Pr{X(1) \ge x , X(2) = n} \le C p^x \Pr{\xi= n}.
	\end{align}
\end{proposition}
\begin{proof}
	Let $i \ge 0$ be an integer. From~\eqref{eq:def} it follows that
	\begin{align}
	\label{eq:zo1}
	\Pr{X(1) = i, X(2) = n} = \frac{p^i}{\phi^*_{\Omega^c}(1)}  \Pr{\xi=n+i+1} \one_{n+i+1 \in \Omega^c}.
	\end{align}
	Since we assumed that $\Omega$ is finite, it holds that $n+i+1 \in \Omega^c$ for all  sufficiently large $n$, not depending of the value of $i$. Let $(t_n)_{n \ge 1}$ be a  sequence of positive integers  with $t_n = o(n)$ and $t_n \to \infty$. As $\xi$ has a regularly varying density it follows that
	\begin{align}
	\label{eq:doit1114}
	\sup_{x \in \ndZ, |x| \le t_n} \left| \frac{\Pr{\xi= n+x}}{\Pr{\xi=n}}-1\right| \to 0.
	\end{align}
	Hence
	\begin{align}
	\sum_{i=0}^{t_n} \Pr{X(1) = i, X(2) = n} \sim \frac{1}{(1-p)\phi^*_{\Omega^c}(1)}\Pr{\xi=n}.
	\end{align}
	Moreover, the Potter bounds imply that
	\begin{align}
	\label{eq:aaa}
	\sup_{x \in \ndZ, x \ge 0} \left| \frac{\Pr{\xi=n+x}}{\Pr{\xi=n}} \right| = O(1)
	\end{align}
	as $n\to \infty$. Thus
	\begin{align}
	\label{eq:almostthes}
	\frac{1}{\Pr{\xi=n}}\sum_{i > t_n} \Pr{X(1) = i, X(2) = n} = O(p^{t_n}).
	\end{align}
	It follows that
	\begin{align}
	\Pr{X(2)=n} \sim \frac{1}{(1-p)\phi^*_{\Omega^c}(1)}\Pr{\xi=n}.
	\end{align}
	This verifies Equation~\eqref{eq:jj1}. Combining~\eqref{eq:zo1},~\eqref{eq:doit1114} and~\eqref{eq:jj1} implies that
	\begin{align}
	\Pr{X(1)=i \mid X(2) =n} \sim p^i/(1-p)
	\end{align}
	holds uniformly for all $0 \le i \le t_n$.  This verifies~\eqref{eq:jj2} and~\eqref{eq:jj3}. Finally,~\eqref{eq:jj4} follows from~\eqref{eq:zo1} and~\eqref{eq:aaa},  analogously as in the verification of~\eqref{eq:almostthes}.
\end{proof}

Propositions~\ref{pro:binomial} and~\ref{pro:pro} enable us to determine the asymptotic behaviour of $\zeta$:

\begin{lemma}
	\label{le:regvar}
	As $n \to \infty$, 	
	\begin{align}
	\label{eq:step1}
	\Prb{Y +  \sum_{i=1}^L X_i(2) = n} \sim  \frac{\Pr{\xi=n}}{\Pr{\xi \in \Omega}}
	\end{align}
	Consequently, 
	\begin{align}
	\label{eq:zetadensity}
	\Pr{\zeta=n} \sim \frac{(1-p)^\alpha}{\Pr{\xi \in \Omega}} \Pr{\xi = n}.
	\end{align}
\end{lemma}
\begin{proof}
	Suppose that $\Omega^c$ is finite. Then $X(2)$ is bounded. As $L$ is light-tailed, it follows that $\sum_{i=1}^L X_i(2)$ is light-tailed. On the other hand, $Y$ is heavy-tailed.  Using \cite[Lem. 4.9]{MR3097424}, it follows that 
	\begin{align}
	\label{eq:foya1}
	\Prb{Y +  \sum_{i=1}^L X_i(2) = n} &\sim \Pr{Y=n}  \\&\sim \frac{1}{\phi_\Omega(1)}\Pr{\xi=n}. \nonumber
	\end{align}
	Suppose that $\Omega$ is finite. As $L$ is light-tailed, Proposition~\ref{pro:pro} allows us to apply~\cite[Thm. 4.30]{MR3097424}, yielding
	\begin{align}
	\label{eq:foya2}
	\Prb{\sum_{i=1}^L X_i(2) = n} &\sim \Ex{L}\Pr{X(2)=n} \\
	&\sim \frac{\Ex{L}}{(1-p)\phi^*_{\Omega^c}(1)}\Pr{\xi=n} \nonumber \\
	&\sim \frac{1}{(1-p)(1- \phi^*_{\Omega^c}(1))}\Pr{\xi=n}. \nonumber
	\end{align}
	Since $Y$ is now bounded, we may apply \cite[Lem. 4.9]{MR3097424} to obtain
	\begin{align}
	\label{eq:foyaddd}
	\Prb{Y +  \sum_{i=1}^L X_i(2) = n} &\sim \Prb{ \sum_{i=1}^L X_i(2) = n} \\
	&\sim \frac{1}{(1-p)(1- \phi^*_{\Omega^c}(1))}\Pr{\xi=n}. \nonumber
	\end{align}
	As $\varphi(1)=1$, it follows from~\eqref{eq:fo} that
	\begin{align}
	(1-p)(1- \phi_{\Omega^c}^*(1)) = \phi_\Omega(1).
	\end{align}
	Hence~\eqref{eq:foyaddd} simplifies to 
	\begin{align}
	\label{eq:foya3}
	\Prb{Y +  \sum_{i=1}^L X_i(2) = n}  \sim \frac{1}{\phi_{\Omega}(1)}\Pr{\xi=n}. \nonumber
	\end{align}
	This completes the verification of~\eqref{eq:step1}. Using Proposition~\ref{pro:binomial}, it follows that
	\begin{align}
	\Pr{\zeta=n} &\sim \frac{1}{1-p} \Prb{Y + \sum_{i=1}^L X_i(2) = \lfloor n/(1-p)\rfloor} \\
	&\sim \phi_\Omega(1)^{-1}\frac{1}{1-p} \Pr{\xi= \lfloor n/(1-p)\rfloor} \nonumber \\
	&\sim \frac{(1-p)^\alpha}{\phi_\Omega(1)} \Pr{\xi = n}. \nonumber
	\end{align}
	This completes the proof.
\end{proof}

\begin{lemma}
	\label{le:capsule0omega}
	For integers $k \ge 1$ we consider the list of integers
	\[
	(Y^{k}, \bm{X}^{k}_1, \ldots, \bm{X}_{L_k}^{k})  := \left( (Y, \bm{X}_1, \ldots, \bm{X}_L) \,\, \bigg\vert \,\, \mathrm{Bin}\left(Y + \sum_{i=1}^L X_i(2),1-p\right) = k\right).
	\]
	with $\bm{X}_i^{k} = (X_i^{k}(1),X_i^{k}(2))$ for all $1 \le i \le L_k$.
	We form the sequence
	\[
	P^{k} := (Y^{k}, X_1^{k}(1),X_1^{k}(2), \ldots,*,\ldots,X_{L_k}^{k}(1), X_{L_k}^{k}(2))
	\]
	by replacing the largest coefficient of the original list with a $*$-placeholder. Let $L'$ denote an independent copy of $L$ and let $\bm{X}'_i = (X_i'(1), X_i'(2))$, $i \ge 1$, denote independent copies of $\bm{X}$.  
	\begin{enumerate}[\qquad a)]
		\item 	If $\Omega^c$ is finite, then 
		\begin{align}
		\label{eq:limb1}
		P^{k} \convdis (*, X_1(1),X_1(2), \ldots, X_{L}(1), X_{L}(2))
		\end{align}
		as $k$ tends to infinity.
		\item If $\Omega$ is finite, then
		\begin{multline}
		\label{eq:limb2}
		P^{k}\convdis 	(Y, X_1(1),X_1(2), \ldots, X_{L}(1), X_{L}(2), \mathrm{Geom}(p),*, \\X'_1(1),X'_1(2), \ldots, X'_{L'}(1), X'_{L'}(2))
		\end{multline}
		as $k$ tends to infinity.
		\item There are constants $x_0, \tilde{k}, C,c>0$  such that for all $k \ge \tilde{k}$ and $x \in \ndZ$ with $\Pr{\xi = k+x}>0$  it holds that
		\begin{multline}
		\label{eq:bob}
		\mathbb{P}\Bigg(\max(Y,  X_1(1) + X_1(2)+1, \ldots, X_{L}(1) + X_{L}(2)+1)=k \,\,\Bigg\vert\,\, Y + \sum_{i=1}^L X_i(2) = k+x\Bigg) \\\le 
		C \frac{\Pr{\xi= k}}{\Pr{\xi = k+x}}\left( \one_{x\le x_0} \exp(-c|x|) + \one_{x> x_0} \exp\left(-\frac{cx}{k}\right)\Pr{\xi=x}  \right)
		\end{multline}
	\end{enumerate}
\end{lemma}
\begin{proof}
	Let us first consider the vectors 
	$
	(Y^{<k>}, \bm{X}^{<k>}_1, \ldots, \bm{X}_{L_{<k>}}^{<k>})
	$
	and $P^{<k>}$ that are defined analogously, but by conditioning on $Y + \sum_{i=1}^L X_i(2) = k$ instead. Thus $P^k$ is a mixture of $(P^{<i>})_{i \ge 0}$.
	
	\emph{Claim a):} If $\Omega^c$ is finite, then it follows analogously as for~		\eqref{eq:foya1} that
	\begin{align}
	\label{eq:c1}
	P^{<k>} \convdis (*, X_1(1),X_1(2), \ldots, X_{L}(1), X_{L}(2)).
	\end{align}
	Using Proposition~\ref{pro:binomial} it follows that
	\[
	P^{k} \convdis (*, X_1(1),X_1(2), \ldots, X_{L}(1), X_{L}(2)).
	\]
	
	\emph{Claim b)}  Suppose that $\Omega$ is finite. Consider the list \[(Y^{<k>}, X_1^{<k>}(2), \ldots, *, \ldots, X_{L_{<k>}}^{<k>}(2))\] obtained from~\[(Y^{<k>}, X_1^{<k>}(2), \ldots, X_{L_{<k>}}^{<k>}(2))\] by deleting the largest entry by a $*$-placeholder. Using a general result concerning the asymptotic behaviour of random Gibbs partitions that exhibit a giant component~\cite[Thm. 3.1]{doi:10.1002/rsa.20771} it follows that
	\[
	(Y^{<k>}, X_1^{<k>}(2), \ldots, *, \ldots, X_{L_{<k>}}^{<k>}(2)) \convdis (Y, X_1(2), \ldots, X_{L}(2),*,X'_1(2), \ldots, X'_{L'}(2)).
	\]
	Using Proposition~\ref{pro:pro} it follows that
	\begin{align}
	\label{eq:c2}
	P^{<k>} \convdis 	(Y, X_1(1),X_1(2), \ldots, X_{L}(1), X_{L}(2), \mathrm{Geom}(p),*,X'_1(1),X'_1(2), \ldots, X'_{L'}(1), X'_{L'}(2)).
	\end{align}
	Using Proposition~\ref{pro:binomial} we may deduce
	\[
	P^k \convdis 	(Y, X_1(1),X_1(2), \ldots, X_{L}(1), X_{L}(2), \mathrm{Geom}(p),*,X'_1(1),X'_1(2), \ldots, X'_{L'}(1), X'_{L'}(2))
	\]
	as $k$ tends to infinity.
	
	\emph{Claim c)} 
	To simplify notation, we set $Z := 	Y + X_1(2) + \ldots + X_L(2)$. We start with the case $|\Omega^c| < \infty$. Choose $\tilde{k} > \sup (\Omega^c)$ such that $\Pr{\xi=k}>0$ for all $k \ge \tilde{k}$. Using $X(1) + X(2) + 1 \in \Omega^c$, it follows that for $k \ge \tilde{k}$
	\begin{align*}
	\Pr{\max(Y,1 + X_1(1) + X_1(2), \ldots,1 + X_L(1) + X_L(2)) = k , Z  = k+x } =0
	\end{align*}
	if $x < 0$. For $x \ge 0$ it holds that
	\begin{align*}
	&\Pr{\max(Y,1 + X_1(1) + X_1(2), \ldots,1 + X_L(1) + X_L(2)) = k \mid Z  = k+x }  \\
	&= \Pr{Y=k \mid Z = k+x} \\
	&= \frac{\Pr{Y=k} \Prb{\sum_{i=1}^L X_i(2) = x}}{\Pr{Z=k+x}}.
	\end{align*}
	Using Lemma~\ref{le:regvar} and the fact that $\sum_{i=1}^L X_i(2)$ has finite exponential moments, it follows that there are constants $C_1,c_1>0$ with
	\begin{align*}
	\frac{\Pr{Y=k} \Prb{\sum_{i=1}^L X_i(2) = x}}{\Pr{Z=k+x}} \le  C_1	\frac{\Pr{\xi=k} \exp(-c_1 x)}{\Pr{\xi=k+x}} .
	\end{align*}
	
	It remains to treat the case $|\Omega|<\infty$. Chose $\tilde{k} > \sup(\Omega)$ such that $\Pr{\xi=k}>0$ for all $k \ge \tilde{k}$.  Using $Y \in \Omega$, it follows that  there is a constant $C_0>0$ such that for $k \ge \tilde{k}$
	\begin{align*}
	&\Pr{\max(Y,1 + X_1(1) + X_1(2), \ldots,1 + X_L(1) + X_L(2)) = k \mid Z  = k+x }  \\
	&\quad =  \Pr{\max(X_1(1) + X_1(2), \ldots, X_L(1) + X_L(2)) = k-1 \mid Z  = k+x } \\
	&\quad \le  \frac{\Pr{X(1) + X(2) +1 = k}}{\Pr{Z=k+x}} \\&\quad \quad \,\,\,\sum_{\ell \ge 1} \ell \Pr{L=\ell} 
	\Prb{\sum_{i=1}^{\ell -1} X_i(2) = x+1+X(1)- Y, \max_{1 \le i < \ell}(X_i(1) + X_i(2)) \le k-1} \\
	&\quad \le C_0  \frac{\Pr{\xi= k}}{\Pr{\xi=k+x}} \\&\quad \quad \,\,\,\sum_{\ell \ge 1} \ell \Pr{L=\ell} 
	\Prb{\sum_{i=1}^{\ell -1} X_i(2) = x+1+X(1)- Y, \max_{1 \le i < \ell}(X_i(1) + X_i(2)) \le k-1}. 
	\end{align*}
	Choose some constant $x_0>0$ such that $\Pr{\xi = s} >0$ for all integers $s > x_0  - \sup(\Omega)$. For $x<x_0$ (including explicitly negative values of $x$), the fact that $Y$ is bounded and $X(1)$ has finite exponential moments entails that the expression in the last line admits a bound of the form $C_2 \exp(-c_2|x|)$ for some constants $C_2, c_2>0$ that do not depend on $x$ or $k$.

	For $x > x_0$, note that all summands with $(\ell-1)(k-1) < x+1 - \sup(\Omega)$ equal zero. Hence it suffices to sum over integers $\ell$ with $\ell > x/k -1$. Applying the bound \cite[Thm. 4.11]{MR3097424}  and using the fact that $1 +X(1) - Y$ has finite exponential moments it follows that for any $\epsilon>0$ there are constants $C_3, c_3, \ell_0 >0$ such that for all $\ell \ge \ell_0$ 
	\begin{align*}
	\Prb{\sum_{i=1}^{\ell -1} X_i(2) = x+1+X(1)- Y} &\le c_3(1+ \epsilon)^\ell \Pr{X_1(2) = x+1+X(1)- Y} \\
	&\le C_3 (1 + \epsilon)^\ell \Pr{\xi = x}.
	\end{align*}
	Using that $L$ has finite exponential moments and taking $\epsilon>0$ small enough it follows that
	\begin{multline*}
	\sum_{\ell \ge 1} \ell \Pr{L=\ell} 
	\Prb{\sum_{i=1}^{\ell -1} X_i(2) = x+1+X(1)- Y, \max_{1 \le i < \ell}(X_i(1) + X_i(2)) \le k-1} \\
	C_4 \exp(-c_4 x/k) \Pr{\xi = x}.
	\end{multline*}
	
	Combining the two bounds for the cases $x \le x_0$ and $x>x_0$, Inequality~\eqref{eq:bob} follows. This concludes the proof.
\end{proof}



\subsection{Subcriticality}

Rizzolo~\cite[Lem. 6]{MR3335013} calculated moments of $\zeta$ using the representation
\begin{align}
\label{eq:yoooo}
Y + \sum_{i=1}^L X_i(2) \eqdist \left( 1 + \sum_{i=1}^{N(\xi)} (\xi_i-1) \,\,\bigg\vert\,\, N(\xi) \le \tau_{-1}(\xi) \right)
\end{align}
with
\begin{align}
N(\xi) = \inf\{k \ge 1 \mid \xi_k \in \Omega\}
\end{align}
and
\begin{align}
\tau_{-1}(\xi) = \inf \left\{k \ge 1 \,\,\bigg\vert\,\, \sum_{i=1}^k (\xi_i -1) = -1\right\}.
\end{align}
Specifically,~\cite[Lem. 6]{MR3335013} shows that if $\Ex{\xi}=1$ then $\Ex{\zeta}=1$. The arguments may be copied almost verbatim to calculate $\Ex{\zeta}$ if $\xi$ is subcritical:
\begin{proposition}
	\label{pro:subcrit}
	It holds that
	\begin{align}
	\Ex{\zeta} = 1 + (1-p)\frac{\Ex{\xi}-1}{\Pr{\xi \in \Omega}}<1.
	\end{align}
\end{proposition}
\begin{proof}
	First,~\eqref{eq:blowup} and Wald's first equation entails
	\begin{align}
	\label{eq:yoo}
	\Ex{\zeta} &= (1-p)\Exb{1 + \sum_{i=1}^{N(\xi)} (\xi_i-1) \,\,\bigg\vert\,\, N(\xi) \le \tau_{-1}(\xi)} \nonumber \\
	&= 1-p + \Exb{\sum_{i=1}^{N(\xi)} (\xi_i-1) , N(\xi) \le \tau_{-1}(\xi)}. 
	\end{align}
	Again by Wald's first equation, it holds that
	\begin{align}
	\Exb{\sum_{i=1}^{N(\xi)} (\xi_i-1)} = \Ex{N(\xi)}(\Ex{\xi} -1) 
	= 	\frac{\Ex{\xi}-1}{\Pr{\xi \in \Omega}}.
	\end{align}
	Using the strong Markov property of $(\xi_i)_{i \ge 1}$ at the stopping time $\tau_{-1}(\xi)$ it follows that
	\begin{align}
	\Exb{\sum_{i=1}^{N(\xi)} (\xi_i-1), N(\xi)>\tau_{-1}(\xi)} &= \Pr{N(\xi)>\tau_{-1}(\xi)}\left(-1 + \Exb{\sum_{i=1}^{N(\xi)} (\xi_i-1)}\right) \nonumber \\
	&= p\left(-1 + \frac{\Ex{\xi}-1}{\Pr{\xi \in \Omega}} \right).
	\end{align}
	Hence
	\begin{align}
	\Exb{\sum_{i=1}^{N(\xi)} (\xi_i-1), N(\xi) \le \tau_{-1}(\xi)}  &=  \Exb{\sum_{i=1}^{N(\xi)} (\xi_i-1)} - \Exb{\sum_{i=1}^{N(\xi)} (\xi_i-1), N(\xi)>\tau_{-1}(\xi)}  \nonumber \\
	&= p + (1-p)\frac{\Ex{\xi}-1}{\Pr{\xi \in \Omega}}.
	\end{align}
	Hence
	\begin{align}
	\label{eq:zeta}
	\Ex{\zeta} &= 1 + (1-p)\frac{\Ex{\xi}-1}{\Pr{\xi \in \Omega}}.
	\end{align}
	As $\Ex{\xi}<1$, it follows that $\Ex{\zeta}<1$.
\end{proof}

In~\cite[Lem. 6]{MR3335013} it was also shown that if $\Ex{\xi}=1$ and $\Va{\xi}<\infty$, then the variance of $\zeta$ equals $(1-p)^2\Va{\xi}/\Pr{\xi \in \Omega}$. Arguing similarly we may also compute $\Va{\zeta}$ if $\xi$ is subcritical instead:

\begin{proposition}
	\label{pro:var}
	The offspring distribution $\xi$ has finite variance if and only if $\zeta$ has finite variance. If this is the case, then
	\begin{align}
	\label{eq:vazeta}
	\Va{\zeta} =&  \frac{(1-p)^2(\Ex{\xi^2}-1)-(1-p)p(1-\Ex{\xi})}{\Pr{\xi \in \Omega}} \nonumber \\&+ \frac{(1-p)^2(1 -\Ex{\xi})( 1 - \Ex{\xi} - 2 \Ex{\xi, \xi \in \Omega})}{\Pr{\xi \in \Omega}^2} .
	\end{align}
\end{proposition}

We will require Proposition~\ref{pro:var} later in order to compute the slowly varying function $g_\Omega(n)$ of Theorem~\ref{te:main1}.
Note that if we formally set $p=0$, then expression for $\Va{\zeta}$ in~\eqref{eq:vazeta} simplifies to the expression of the variance in~\eqref{eq:fvar}. The latter was obtained  directly from the generating functions using a computer algebra system. Moreover, if we formally set $\Ex{\xi}=1$, then~\eqref{eq:vazeta} simplifies to the expression $(1-p)^2\Va{\xi}/\Pr{\xi \in \Omega}$ obtained in~\cite[Lem. 6]{MR3335013} in this case.

\begin{proof}[Proof of Proposition~\ref{pro:var}]
	We know from~\eqref{eq:zetadensity} that the variance of $\zeta$ is finite if and only if the variance of $\xi$ is finite
	
	Suppose that $\Va{\xi}<\infty$. For now, set $Z := Y + \sum_{i=1}^L X_i(2)$. Equations~\eqref{eq:blowup},~\eqref{eq:yoo}, xand~\eqref{eq:yoooo} entail that
	\begin{align}
	\label{eq:corona1}
	\Va{\zeta} &= \Ex{\zeta^2} - \Ex{\zeta}^2 \nonumber \\
	&= \Ex{Z} p (1-p) + \Ex{Z^2}(1-p)^2 - \Ex{\zeta}^2  \nonumber \\
	&= \Ex{\zeta} p + \Ex{Z^2}(1-p)^2 - \Ex{\zeta}^2.
	\end{align}
	From~\eqref{eq:yoooo} and~\eqref{eq:yoo} it follows that
	\begin{align}
	\Ex{Z^2} &= 1 + 2 \Exb{\sum_{i=1}^{N(\xi)} (\xi_i-1) \,\,\bigg\vert\,\, N(\xi) \le \tau_{-1}(\xi)} +\Exb{ \left(\sum_{i=1}^{N(\xi)} (\xi_i-1) \right)^2 \,\,\bigg\vert\,\, N(\xi) \le \tau_{-1}(\xi)} \\
	&= \frac{2\Ex{\zeta}}{1-p} -1 + \Exb{ \left(\sum_{i=1}^{N(\xi)} (\xi_i-1) \right)^2 \,\,\bigg\vert\,\, N(\xi) \le \tau_{-1}(\xi)}. \nonumber
	\end{align}
	Combining this with~\eqref{eq:corona1}, it follows that
	\begin{align}
	\label{eq:lasttime}
	\Va{\zeta} = (2-p)\Ex{\zeta} - (1-p)^2 - \Ex{\zeta}^2 + (1-p)\Exb{ \left(\sum_{i=1}^{N(\xi)} (\xi_i-1) \right)^2 , N(\xi) \le \tau_{-1}(\xi)}.
	\end{align}
	\eqref{eq:zeta}
	With $S_n = \sum_{i=1}^n \xi_i$ and Equation~\eqref{eq:zeta}, this simplifies to
	\begin{align}
	\label{eq:comeoooooon}
	\Va{\zeta} =& (1-p)\left( p + p\frac{1 - \Ex{\xi}}{\Pr{\xi \in \Omega}} -(1-p)\frac{(1- \Ex{\xi})^2}{\Pr{\xi \in \Omega}^2} \right) \nonumber \\ &+ (1-p)^2 \Exb{ (S_{N(\xi)} - N(\xi))^2 , N(\xi) \le \tau_{-1}(\xi)}
	\end{align}
	
	Our next step is to calculate the unconditioned expectation $\Exb{(S_{N(\xi)} - N(\xi))^2}$.
	Wald's second equation yields
	\begin{align}
	\label{eq:ted}
	\Exb{ \left(S_{N(\xi)} - \Ex{\xi}N(\xi) \right)^2 } = \Va{\xi} \Ex{N(\xi)} = \frac{\Va{\xi}}{\Pr{\xi \in \Omega}}.
	\end{align}
	Moreover, 
	\begin{multline}
	\label{eq:diff}
	\Exb{ \left(S_{N(\xi)} - N(\xi) \right)^2 } - \Exb{ \left(S_{N(\xi)} - \Ex{\xi}N(\xi) \right)^2 } \\= 2(\Ex{\xi}-1) \Ex{N(\xi) S_{N(\xi)} } + (1 - \Ex{\xi}^2) \Ex{N(\xi)^2}
	\end{multline}
	An elementary calculation shows that
	\begin{align}
	\label{eq:di1}
	\Ex{N(\xi)^2} = \frac{1 + \Pr{\xi \notin \Omega}}{\Pr{\xi \in \Omega}^2}.
	\end{align}
	Moreover, using the definition of $N(\xi)$,
	\begin{align}
	&\Ex{N(\xi) S_{N(\xi)} } \nonumber \\&= \Ex{N(\xi) S_{N(\xi)}, \xi_1 \in \Omega }  + \Ex{N(\xi) S_{N(\xi)}, \xi_1 \notin \Omega } \nonumber  \\
	&= \Ex{\xi, \xi \in \Omega} + \Pr{\xi \notin \Omega} \left(\Ex{N(\xi) S_{N(\xi)} } + \Ex{S_{N(\xi)}} \right) + \Ex{\xi, \xi \notin \Omega}\left(1 + \Ex{N(\xi)} \right)  
	\end{align}
	Using Wald's first equation, it follows that
	\begin{align}
	\label{eq:di2}
	\Pr{\xi \in \Omega}\Ex{N(\xi) S_{N(\xi)} } &= \Ex{\xi} + \Ex{N(\xi)} \Ex{\xi, \xi \notin \Omega} + \Ex{N(\xi)}\Ex{\xi} \Pr{\xi \notin \Omega} \\
	&= \Ex{\xi} + \frac{\Ex{\xi, \xi \notin \Omega}}{\Pr{\xi \in \Omega}} + \frac{\Ex{\xi} \Pr{\xi \notin \Omega}}{\Pr{\xi \in \Omega}} \nonumber \\
	&= \frac{\Ex{\xi} +\Ex{\xi, \xi \notin \Omega}}{\Pr{\xi \in \Omega}} \nonumber
	\end{align}
	Applying~\eqref{eq:di1} and~\eqref{eq:di2} to~\eqref{eq:diff} yields
	\begin{align}
	&\Exb{ \left(S_{N(\xi)} - N(\xi) \right)^2 } - \Exb{ \left(S_{N(\xi)} - \Ex{\xi}N(\xi) \right)^2 } \nonumber \\&\qquad\qquad= 2(\Ex{\xi}-1)\frac{\Ex{\xi} +\Ex{\xi, \xi \notin \Omega}}{\Pr{\xi \in \Omega}^2} + (1 - \Ex{\xi}^2) \frac{1 + \Pr{\xi \notin \Omega}}{\Pr{\xi \in \Omega}^2}  
	\end{align}
	Using~\eqref{eq:ted}, it follows that
	\begin{align}
	\label{eq:theexp}
	\Exb{(S_{N(\xi)} - N(\xi))^2} = \frac{\Ex{\xi^2}-1}{\Pr{\xi \in \Omega}} + \frac{2(\Ex{\xi}-1)^2 - 2(1-\Ex{\xi})\Ex{\xi, \xi \in \Omega}  }{\Pr{\xi \in \Omega}^2}.
	\end{align}
	Using the strong Markov property of $(\xi_i)_{i \ge 1}$ at the stopping time $\tau_{-1}(\xi)$ it follows that
	\begin{align}
	\Exb{ \left(\sum_{i=1}^{N(\xi)} (\xi_i-1) \right)^2 , N(\xi) > \tau_{-1}(\xi)} &= \Pr{N(\xi) > \tau_{-1}(\xi)} \Exb{ \left(-1 +\sum_{i=1}^{N(\xi)} (\xi_i-1) \right)^2 } \nonumber  \\ 
	&= p\left(1- 2	\frac{\Ex{\xi}-1}{\Pr{\xi \in \Omega}} + \Exb{(S_{N(\xi)} - N(\xi))^2} \right). 
	\end{align}
	Hence
	\begin{align}
	\Exb{ (S_{N(\xi)} - N(\xi))^2 , N(\xi) \le \tau_{-1}(\xi)} &= (1-p)\Exb{(S_{N(\xi)} - N(\xi))^2} - p \left( 1- 2	\frac{\Ex{\xi}-1}{\Pr{\xi \in \Omega}}\right).
	\end{align}
	Using Equation~\eqref{eq:theexp} it follows that
	\begin{multline}
	\label{eq:almooooost}
	\Exb{ (S_{N(\xi)} - N(\xi))^2 , N(\xi) \le \tau_{-1}(\xi)} \\= -p + \frac{(1-p)(\Ex{\xi^2}-1) - 2p(1 - \Ex{\xi})}{\Pr{\xi \in \Omega}} + (1-p)\frac{2(\Ex{\xi}-1)^2 - 2(1-\Ex{\xi})\Ex{\xi, \xi \in \Omega}  }{\Pr{\xi \in \Omega}^2} .
	\end{multline}
	
	Having established Equation~\eqref{eq:almooooost} we may now evaluate Equation~\eqref{eq:comeoooooon}, yielding
	\begin{align}
	\Va{\zeta} =&\, (1-p)\left( p + p\frac{1 - \Ex{\xi}}{\Pr{\xi \in \Omega}} -(1-p)\frac{(1- \Ex{\xi})^2}{\Pr{\xi \in \Omega}^2} \right) \nonumber ¸\\ &+ (1-p) \left( -p + \frac{(1-p)(\Ex{\xi^2}-1) - 2p(1 - \Ex{\xi})}{\Pr{\xi \in \Omega}}\right) \nonumber \\
	&+ (1-p)^2\frac{2(\Ex{\xi}-1)^2 - 2(1-\Ex{\xi})\Ex{\xi, \xi \in \Omega}  }{\Pr{\xi \in \Omega}^2}  \nonumber \\
	=&\,   \frac{(1-p)^2(\Ex{\xi^2}-1)-(1-p)p(1-\Ex{\xi})}{\Pr{\xi \in \Omega}} \nonumber \\&+ \frac{(1-p)^2(1 -\Ex{\xi})( 1 - \Ex{\xi} - 2 \Ex{\xi, \xi \in \Omega})}{\Pr{\xi \in \Omega}^2}  
	\end{align}
\end{proof}


Note that using~\eqref{eq:gwt}, Proposition~\ref{pro:subcrit} and Equation~\eqref{eq:zetadensity} it follows that
\begin{align}
\label{eq:lomegat2}
\Pr{L_\Omega(\mT) = n} &= \Pr{L_\Omega(\mT) >0}\Pr{|\mA| = n} \nonumber \\
&\sim  \frac{(1-p)^{\alpha+1}}{\Pr{\xi \in \Omega}} \frac{f(n)}{(n(1 - \Ex{\zeta}))^{1+\alpha}} \nonumber \\
&= \frac{\Pr{\xi \in \Omega}^\alpha}{(1- \Ex{\xi})^{1 + \alpha}} f(n) n^{-1-\alpha}. 
\end{align}

\subsection{Proof of the main theorems for the case $0 \notin \Omega$}
Throughout this section we assume that the random tree $\mT_n^\Omega$ is constructed from the tree $\mA_n$ by the  blow-up procedure described in Section~\ref{sec:blowup}.

\begin{proof}[Proof of Theorem~\ref{te:main2} for the case $0 \notin \Omega$]
	The offspring distribution $\zeta$ has a regularly varying density by Lemma~\ref{le:regvar} and satisfies $\Ex{\zeta}<1$ by Proposition~\ref{pro:subcrit}. As we already proved Theorem~\ref{te:main2} for the case $\Omega=\ndN_0$, it follows that for any sequence $(t_n)_{n \ge1}$ of positive integers satisfying $t_n = o(n)$ and $t_n \to \infty$
	\begin{align}
	\label{eq:ddnotin}
	\left(F_0(\mA_n), (F_i(\mA_n))_{1 \le i \le \Delta(\mA_n) - t_n}, \one_{\sum_{i = \Delta(\mA_n) - t_n }^{\Delta(\mA_n)} |F_i(\mA_n)| \ge \frac{2t_n}{1 - \Ex{\zeta}} } \right) \atv \left(\mA^\bullet, (\mA^i)_{1 \le i \le \Delta_{\langle n \rangle}^\mA - t_n},0\right),
	\end{align}
	with $(\mA_i)_{i \ge 1}$ denoting a family of independent $\zeta$-Galton--Watson trees, $\mA^\bullet$ the analogue of $\mT^\bullet$ that is constructed with $\zeta$ instead of $\xi$, and
	\begin{align}
	\Delta_{\langle n \rangle}^\mA :=  \sup \left\{ d \ge 1 \,\,\bigg\rvert\,\, |\mA^\bullet| +  \sum_{i=1}^d |\mA^i| \le n  \right\}.
	\end{align}
	Let $v$ denote the lexicographically first vertex with maximum outdegree in~$\mA_n$. The result of replacing the largest component from the decoration of $v$ by a $*$-placeholder is distributed like $P^{\Delta(\mA_n)}$, with $(P^k)_{k \ge 1}$ assumed to be independent from $\mA_n$. As $\Delta(\mA_n) \convdis \infty$, it follows from Equations~\eqref{eq:limb1} and~\eqref{eq:limb2} that 
	\begin{align}
	\label{eq:doh1}
	P^{\Delta(\mA_n)} \convdis (*, X_1(1),X_1(2), \ldots, X_{L}(1), X_{L}(2))
	\end{align}
	if $\Omega^c$ is finite, and 
	\begin{multline}
	\label{eq:doh2}
	P^{\Delta(\mA_n)} \convdis (Y, X_1(1),X_1(2), \ldots, X_{L}(1), X_{L}(2), \mathrm{Geom}(p),*,\\X'_1(1),X'_1(2), \ldots, X'_{L'}(1), X'_{L'}(2))
	\end{multline}
	if $\Omega$ is finite.
	
	Having~\eqref{eq:ddnotin},~\eqref{eq:doh1}, and~\eqref{eq:doh2} at hand, the proof may now be completed in an entirely analogous fashion as in the case $0 \in \Omega$.
\end{proof}

We are going to need the some preliminary observations before proceeding to prove Theorem~\ref{te:main1}. The offspring distribution $\zeta$ has a regularly varying density satisfies $\Ex{\zeta}<1$, see Lemma~\ref{le:regvar} and Proposition~\ref{pro:subcrit}. We already proved Theorem~\ref{te:main2} for the case $\Omega=\ndN_0$, hence there is a slowly varying function $g_A$ with
\begin{align}
\label{eq:base2}
\Pr{\Delta(\mA_n) = N} = \frac{1}{g_A(n)n^{1/\theta}}\left(h\left(\frac{ (1-\Ex{\zeta})n - N}{g_A(n)n^{1/\theta}}   \right) + o(1)\right)
\end{align}
uniformly for all $N \in \ndZ$. Using the asymptotic expression of $\Pr{\zeta=n}$ from Lemma~\ref{le:regvar} it follows analogously as in Proposition~\ref{pro:gomega} that
\begin{align}
\label{eq:gA}
g_A(n) = \begin{cases}
\sqrt{\frac{\Va{{\zeta}}}{2}}, \quad &\Va{\xi}<\infty \\
\frac{(1-p)g(n)}{\Pr{\xi \in \Omega}^{1/\theta}}, \quad &\Va{\xi} = \infty.
\end{cases}
\end{align}

\begin{lemma}
	\label{le:llt5}
	Assume that $(Y, \bm{X}_1, \ldots, \bm{X}_L) $ is independent from $\Delta(\mA_n)$ and set $Z := Y + \sum_{i=1}^L X_i(2)$. Set
	\begin{align}
	g_\Omega(n) = \begin{cases}
	\frac{1}{\sqrt{2}}\left(\frac{\Ex{\xi^2}-1}{\Pr{\xi \in \Omega}} + \frac{(1- \Ex{\xi})(1 - \Ex{\xi} + 2 \Ex{\xi, \xi \in \Omega})}{\Pr{\xi \in \Omega}^2}\right)^{1/2}, \quad &\Va{\xi}<\infty \\
	\frac{g(n)}{\Pr{\xi \in \Omega}^{1/\theta}}, \quad &\Va{\xi} = \infty.
	\end{cases}
	\end{align}
	Then
	\begin{align}
	\label{eq:toprove}
	\Pr{Z= \ell \mid \mathrm{Bin}(Z,1-p) = \Delta(\mA_n)} = \frac{1}{g_\Omega(n)n^{1/\theta}} \left( h\left(\frac{ \frac{1-\Ex{\xi}}{\Pr{\xi \in \Omega}}n - \ell}{g_\Omega(n)n^{1/\theta}}   \right) + o(1)\right)
	\end{align}
	uniformly for all integers $\ell \ge 0$.
\end{lemma}
\begin{proof}
	Clearly it holds that
	\begin{align}
	\Pr{Z= \ell \mid \mathrm{Bin}(Z,1-p) = \Delta(\mA_n)} = \sum_{N=1}^n \Pr{ \Delta(\mA_n)= N} \Pr{Z= \ell \mid \mathrm{Bin}(Z,1-p) = N}.
	\end{align}
	By Proposition~\ref{pro:largedev} it follows that there are constants $\epsilon,\delta>0$  such that
	\begin{align}
	\label{eq:deldev}
	n^{1/\theta}\Pr{\Delta(\mA_n) \le \epsilon n} = O(n^{-\delta}).
	\end{align}
	Hence
	\begin{align}
	\label{eq:lastinter}
	\Pr{Z= \ell \mid \mathrm{Bin}(Z,1-p) =&\, \Delta(\mA_n)} = O(n^{-\delta-1/\theta}) \\&+ \sum_{N=\lfloor \epsilon n \rfloor}^n \Pr{ \Delta(\mA_n)= N} \Pr{Z= \ell \mid \mathrm{Bin}(Z,1-p) = N}. \nonumber
	\end{align}
	Moreover, $\Pr{Z= \ell \mid \mathrm{Bin}(Z,1-p) = N} = 0$ whenever $\ell < N$. It follows that
	\[
	\Pr{Z= \ell \mid \mathrm{Bin}(Z,1-p) = \Delta(\mA_n)} = O(n^{-\delta - 1/\theta})
	\]
	uniformly for all $\ell \le \epsilon n$. Hence it suffices to verify~\eqref{eq:toprove} for $\ell > \epsilon n$.
	Set \begin{align}
	\label{eq:expforx}
	x = \frac{N-\ell (1-p)}{\sqrt{\ell}}\end{align}
	and
	\begin{align}
	I_\ell= \{ N \in \ndN \mid \epsilon n \le N \le n, |\ell - N/(1-p)| \le \sqrt{N} \log N \}.
	\end{align}
	For $\epsilon n \le N \le n$, it follows from~\eqref{eq:strength1} and~\eqref{eq:strength2} that
	\begin{align}
	\Pr{Z = \ell \mid \mathrm{Bin}{(Z,1-p)} = N} = O(N^{-\Theta(\log N)}) = O(n^{-\Theta(\log n)})
	\end{align}
	if $N \notin I_\ell$, and 
	\begin{multline}
	\Pr{Z = \ell \mid \mathrm{Bin}{(Z,1-p)} = N} = \\	\frac{1}{\sqrt{N}} \left( o\left(\frac{1}{\max\left(1, x^2\right)}\right) + \frac{1-p}{\sqrt{2 \pi p} } \exp\left(- \frac{(1-p)^2}{2p} \left(\frac{\ell - N/(1-p)}{\sqrt{N}} \right)^2 \right) \right)
	\end{multline}
	for $N \in I_\ell$. Using ~\eqref{eq:lastinter}, it follows that 
	\begin{align}
	\Pr{Z= \ell \mid \mathrm{Bin}(Z,1-p) = \Delta(\mA_n)} = A + B
	\end{align}
	with
	\begin{align}
	A =  O(n^{-\delta-1/\theta}) +  \sum_{N \in I_\ell} \Pr{\Delta(\mA_n) = N} 	\frac{1}{\sqrt{N}}  o\left(\frac{1}{\max\left(1, x^2\right)}\right),
	\end{align}
	and
	\begin{align}
	B =  \sum_{N \in I_\ell} \Pr{\Delta(\mA_n) = N} 	\frac{1}{\sqrt{N}} \frac{1-p}{\sqrt{2 \pi p} } \exp\left(- \frac{1}{2p} \left(\frac{(1-p)\ell - N}{\sqrt{N}} \right)^2 \right).
	\end{align}
	Note that $N \in I_\ell$ entails that $\ell < n(1+o(1))/(1-p)$. Hence~\eqref{eq:toprove} holds uniformly for $\ell \ge 2n/(1-p)$ as $n$ tends to infinity, and throughout the rest of the proof we may assume that
	\begin{align}
	\label{eq:assumonell}
	\epsilon n < \ell < 2n/(1-p).
	\end{align}
	Using~\eqref{eq:base2}, it follows that 
	\begin{align}
	A = o\left( \frac{1}{g_A(n)n^{1/\theta}} \right).
	\end{align} As for $B$, the asymptotic $N \sim (1-p)\ell$ entails that uniformly for all integers $\ell$ satisfying~\eqref{eq:assumonell}
	\begin{align}
	\label{eq:latestB}
	B = (1 + o(1)) \sum_{N \in I_\ell} \Pr{\Delta(\mA_n) = N} 	\frac{1}{\sqrt{\ell}} \frac{1-p}{\sqrt{2 \pi p(1-p)} } \exp\left(- \frac{1}{2p(1-p)} \left(\frac{(1-p)\ell - N}{\sqrt{\ell}} \right)^2 \right).
	\end{align}
	By~\eqref{eq:base2} it holds that
	\begin{align}
	\Pr{\Delta(\mA_n) = N} = \frac{1}{g_A(n)n^{1/\theta}}   &\left(h\left(\frac{ (1-\Ex{\zeta})n - N}{g_A(n)n^{1/\theta}}   \right) + o(1)\right).
	\end{align}
	For the remaining part of the proof, we have to argue differently according to whether $1 < \theta < 2$ or $\theta = 2$.
	
	Let us start with the case $\theta < 2$. Then, 
	\begin{align}
	\sqrt{n} \log n = o(g_A(n) n^{1/\theta}).
	\end{align} Hence it holds uniformly for all $\ell$ satisfying~\eqref{eq:assumonell} and $N \in I_\ell$ 
	\begin{align}
	h\left(\frac{ (1-\Ex{\zeta})n - N}{g_A(n)n^{1/\theta}}   \right) = 	h\left(\frac{ \frac{1-\Ex{\zeta}}{1-p}n - \ell}{(1-p)^{-1}g_A(n)n^{1/\theta}}   \right)
	\end{align}
	Applying this to~\eqref{eq:latestB} yields
	\begin{multline}
	B = (1+o(1))\frac{1-p}{g_A(n)n^{1/\theta}} \left( h\left(\frac{ \frac{1-\Ex{\zeta}}{1-p}n - \ell}{(1-p)^{-1}g_A(n)n^{1/\theta}}   \right) + o(1)\right) \\\sum_{N \in I_\ell} \frac{1}{\sqrt{\ell}} \frac{1}{\sqrt{2 \pi p(1-p)} } \exp\left(- \frac{1}{2p(1-p)} \left(\frac{(1-p)\ell - N}{\sqrt{\ell}} \right)^2 \right).
	\end{multline}
	Using dominated convergence and the fact that $h$ is bounded, this simplifies to 
	\begin{align}
	B = \frac{1-p}{g_A(n)n^{1/\theta}} \left( h\left(\frac{ \frac{1-\Ex{\zeta}}{1-p}n - \ell}{(1-p)^{-1}g_A(n)n^{1/\theta}}   \right) + o(1)\right).
	\end{align}
	Equation~\eqref{eq:toprove} now follows using the expression of $g_A(n)$ from Equation~\eqref{eq:gA} and the expression of $\Ex{\zeta}$ from Proposition~\ref{pro:subcrit}.
	
	It remains to treat the case $\theta=2$, where
	\begin{align}
	h(z) = \exp(-z^2/4)/\sqrt{4\pi}.
	\end{align}
	Using again dominated convergence and the fact that $h$ is bounded, it follows that
	\begin{align}
	B = \frac{1-p}{g_A(n)\sqrt{n}} \Bigg( o(1) + B' \Bigg).
	\end{align}
	for
	\begin{multline}
	\label{eq:goon}
	B' =  \sum_{N \in I_\ell} \frac{1}{\sqrt{4 \pi}} \exp\left( - \frac{1}{4}\left(\frac{(1- \Ex{\zeta})n - N}{g_A(n) \sqrt{n}} \right)^2 \right) \\	\frac{1}{\sqrt{\ell}} \frac{1}{\sqrt{2 \pi p(1-p)} } \exp\left(- \frac{1}{2p(1-p)} \left(\frac{(1-p)\ell - N}{\sqrt{\ell}} \right)^2 \right).
	\end{multline}
	Using dominated convergence, it follows that for any $\epsilon_1>0$ we may select a constant $M_1>0$ sufficiently large such that the sum of all summands in this expression of $B'$ for which $N \notin (1-p) \ell \pm M_1 \sqrt{n}$ is smaller than $\epsilon_1$. If $\Va{\xi}=\infty$ (or equivalently $\Va{\zeta}=\infty$), the slowly varying function $g_A(n)$ satisfies $g_A(n) \to \infty$ as $n$ becomes large.  Thus, using dominated convergence analogously as in the case $\theta < 2$, it follows that uniformly for all integers $\ell$ satisfying~\eqref{eq:assumonell}
	\begin{align}
	B' =  \frac{1}{\sqrt{4 \pi}} \exp \left( - \frac{1}{4}\left(\frac{ \frac{1-\Ex{\zeta}}{1-p}n - \ell}{(1-p)^{-1}g_A(n)\sqrt{n}}   \right)^2 \right) + R_{\ell, n}^{(1)}
	\end{align}
	for an error term $R_{\ell, n}^{(1)}$ satisfying $|R_{\ell,n}^{(1)}| < \epsilon_1$ for all large enough $n$. As $\epsilon_1 >0 $ was arbitrary, 	Equation~\eqref{eq:toprove} now follows using the expression of $g_A(n)$ from Equation~\eqref{eq:gA} and the expression of $\Ex{\zeta}$ from Proposition~\ref{pro:subcrit}.
	
	It remains to treat the case $\Va{\xi}=\infty$, where $
	g_A(n) = \sqrt{\Va{\zeta}/2}$ may be chosen to be constant.
	Using dominated convergence it follows
	that we may select a constant $M_2>0$ sufficiently large such that the sum of all summands in the expression~\eqref{eq:goon} of $B'$ for which $N \notin  (1- \Ex{\zeta})n \pm M_2 \sqrt{n}$ or $N \notin (1-p) \ell \pm M_1 \sqrt{n}$ is smaller than $2\epsilon_1$ (for large enough $n$). This entails also that  for large enough $n$ it holds that there is a constant $M_3 := (M_1 + M_2)/(1-p)$ such that
	$B' < 2\epsilon_1$ uniformly for all $\ell$ with $\ell \notin \frac{1 - \Ex{\zeta}}{1-p}n \pm M_3 \sqrt{n}$. Now, suppose that  $\ell \in \frac{1 - \Ex{\zeta}}{1-p}n \pm M_3 \sqrt{n}$. This allows us to write
	\begin{align}
	\ell = \frac{1- \Ex{\zeta}}{1-p}n + y_\ell g_A(n) \sqrt{n}(1-p)^{-1}
	\end{align}
	with $|y_\ell|$ bounded by some constant multiple of $M_3$. Furthermore, set
	\begin{align}
	z = \frac{N - (1-p)\ell }{\sqrt{n}}
	\end{align}
	and
	\begin{align}
	h_1(z) = \frac{1}{\sqrt{2 \pi p (1- \Ex{\zeta})}} \exp\left(- \frac{1}{2p(1- \Ex{\zeta})} z^2\right).
	\end{align}
	Thus, choosing the constants $M_i$ to be large enough, it follows that
	\begin{align}
	B' &= (1 + o(1))\sum_{N \in I_\ell} \frac{1}{\sqrt{n}} h(z/g_A(n) + y_\ell) h_1(z) \nonumber\\
	&= \int_{-\infty}^\infty  h(z/g_A(n) + y_\ell) h_1(z) \,\mathrm{d}z + R_{\ell,n}^{(2)}, 
	\end{align}
	for an error term $R_{\ell,n}^{(2)}$ satisfying $|R_{\ell,n}^{(2)}| <  3 \epsilon_1$ for large enough $n$. Now, \[y \mapsto \int_{-\infty}^\infty  h(z/g_A(n) + y_\ell) h_1(z) \,\mathrm{d}z\] is the density of \[A - B/g_A(n)\] with $A, B$   variables with distributions \[A \eqdist \cN(0,2)\] and \[B \eqdist \cN(0, p(1- \Ex{\zeta})).\] As \[A - B/g_A(n) \eqdist \cN(0, \sigma_C^2)\] for 
	\begin{align}
	\sigma_C^2 =  2 + p(1-\Ex{\zeta}) / g_A^2(n),
	\end{align} it follows that for all $y \in \ndR$
	\begin{align}
	\int_{-\infty}^\infty  h(z/g_A(n) + y) h_1(z) \,\mathrm{d}z &= \frac{1}{\sqrt{2\pi} \sigma_C} \exp\left(-\frac{y^2}{2 \sigma_C^2}\right) \nonumber\\
	&= \frac{\sqrt{2}}{\sigma_C}h\left( \frac{\sqrt{2}y}{\sigma_C}  \right).
	\end{align}
	As $\epsilon_1>0$ was arbitrary, it follows that
	\begin{multline}
	\Pr{Z= \ell \mid \mathrm{Bin}(Z,1-p) = \Delta(\mA_n)} = \\\frac{\sqrt{2}(1-p)}{\sigma_C g_A(n)n^{1/\theta}} \left( h\left(\frac{ \frac{1-\Ex{\zeta}}{1-p}n - \ell}{(1-p)^{-1} 2^{-1/2}\sigma_C g_A(n)n^{1/\theta}}   \right) + o(1)\right).
	\end{multline}
	The expression of $\Ex{\zeta}$ from Proposition~\ref{pro:subcrit} implies that
	\begin{align}
	\frac{1-\Ex{\zeta}}{1-p} = \frac{1- \Ex{\xi}}{\Pr{\xi \in \Omega}}.
	\end{align}
	By Equation~\eqref{eq:gA} we may set $g_A(n) = \sqrt{\frac{\Va{\zeta}}{2}}$ for all $n$. Hence, using the expression of $\Va{\zeta}$ from Proposition~\ref{pro:var}, it follows that
	\begin{align}
	\frac{\sigma_C g_A(n)}{\sqrt{2}(1-p)}  &=  \sqrt{\frac{\Va{\zeta}}{2}} \frac{1}{1-p} \sqrt{1 + \frac{p(1- \Ex{\zeta})}{\Va{\zeta}}}\nonumber \\
	&= \frac{1}{\sqrt{2}} \sqrt{ \frac{\Va{\zeta}}{(1-p)^2} + \frac{p(1-\Ex{\zeta})}{(1-p)^2}  } \nonumber \\
	&= \frac{1}{\sqrt{2}} \sqrt{ \frac{\Va{\zeta}}{(1-p)^2} + \frac{p(1-\Ex{\xi})}{(1-p)\Pr{\xi \in \Omega}}  } \nonumber \\
	&= \frac{1}{\sqrt{2}}\left(\frac{\Ex{\xi^2}-1}{\Pr{\xi \in \Omega}} + \frac{(1- \Ex{\xi})(1 - \Ex{\xi} + 2 \Ex{\xi, \xi \in \Omega})}{\Pr{\xi \in \Omega}^2}\right)^{1/2}. 
	\end{align}
	Equation~\eqref{eq:toprove} now follows.
\end{proof}

Let $v^*$ denote the lexicographically first vertex  of $\mA_n$ with maximal degree. There are two steps for proving Theorem~\ref{te:main1}. The first is to locate a vertex within the blow-up of $v^*$ whose outdegree satisfies a local limit theorem as in~\eqref{eq:toprove}. The second step is to discard all possibilities for a  larger outdegree to appear anywhere in the blow-up of the entire tree $\mA_n$.

Recall from Figure~\ref{fi:blowup2} that the blow-up creates a vertebrate. Hence we may distinguish the vertices on the spine and the vertices created from attaching independent copies of $\mT^*$. We let $D_n$ denote the largest outdegree of a \emph{spine} vertex created from the blow-up of the lexicographically first vertex of $\mA_n$ with outdegree $\Delta(\mA_n)$.

\begin{lemma}
	\label{eq:lltforD}
	Uniformly for all integers $\ell \ge 0$
	\begin{align}
	\Pr{D_n = \ell} = \frac{1}{g_\Omega(n)n^{1/\theta}} \left( h\left(\frac{ \frac{1-\Ex{\xi}}{\Pr{\xi \in \Omega}}n - \ell}{g_\Omega(n)n^{1/\theta}}   \right) + o(1)\right).
	\end{align}
\end{lemma}
\begin{proof}
	We continue to assume that $(Y, \bm{X}_1, \ldots, \bm{X}_L) $ is independent from $\Delta(\mA_n)$.  Let us set
	\begin{align}
	\label{eq:defD}
	D = \max(Y,  X_1(1) + X_1(2)+1, \ldots, X_{L}(1) + X_L(2)+1)
	\end{align}
	Then
	\begin{align}
	D_n \eqdist \left( D \mid \mathrm{Bin}(Z,1-p) = \Delta(\mA_n) \right).
	\end{align}	
	Using Lemma~\ref{le:llt5}, it follows that
	\begin{align}
	\label{eq:ss1}
	&g_{\Omega}(n)n^{1/\theta} \Pr{D_n = \ell} \nonumber \\
	&= g_{\Omega}(n)n^{1/\theta} \sum_{x \ge - \ell} \Pr{Z = \ell + x \mid \mathrm{Bin}(Z, 1-p) = \Delta(\mA_n)} \Pr{D= \ell \mid Z = \ell + x} \nonumber\\
	&=  \sum_{x \ge - \ell} \left( h\left(\frac{ \frac{1-\Ex{\xi}}{\Pr{\xi \in \Omega}}n - \ell-x}{g_\Omega(n)n^{1/\theta}}   \right) + o(1)\right) \Pr{D= \ell \mid Z = \ell + x}.
	\end{align}
	This expression tends to zero for any constant $\ell$. Hence there is some sequence $\ell_n \to \infty$  so that $g_{\Omega}(n)n^{1/\theta} \Pr{D_n = \ell} \to 0$ uniformly for $\ell \le \ell_n$. 
	
	Throughout the following, we assume $\ell \ge \ell_n$. 	We observed in the limits of Equations~\eqref{eq:c1} and~\eqref{eq:c2} the emergence of giant components with a stochastically bounded remainder that admits a distributional limit. Hence there is a random non-negative integer $R$ such that for any constant integer $x$
	\begin{align}
	\Pr{D = \ell \mid Z= \ell +x} \to \Pr{R=x}.
	\end{align}
	Using that $h$ is uniformly continuous and bounded, it follows that there is a sequence of integers $x_n$ with $x_n \to \infty$ so that
	\begin{multline}
	\label{eq:ss2}
	\sum_{|x| \le x_n} \left( h\left(\frac{ \frac{1-\Ex{\xi}}{\Pr{\xi \in \Omega}}n - \ell-x}{g_\Omega(n)n^{1/\theta}}   \right) + o(1)\right) \Pr{D= \ell \mid Z = \ell + x} =  \\h\left(\frac{ \frac{1-\Ex{\xi}}{\Pr{\xi \in \Omega}}n - \ell}{g_\Omega(n)n^{1/\theta}}   \right) + o(1).
	\end{multline}
	Moreover, using Inequality~\eqref{eq:bob}, it follows that there are constants $x', c>0$ with
	\begin{align}
	\label{eq:ss3}
	&\sum_{|x| > x_n, x \ge - \ell} \left( h\left(\frac{ \frac{1-\Ex{\xi}}{\Pr{\xi \in \Omega}}n - \ell-x}{g_\Omega(n)n^{1/\theta}}   \right) + o(1)\right) \Pr{D= \ell \mid Z = \ell + x} \nonumber \\
	&\quad \qquad = O(1) \sum_{|x| > x_n, x \ge -\ell} \Pr{D= \ell \mid Z = \ell + x} \nonumber \\
	&\quad \qquad \le O(1) \sum_{|x| > x_n, x \ge - \ell} \frac{\Pr{\xi= \ell}}{\Pr{\xi = \ell+x}}\left( \one_{x\le x'} \exp(-c|x|) + \one_{x> x'} \exp\left(-\frac{c|x|}{\ell}\right)\Pr{\xi=x}  \right) \nonumber \\
	&\quad \qquad = o(1). 
	\end{align}
	Combining~\eqref{eq:ss1},~\eqref{eq:ss2}, and~\eqref{eq:ss3}, it follows that
	\begin{align}
	g_{\Omega}(n)n^{1/\theta} \Pr{D_n = \ell} =  h\left(\frac{ \frac{1-\Ex{\xi}}{\Pr{\xi \in \Omega}}n - \ell}{g_\Omega(n)n^{1/\theta}}   \right) + o(1).
	\end{align}
\end{proof}

This concludes the first step. Now we prepare to eliminate the possibility for a vertex with larger degree than $D_n$ to appear in $\mT_n^\Omega$. First, we need some rough deviation bounds.

\begin{lemma}
	\label{le:devD}
	There are constants $C,\delta>0$ such that
	\begin{align*}
	\Pr{ D_n < n / \log^2 n \quad \mathrm{ or } \quad  D_n > Cn} = O(n^{-1/\theta - \delta}).
	\end{align*}
\end{lemma}
\begin{proof}
	By~\eqref{eq:deldev} there are constant $\epsilon,\delta>0$  such that
	\begin{align}
	n^{1/\theta}\Pr{\Delta(\mA_n) \le \epsilon n} = O(n^{-\delta}).
	\end{align}
	As $\Delta(\mA_n)<n$, it follows by~\eqref{eq:devi} that  for some constants $c_1,C_1>0$
	\begin{align}
	\Pr{ Z < c_1n \quad \mathrm{ or } \quad  Z > C_1n \mid \mathrm{Bin}(Z,1-p) = \Delta(\mA_n)} = O(n^{-\delta}).
	\end{align}
	With $D$ as in~\eqref{eq:defD}, it follows that  that uniformly for all  $d \in [c_1n, C_1n]$,
	\begin{align}
	\Pr{D \le d/\log^2 n \quad \mathrm{or} \quad D > 2 C_1 \mid Z = d} \le n^{-\Theta(\log n)}.
	\end{align}
	Thus
	\begin{align}
	\Pr{ D < n / \log^2 n \quad \mathrm{ or } \quad  D > 2 C_1 n \mid \mathrm{Bin}(Z,1-p) = \Delta(\mA_n) } = O(n^{-1/\theta - \delta}).
	\end{align}
\end{proof}

During the blow-up procedure that constructs $\mT_n^\Omega$ from $\mA_n$ we attach a random number $V_n$ of independent copies of $\mT^* \eqdist (\mT \mid L_\Omega(\mT) = 0)$. We need to ensure that it's sufficiently unlikely that any of these attached copies  contains a vertex with outdegree $\Delta(\mT_n^\Omega)$. The first step is to control $V_n$:

\begin{lemma}
	\label{le:bootstraplem}
	For any $\epsilon>0$
	\begin{align}
	\label{eq:sss}
	\Prb{V_n \notin n\left(\frac{p}{1-p} + \Ex{L}\Ex{X(1)} \right)(1 \pm \epsilon)} \le \exp(-\Theta(n)).
	\end{align}
\end{lemma}
\begin{proof}
	Recall the expression of $\zeta$ in~\eqref{eq:blowup}.	
	For each $k \ge 1$ let $(Y^{[k]}, \bm{X}_1^{[k]}, \ldots, \bm{X}^{[k]}_{L^{[k]}})$ denote an independent copy of $(Y, \bm{X}_1, \ldots, \bm{X}_{L})$, and set 
	\begin{align}
	\zeta_k = \mathrm{Bin}\left(Y^{[k]} + \sum_{i=1}^{L^{[k]}} X_i^{[k]}(2),1-p\right).
	\end{align}
	Thus, $(\zeta_k)_{k \ge 1}$ is a family of independent copies of $\zeta$. The conditioned $\zeta$-Galton--Watson tree $\mA_n$ corresponds naturally to $(\zeta_1, \ldots, \zeta_n)$ conditioned on the event
	\begin{align}
	\cE_n:= \left\{\min\left\{ m \ge 1 \,\,\Bigg\vert\,\, \sum_{k=1}^m(\zeta_k - 1) = -1\right\} = n \right\}.
	\end{align}
	Recall from Figure~\ref{fi:blowup2} that we construct the tree $\mT_n^\Omega$ from $\mA_n$ by a blow-up procedure, where each vertex is expanded into a vertebrate. All vertices to the left of the spine become roots of independent copies of $\mT^*$. Among the vertices to the right of the spine and the vertices dangling from the tip of the spine a random binomial subset with parameter $p$ becomes is selected, an each vertex from that subset becomes the root of an independent copy of $\mT^*$. Specifically, for the $k$th vertex $v_k$ (with outdegree $\zeta_k$) of $\mA_n$, the number of vertices to the left of the spine in the blow-up is given by
	\[
	\left( \sum_{i=1}^{L^{[k]}} X_i^{[k]}(1) \,\,\Bigg\vert\,\, \cE_n\right).
	\]
	The remaining number of vertices of the vertebrate in the blow-up of the $k$th vertex that become independent copies of $\mT^*$ may be expresssed by
	\[
	\left(Y^{[k]} + \sum_{i=1}^{L^{[k]}} X_i^{[k]}(2) - \zeta_k \,\,\Bigg\vert\,\, \cE_n \right).
	\]
	
	Let us write
	\begin{align}
	\label{eq:jjj1}
	V_n = V_n(1) +V_n(2)
	\end{align}
	with $V_n(1)$ the total number of vertices appearing to the left of the spine in all the blow-ups. Hence
	\begin{align}
	V_n(1) = \left( \sum_{k=1}^n \sum_{i=1}^{L^{[k]}} X_i^{[k]}(1) \,\,\Bigg\vert\,\, \cE_n\right).
	\end{align}
	Note that $\Pr{\cE_n} = \Pr{|\mA|=n}$  varies regularly with index $-1-\alpha$ by~\eqref{eq:lomegat2}.  Moreover, $\sum_{i=1}^L X_i(1)$ has finite exponential moments and $\Exb{\sum_{i=1}^L X_i(1)} = \Ex{L} \Ex{X(1)}$. It follows from large deviation inequalities for sums of i.i.d. light-tailed random variables that for any constant $\epsilon>0$ 
	\begin{align}
	\label{eq:jjj2}
	\Pr{ V_n(1) \notin  n\Ex{L} \Ex{X(1)} \pm \epsilon n} &\le \Pr{|\mA|=n}^{-1} \Prb{ \sum_{k=1}^n \sum_{i=1}^{L^{[k]}} X_i^{[k]}(1) \notin  n\Ex{L} \Ex{X(1)} \pm \epsilon n} \\
	& = \exp(- \Theta(n)). \nonumber
	\end{align} 
	Using $\sum_{i=1}^n d_{\mA_n}^+(v_i) = n-1$ we may write
	\begin{align}
	V_n(2) = \left( \sum_{k=1}^n\left( Y^{[k]} + \sum_{i=1}^{L^{[k]}} X_i^{[k]}(2) \right) - (n-1)  \,\,\Bigg\vert\,\, \cE_n \right).
	\end{align}
	The event $\cE_n$ entails that
	\[
	\mathrm{Bin}\left( \sum_{k=1}^n\left( Y^{[k]} + \sum_{i=1}^{L^{[k]}} X_i^{[k]}(2) \right), 1-p \right) = n-1.
	\]
	Using Proposition~\ref{pro:binomial}, it follows that for any $\epsilon>0$
	\begin{align}
	\Prb{ \sum_{k=1}^n\left( Y^{[k]} + \sum_{i=1}^{L^{[k]}} X_i^{[k]}(2) \right) \notin \frac{n}{1-p} \pm \epsilon n  \,\,\Bigg\vert\,\, \cE_n } \le \exp(-\Theta(n)).
	\end{align}
	Hence
	\begin{align}
	\label{eq:jjj3}
	\Prb{ V_n(2) \notin \frac{p}{1-p}n \pm \epsilon n } \le \exp(-\Theta(n)).
	\end{align}
	Inequality~\eqref{eq:sss} now follows by combining~\eqref{eq:jjj1},\eqref{eq:jjj2}, and \eqref{eq:jjj3}.
\end{proof}

\begin{lemma}
	\label{le:attached}
	Let $D_n^*$ denote the largest degree of any vertex of $\mT_n^\Omega$ that belongs to one of the $V_n$ attached copies of $\mT^*$. Then it holds uniformly for all integers $\ell \ge n / \log^2 n$ that
	\begin{align}
	\label{eq:tossshow}
	\Pr{D_n^* = \ell} = o\left( \frac{1}{g_{\Omega}(n) n^{1/\theta}}\right).
	\end{align}
\end{lemma}
\begin{proof}
	The statement is trivial if $\Omega^c$ is finite. Hence it suffices to treat the case where $\Omega$ is finite. Using \cite[Prop. 3.2]{MR3687241}, it follows  that the maximal degree of $\Delta(\mT^*)$ satisfies
	\begin{align}
	\label{eq:dm}
	\Pr{\Delta(\mT^*) = n} & \le \frac{\Pr{\Delta(\mT) = n}}{\Pr{L_\Omega(\mT) = 0}} \\
	&= \frac{1 + o(1)}{(1- \Ex{\xi})\Pr{L_\Omega(\mT) = 0}} \Pr{\xi=n}. \nonumber
	\end{align}
	Letting $(\mT^*_i)_{i \ge 1}$ denote independent copies of $\mT^*$, it holds that
	\begin{align}
	\label{eq:dohdoh}
	D_n^* \eqdist \max_{1 \le i \le V_n} \Delta(\mT^*_i).
	\end{align}
	By Lemma~\ref{le:bootstraplem} there are constants $0<c<C$ such that
	\[
	\Pr{V_n \notin [cn, Cn]} \le \exp(-\Theta(n)).
	\]
	Moreover, using Inequality~\eqref{eq:dm} it follows that uniformly for all $k \in [cn, Cn]$ and $\ell \ge n / \log^2 n$
	\begin{align}
	\Prb{ \max_{1 \le i \le k} \Delta(\mT^*_i) = \ell} &\le k \Pr{\Delta(\mT^*) = \ell} \\
	&\le k \frac{1 + o(1)}{(1- \Ex{\xi})\Pr{L_\Omega(\mT) = 0}} \Pr{\xi=\ell} \nonumber. 
	\end{align}
	Note that $\alpha>1$ implies $\alpha > 1/\theta$. Hence, using the Potter bounds it follows that
	\begin{align}
	g_{\Omega}(n) n^{1/\theta} 	\Prb{ \max_{1 \le i \le k} \Delta(\mT^*_i) = \ell} = O(1)g_{\Omega}(n) n^{1/\theta} k f(\ell)\ell^{-1-\alpha} = o(1).
	\end{align}
	By~\eqref{eq:dohdoh} this verifies~\eqref{eq:tossshow}.
\end{proof}

We are now ready to complete the proof of Theorem~\ref{te:main1}:

\begin{proof}[Proof of Theorem~\ref{te:main1} for the case $0 \notin \Omega$]
	By Lemma~\ref{le:devD} and the Potter bounds it follows that 
	\begin{align}
	\Pr{\Delta(\mT_n^\Omega) < n / \log^2n} = o\left( \frac{1}{g_{\Omega}(n) n^{1/\theta}}\right).
	\end{align}
	Hence it suffices to prove
	\begin{align}
	\label{eq:lastline}
	\Pr{\Delta(\mT_n^\Omega) = \ell} = \frac{1}{g_\Omega(n)n^{1/\theta}}\left(h\left(\frac{ \Pr{\xi \in \Omega}^{-1}(1-\Ex{\xi})n - \ell}{g_\Omega(n)n^{1/\theta}}   \right) + o(1)\right)
	\end{align}
	uniformly for all integers $\ell \ge n / \log^2n$. 
	
	It is elementary that
	\begin{align}
	\Pr{\Delta(\mT_n^\Omega) = \ell} = \Pr{D_n = \ell} - \Pr{D_n = \ell, \Delta(\mT_n^\Omega) > \ell} + \Pr{\Delta(\mT_n^\Omega)=\ell, D_n < \ell}.
	\end{align}
	Using Lemma~\ref{eq:lltforD} and 
	it follows that in order to verify~\eqref{eq:lastline} it is sufficient to show
	\begin{align}
	\label{eq:conquest1}
	\Pr{D_n = \ell, \Delta(\mT_n^\Omega) > \ell} = o\left( \frac{1}{g_{\Omega}(n) n^{1/\theta}}\right),
	\end{align}
	and
	\begin{align}
	\label{eq:conquest2}
	\Pr{\Delta(\mT_n^\Omega)=\ell, D_n < \ell} = o\left( \frac{1}{g_{\Omega}(n) n^{1/\theta}}\right).
	\end{align}
	
	Similarly as for $D_n$, we let $D_n^\circ$ denote the largest outdegree of a \emph{spine} vertex created from the blow-up of a vertex of $\mA_n$ \emph{except} the lexicographically first vertex with outdegree $\Delta(\mA_n)$. Hence 
	\begin{align}
	\Delta(\mT_n^\Omega) = \max(D_n, D_n^*, D_n^\circ).
	\end{align}
	
	Our next intermediate step is to reduce to reduce~\eqref{eq:conquest1} and~\eqref{eq:conquest2} to properties of $D_n^\circ$ and $D_n$ only.
	Using $\ell \ge n / \log^2n$, it follows from Lemma~\ref{le:attached} that
	\begin{align}
	\Pr{\Delta(\mT_n^\Omega)=\ell, D_n < \ell} \le \Pr{D_n^{\circ}= \ell, D_n < \ell} + o\left( \frac{1}{g_{\Omega}(n) n^{1/\theta}}\right).
	\end{align}
	This reduces~\eqref{eq:conquest2} to showing
	\begin{align}
	\label{eq:wonder2}
	\Pr{D_n^{\circ}= \ell, D_n < \ell} = o\left( \frac{1}{g_{\Omega}(n) n^{1/\theta}}\right).
	\end{align}
	As for~\eqref{eq:conquest1}, it is elementary that 
	\begin{align}
	\Pr{D_n = \ell, \Delta(\mT_n^\Omega) > \ell} \le \Pr{D_n = \ell, D_n^\circ > \ell} + \Pr{D_n = \ell, D_n^* > \ell}
	\end{align}
	Using  Lemma~\ref{eq:lltforD} it follows that for any $\epsilon>0$ we may select a constant $M>0$ large enough so that 
	\begin{align}
	g_{\Omega}(n) n^{1/\theta} \Pr{D_n = \ell, D_n^* > \ell} \le \epsilon
	\end{align}
	uniformly for all $\ell \notin  \Pr{\xi \in \Omega}^{-1}(1-\Ex{\xi})n \pm M g_\Omega(n)n^{1/\theta}$.  For all $\ell \in\Pr{\xi \in \Omega}^{-1}(1-\Ex{\xi})n \pm M g_\Omega(n)n^{1/\theta}$ it follows from Lemma~\ref{le:bootstraplem} and Lemma~\ref{eq:lltforD} that there are constants $0<c<C$ with
	\begin{align}
	g_{\Omega}(n) n^{1/\theta} \Pr{D_n = \ell, D_n^* > \ell} &= O(1) \frac{\Pr{ D_n^* > \ell , D_n = \ell}}{\Pr{D_n = \ell}} \\
	&= O(1) \frac{\Pr{ D_n^* > \ell , D_n = \ell, cn < V_n < C_n} + o(1)}{\Pr{D_n = \ell, cn < V_n < Cn}(1 + o(1))} \nonumber\\
	&= O(1) \Pr{ D_n^* > \ell \mid D_n = \ell, cn  < V_n < C n} + o(1) \nonumber 
	\end{align}
	Now, conditionally on $V_n$, it holds that $D_n^*$ is distributed like the maximum of $V_n$ independent copies of $\Delta(\mT^*)$. Using~\eqref{eq:dm}  it follows that uniformly for $\ell \in\Pr{\xi \in \Omega}^{-1}(1-\Ex{\xi})n \pm M g_\Omega(n)n^{1/\theta}$
	\begin{align}
	g_{\Omega}(n) n^{1/\theta} \Pr{D_n = \ell, D_n^* > \ell} = o(1).
	\end{align}
	As $\epsilon>0$ was arbitrary, it follows that
	\begin{align}
	\Pr{D_n = \ell, D_n^* > \ell} = o\left( \frac{1}{g_{\Omega}(n) n^{1/\theta}}\right)
	\end{align}
	uniformly for all $\ell \ge n / \log^2 n$. Hence in order to verify~\eqref{eq:conquest1} it suffices to show that
	\begin{align}
	\label{eq:wonder1}
	\Pr{D_n = \ell, D_n^\circ > \ell} = o\left( \frac{1}{g_{\Omega}(n) n^{1/\theta}}\right)
	\end{align}
	Summing up, we have reduced the task of proving Theorem~\ref{te:main1} to verifying that~\eqref{eq:wonder2} and~\eqref{eq:wonder1} hold uniformly for all $\ell \ge n / \log^2 n $, that is,
	\begin{align}
	\label{eq:singlewonder}
	\max(\Pr{D_n^{\circ}= \ell, D_n < \ell},	\Pr{D_n = \ell, D_n^\circ > \ell}) = o\left( \frac{1}{g_{\Omega}(n) n^{1/\theta}}\right).
	\end{align}

	Let  $1 \le d \le n-1$ be an integer with $\Pr{\zeta=d}>0$. For ease of notation, set
	\begin{align}
	t_n = n /\log^2 n.
	\end{align}
	Consider the event $\cE_1$ that
	\begin{align}
	\max(Y, X_1(1) + X_1(2) +1, \ldots,  X_L(1) + X_L(2) +1) \ge t_n, 
	\end{align}
	and the event $\cE_2$ that
	\begin{align}
	\mathrm{Bin}\left(Y + \sum_{i=1}^L X_i(2), 1-p\right) = d.
	\end{align}
	The sum $\sum_{i=1}^L X_i(1)$ has finite exponential moments. Hence
	\begin{align}
	\Prb{\cE_1, Y + \sum_{i=1}^L X_i(2) < t_n / 2} \le \Prb{1 + \sum_{i=1}^L X_i(1) \ge t_n/2} \le \exp(- \Theta(t_n)).
	\end{align}
	Consequently,
	\begin{align}
	\Prb{\cE_1, \cE_2, d < (1-p) t_n / 4} \le \exp(- \Theta(t_n)).
	\end{align}
	As $1 \le d \le n-1$ and $\Pr{\zeta=d}>0$, the probability $\Pr{\cE_2} = \Pr{\zeta=d}$ is bounded from below by some fixed polynomial in $n^{-1}$ that does not depend on $d$. For ease of notation, we set \begin{align}s_n = \lfloor (1-p) t_n / 4 \rfloor.\end{align} It follows that
	\begin{align}
	\Pr{ \cE_1 \mid \cE_2} \le \exp(-\Theta(t_n))
	\end{align}
	uniformly for all $1 \le d \le s_n$ with $\Pr{\zeta=d}>0$. 
	
	It follows that there is an upper bound of the form $\exp(-\Theta(t_n))$ for  the probability of the event that $\mA_n$ contains a vertex $v$ with outdegree less than $s_n$ so that the largest degree in the spine of the blow-up of $b$ is larger than $t_n$. In particular, there exists  a bound of the form  $\exp(-\Theta(t_n))$ for 	
	the probability that $D_n^\circ$  got produced in the spine of the blow-up of some vertex  of $\mA_n$ with outdegree less than $s_n$. Using~\eqref{eq:deldev}, it follows that
	\begin{align}
	\max(\Pr{D_n^{\circ}= \ell, D_n < \ell},	\Pr{D_n = \ell, D_n^\circ > \ell}) \le R_{n,\ell} +  o\left( \frac{1}{g_{\Omega}(n) n^{1/\theta}}\right).
	\end{align}
	for the probability $R_{n,\ell}$ that at least two vertices of $\mA_n$ have outdegree at least $s_n$ and the blow-up of one of them produces a spine vertex with outdegree $\ell$.
	
	This reduces the verification of~\eqref{eq:singlewonder} and hence also the completion of the proof of Theorem~\ref{te:main1} to proving
	\begin{align}
	\label{eq:comeoooon}
	R_{n, \ell} = o\left( \frac{1}{g_{\Omega}(n) n^{1/\theta}}\right)
	\end{align}
	uniformly for all integers $\ell \ge t_n$. 
	
	Let $(\zeta_i)_{i \ge 1}$ be a family of independent copies of the offspring distribution $\zeta$ and set $\tilde{S}_k = \zeta_1 + \ldots + \zeta_k$ for all $k$. Using Lemma~\ref{le:cyc} and Equations~\eqref{eq:teq},~\eqref{eq:gwt}, and~\eqref{eq:zetadensity} it follows that
	\begin{align}
	\label{eq:rnl1}
	R_{n,\ell} &\le \frac{1}{\Pr{\tilde{S}_n = n-1}} n^2 \sum_{y =  s_n}^n \Pr{\zeta=y}\Pr{\tilde{S}_{n-1} = n-1 - y, \zeta_1 \ge s_n} p_{\ell,y} \\
	&\le \frac{O(n)}{\Pr{\xi=n}} \sum_{y = s_n}^n \Pr{\xi=y}  \Pr{\tilde{S}_{n-1} = n-1 - y, \zeta_1 \ge s_n} p_{\ell,y}\nonumber
	\end{align}
	with $p_{\ell,y}$ denoting the conditional probability
	\begin{multline*}
	\mathbb{P} \Bigg(\max(Y, 1 + X_1(1) + X_1(2), \ldots, 1 + X_L(1) + X_L(2))=\ell \,\,\Bigg\vert\\\mathrm{Bin}\left( Y + \sum_{i=1}^L X_i(2) ,1-p\right) = y\Bigg).
	\end{multline*}
	Applying the local limit theorem for sums of independent copies of $\zeta$, it follows that uniformly in $\ell$ and $y$
	\begin{align}
	\Pr{\tilde{S}_{n-1} = n-1 - y, \zeta_1 \ge s_n} &= \sum_{i \ge s_n} \Pr{\zeta = i} \Pr{\tilde{S}_{n-2} = n-1 -y - i} \\
	&= \frac{O(1)}{g_A(n) n^{1/\theta}} \Pr{\zeta \ge s_n}\nonumber
	\end{align}
	As $\Pr{\zeta \ge s_n} = O(\Pr{\xi \ge s_n})$, it follows from~\eqref{eq:rnl1} that
	\begin{align}
	\label{eq:rnl2}
	R_{n,\ell} \le \frac{O(n) \Pr{\xi \ge s_n}}{g_A(n)n^{1/\theta} \Pr{\xi=n}}\sum_{y =  s_n }^n \Pr{\xi=y}   p_{\ell,y}.
	\end{align}
	As $ y \ge s_n = \Theta(n / \log n)$ and $y \le n$, we may apply Inequality~\eqref{eq:devi} and the local limit theorem in~\eqref{eq:llt111}, yielding 
	\begin{align}
	p_{\ell,y} =& O(\exp(- \Theta(\log^2 n))) \\ &+ \sum_{r \in \lfloor y/(1-p) \rfloor  \pm \sqrt{n} \log n}	  \Prb{Y + \sum_{i=1}^L X_i(2) = r \,\,\Bigg\vert\,\, \mathrm{Bin}\left( Y + \sum_{i=1}^L X_i(2) ,1-p\right) = y} q_{\ell,r} \nonumber\\
	=& O(\exp(- \Theta(\log^2 n))) + O(1/\sqrt{n}) \sum_{r \in y/(1-p) \pm \sqrt{n} \log n} q_{\ell,r} \nonumber
	\end{align}
	with
	\begin{align}
	q_{\ell,r} &:= \Prb{\max(Y, 1 + X_1(1) + X_1(2), \ldots, 1 + X_L(1) + X_L(2))=\ell \,\,\Bigg\vert\,\,  Y + \sum_{i=1}^L X_i(2) = r}.
	\end{align}
	It follows from Inequality~\eqref{eq:bob} that there are constants $c_0, r_0>0$ with
	\begin{align}
	q_{\ell,r} \le O(1) \frac{\Pr{\xi= \ell}}{\Pr{\xi = y}}\left( \one_{r-\ell\le r_0} \exp(-c_0|r-\ell|) + \one_{r-\ell> r_0} \Pr{\xi=r - \ell}  \right)
	\end{align}
	Thus,
	\begin{align}
	R_{n,\ell} \le & \,O(\exp(-\Theta(\log^2 n))) + \frac{O(n) \Pr{\xi \ge s_n} \Pr{\xi=\ell}}{g_A(n)n^{1/\theta} \Pr{\xi=n} \sqrt{n}}B_{\ell}
	\end{align}
	for
	\begin{align}	
	B_{\ell} &:= 
	\sum_{y =  s_n }^n \sum_{r \in \lfloor y/(1-p) \rfloor  \pm \sqrt{n} \log n}
	\left( \one_{r-\ell\le r_0} \exp(-c_0|r-\ell|) + \one_{r-\ell> r_0} \Pr{\xi=r - \ell}  \right) \\
	&\le \sum_{|s| \le  \sqrt{n} \log n} \sum_{y =  s_n }^n 
	\left(  \exp(-c_0| \lfloor y/(1-p) \rfloor + s - \ell |) + \Pr{\xi=| \lfloor y/(1-p) \rfloor + s - \ell |}  \right) \nonumber \\
	&= O(\sqrt{n} \log n) \nonumber
	\end{align}
	uniformly for all $\ell \ge t_n$. Using $\alpha>1$, $\ell \ge t_n$ and the Potter bounds it follows that
	\begin{align}
	\frac{O(n) \Pr{\xi \ge s_n} \Pr{\xi=\ell}}{g_A(n)n^{1/\theta} \Pr{\xi=n} \sqrt{n}}B_\ell  &=  \frac{O(n) \Pr{\xi \ge s_n} \Pr{\xi=\ell} \log n}{g_A(n)n^{1/\theta} \Pr{\xi=n} }\\
	&=  o\left( \frac{1}{g_{\Omega}(n) n^{1/\theta}}\right). \nonumber
	\end{align}
	This verifies~\eqref{eq:comeoooon} and completes the proof.
\end{proof}


\appendix

\section{Renewal Theory}

\label{sec:appendix}

Let $(Y_i)_{i \ge 1}$ and $(\bar{Y}_i)_{i \ge 1}$ denote independent copies of a non-negative random integer $Y$ that lies in the domain of attraction of a $\theta$-stable law for $1 < \theta \le 2$. Let $Y_0$ denote an arbitrary non-negative integer and let $\bar{Y}_0$ denote an independent copy of $Y_0$. We define the renewal process
\begin{align}
\tau_n = \sup\left\{k \ge 0 \bigg \rvert \sum_{i=0}^k Y_i \le n\right\} \in \{-\infty\} \cup \ndN_0.
\end{align}
Here we use that the convention that the supremum of the empty set equals $-\infty$. Thus 
\begin{align}
\Pr{\tau_n = -\infty} = \Pr{Y_0 > n}.
\end{align}

The present section is dedicated to the question, how much the sequence $(\bar{Y}_i)_{0 \le i \le \tau_n}$  deviates from $(Y_i)_{0 \le i \le \tau_n}$. Clearly there are some differences, as  $\sum_{i=0}^{\tau_n} \bar{Y}_i$ fluctuates  around $n$ at a larger order of magnitude than the sum $\sum_{i=0}^{\tau_n} Y_i$. However,  subfamilies may asymptotically become independent from each other and from $\tau_n$.

\subsection{Local limit theorems for first passage times}

Local limit theorems for first passage times have received attention in the literature, see for example~\cite{zbMATH06023979,zbMATH06666232} and references given therein. We collect some preparatory results.

Let $(X_t)_{t \ge 0}$ denote the spectrally positive L\'evy process with Laplace exponent $\Ex{\exp(-\lambda X_t)} = \exp(t\lambda^\theta )$.  $X_1$ has a density $h$ that is positive,  uniformly continuous, and bounded on $\ndR$ (see ~\cite[Sec. XVII.6]{MR0270403} and \cite{MR1406564}).  Let us check that the local limit theorem for $\sum_{i=1}^n Y_i $ still holds if the sum includes the additional term $Y_0$ that follows a different distribution than the rest.
\begin{proposition}
	\label{pro:extllt}
	There is a slowly varying function $g_Y$ such that 
	\begin{align}
	\label{eq:lltsecond}
	\lim_{n \to \infty} \sup_{\ell \ge 0} \left|g_Y(n) n^{1/\theta} \Prb{\sum_{i=0}^n Y_i = \ell} - h\left( \frac{\ell - n\Ex{Y}}{g_Y(n) n^{1/\theta}} \right) \right| = 0.
	\end{align}
\end{proposition}
\begin{proof}
	The classical local limit theorem~\cite[Thm. 4.2.1]{MR0322926} implies that there is a slowly varying function $g_Y$ such that 
	\begin{align*}
	\lim_{n \to \infty} \sup_{\ell \ge 0} \left|g_Y(n) n^{1/\theta} \Prb{\sum_{i=1}^n Y_i = \ell} - h\left( \frac{\ell - n\Ex{Y}}{g_Y(n) n^{1/\theta}} \right) \right| = 0.
	\end{align*}
	It follows that, with an $o(1)$ term that is uniform in $\ell \in \ndZ$,
	\begin{align*}
	g_Y(n)n^{1/\theta}\Prb{\sum_{i=0}^n Y_i = \ell} &= \sum_{k = 0}^\ell \Pr{Y_0 = k}\left( h\left( \frac{\ell-k  - n\Ex{Y}}{g_Y(n) n^{1/\theta}} \right) + o(1)\right) \\
	&= o(1) + \sum_{k = 0}^\ell \Pr{Y_0 = k} h\left( \frac{\ell-k  - n\Ex{Y}}{g_Y(n) n^{1/\theta}} \right) .
	\end{align*}
	For any $\epsilon>0$ we may select a constant integer $K>0$ such that $\Pr{Y_0 > K} < \epsilon$. Using that $h$ admits an upper bound $H>0$ and is uniformly continuous, it follows that
	\begin{align*}
	\left|g_Y(n) n^{1/\theta} \Prb{\sum_{i=0}^n Y_i = \ell} - h\left( \frac{\ell - n\Ex{Y}}{g_Y(n) n^{1/\theta}} \right) \right| \le o(1) + 2\epsilon H. 
	\end{align*}
	As this holds for arbitrary (but fixed) $\epsilon>0$, Equation~\eqref{eq:lltsecond} follows.
\end{proof}

The stopping time $\tau_n$ satisfies a similar local limit theorem for the remaining possible values.
\begin{proposition}
	\label{pro:fllt}
	Let $(\ell_n)_{ n \ge 1}$ denote a sequence of positive integers that satisfies $\ell_n \to \infty$ as $n$ becomes large. Then
	\begin{align}
	\label{eq:firstrllt}
	\sup_{\ell \ge \ell_n} \left|\Ex{Y}^{-1} g_Y(\ell) \ell^{1/\theta} \Pr{\tau_n = \ell} - h\left( \frac{n - \ell\Ex{Y}}{g_Y(\ell) \ell^{1/\theta}} \right) \right| = 0.
	\end{align}
\end{proposition}
\begin{proof}
	Using Proposition~\ref{pro:extllt}, it follows that, for a uniform $o(1)$ term,
	\begin{align*}
	g_Y(\ell)\ell^{1/\theta} \Pr{\tau_n= \ell} &= 	\sum_{k =0}^n g_Y(\ell)\ell^{1/\theta}  \Prb{\sum_{i=0}^\ell Y_i = n-k}\Pr{Y >k} \\ 
	&= \sum_{k=0}^n \left( h\left( \frac{n-k  - \ell\Ex{Y}}{g_Y(\ell) \ell^{1/\theta}} \right) + o(1)\right)\Pr{Y >k} \\
	&= o(1) + \sum_{k=0}^n  h\left( \frac{n-k  - \ell\Ex{Y}}{g_Y(\ell) \ell^{1/\theta}} \right)\Pr{Y >k}.
	\end{align*}
	As $\Ex{Y}< \infty$, for any $\epsilon>0$ we may select a constant $K>0$ with $\sum_{k >K} \Pr{Y>k}<\epsilon$. Using that the density $h$ has an upper bound $H>0$ and is uniformly continuous, it follows that uniformly for $\ell \ge \ell_n$
	\begin{align*}
	\left|g_Y(\ell) \ell^{1/\theta} \Pr{\tau_n = \ell} -  \Ex{Y} h\left( \frac{n - \ell\Ex{Y}}{g_Y(\ell) \ell^{1/\theta}} \right) \right| \le o(1) + 2 \epsilon H.
	\end{align*}
	As this holds for any fixed $\epsilon>0$, Equation~\eqref{eq:firstrllt} follows.
\end{proof}

\begin{corollary}
	\label{co:fllt}
	Let $\delta>0$ be given.
	Then 
	\begin{align}
	\label{eq:tov}
	\lim_{n \to \infty} \sup_{\ell \ge \delta n} \left|g_Y(n) n^{1/\theta}\Ex{Y}^{-1-1/ \theta} \Pr{\tau_n = \ell} - h\left( \frac{n/\Ex{Y} - \ell}{g_Y(n) n^{1/\theta}\Ex{Y}^{-1-1/ \theta}} \right) \right| = 0
	\end{align}
	and
	\begin{align}
	\label{eq:tow}
	\frac{n/\Ex{Y} - \tau_n}{g_Y(n)n^{1/\theta} \Ex{Y}^{-1-1/\theta}} \convdis X_1.
	\end{align}
\end{corollary}
\begin{proof}
	The central limit theorem in~\eqref{eq:tow} is a direct consequence of the local limit theorem~\eqref{eq:tov}, so it suffices to verify the latter.
	
	Let $M>0$ be a constant, and define the interval $I_{M,n} := n/\Ex{Y} \pm  M g_Y(n)n^{1/\theta}$. Then $g_Y(\ell)\ell^{1/\theta} \sim g_Y(n) n^{1/\theta} \Ex{Y}^{-1/\theta}$ uniformly for all integers $\ell \in \ndZ  \cap I_{M,n}$ as $n$ tends to infinity. By Proposition~\ref{pro:fllt}, it follows that~\eqref{eq:tov} holds uniformly for $\ell \in I_{M,n}$. As this holds for any fixed $M>0$, it follows that there exists a sequence $(M_n)_{n \ge 1}$ with $M_n \to \infty$ such that~\eqref{eq:tov} holds uniformly for $\ell \in I_{M_n,n}$. 
	
	It holds uniformly for $\ell \in \ndZ \setminus I_{M_n,n}$ that
	\[
	\left| \frac{n/\Ex{Y} - \ell}{g_Y(n) n^{1/\theta}\Ex{Y}^{-1-1/ \theta}}  \right| \to \infty
	\]
	and hence
	\[
	h\left( \frac{n/\Ex{Y} - \ell}{g_Y(n) n^{1/\theta}\Ex{Y}^{-1-1/ \theta}} \right) = o(1).
	\]
	Moreover it follows from Proposition~\ref{pro:fllt} that uniformly for $\ell \in \ndZ \cap ( [\delta n, \infty[\, \setminus\, I_{M_n,n})$
	\begin{align*}
	g_Y(n) n^{1/\theta} \Pr{\tau_n= \ell} &= \frac{g_Y(n)n^{1/\theta}}{g_Y(\ell)\ell^{1/\theta}} \left(o(1) + h\left( \frac{g_Y(n)n^{1/\theta}}{o(1)g_Y(\ell)\ell^{1/\theta}} \right) \right) \\
	&= o(1).
	\end{align*}
	Here we have used that the density $h$ is bounded, and that $\ell \ge \delta n$ ensures that $\frac{g_Y(n)n^{1/\theta}}{g_Y(\ell)\ell^{1/\theta}} = O(1)$. (In detail: if the argument of the $h$-function in this expressions has an absolute value that becomes large, then the entire expression tends to zero since $\frac{g_Y(n)n^{1/\theta}}{g_Y(\ell)\ell^{1/\theta}} = O(1)$. The only way for this case not to happen is when $\frac{g_Y(n)n^{1/\theta}}{g_Y(\ell)\ell^{1/\theta}}$ tends to zero, but in this case the fact that $h$ is bounded ensures that the entire expression tends to zero.)
\end{proof}

\subsection{Decoupling}

Recall that the family $(\bar{Y}_i)_{i \ge 0}$ is defined to be independent from $(Y_i)_{i \ge 0}$ and hence $\tau_n$. The following proposition ensures that the majority of coordinates of $({Y}_i)_{0 \le i \le \tau_n}$ become asymptotically independent from each other.
\begin{proposition}
	\label{pro:firstbit}
	Let $(u_n)_{n \ge 1}$ denote a sequence of positive integers such that \begin{align}
	\frac{u_n - n/\Ex{Y}}{g_Y(n)n^{1/\theta}} \to - \infty.
	\end{align}
	Then
	\begin{align}
	\label{eq:ap1}
	(Y_0, \ldots, Y_{\min(\tau_n, u_n)}) \atv (\bar{Y}_0, \ldots, \bar{Y}_{u_n}).
	\end{align}
\end{proposition}
\begin{proof}
	The central limit theorem for $\tau_n$ in~\eqref{eq:tow} implies that the probability for $u_n < \tau_n$ tends to $1$ as $n$ tends to infinity. This readily verifies~\eqref{eq:ap1}.
\end{proof}

Care has to be taken that although most coordinates of $(Y_i)_{0 \le i \le \tau_n}$ become asymptotically independent from each other, the dependence on $\tau_n$ perseveres. For this reason, only a weaker contiguousness relation holds:

\begin{lemma}
	\label{le:doit}
	Let $\epsilon>0$ and  $0<\delta<1/\Ex{Y}$ be given, and set $t_n = \lfloor \delta n \rfloor$. There are constants $0<c<C$ such that for all collections $\cE$ of finite sequences of integers
	\begin{align}
	\label{eq:ap2}
	c \Pr{(\bar{Y}_0, \ldots, \bar{Y}_{\tau_n - t_n}) \in \cE} - \epsilon &\le \Pr{ ( Y_0, \ldots, Y_{\tau_n - t_n}) \in \cE} \\&\le  C \Pr{(\bar{Y}_0, \ldots, \bar{Y}_{\tau_n - t_n}) \in \cE} + \epsilon. \nonumber
	\end{align}
\end{lemma}
Note that the vector $( Y_0, \ldots, Y_{\tau_n - t_n})$ determines $\tau_n$, since $t_n$ is deterministic.

\begin{proof}[Proof of Lemma~\ref{le:doit}]
	For all $M_1, M_2>0$ we consider the collection $\cE_{n, \delta, M_1,M_2}$ of finite sequences $\bm{y} = (y_0, \ldots, y_k)$, $k \ge 1$,  of non-negative integers satisfying the following properties:
	\begin{enumerate}
		\item $\Pr{Y_0 = y_0}>0$  and $\Pr{Y=y_i} >0$ for all integers $1 \le i \le k$.
		\item $\sum_{i=0}^{k} y_i \in k\Ex{Y} \pm M_1 g_Y(k) k^{1/\theta}$.
		\item $k+t_n \in n/\Ex{Y} \pm M_2 g_Y(n) n^{1/\theta}$.
	\end{enumerate}
	For any such sequence set $\ell = \sum_{i=0}^k y_i$. Then
	\begin{align}
	\label{eq:t1}
	\frac{\Pr{ ( Y_0, \ldots, Y_{\tau_n - t_n}) = \bm{y}}} {\Pr{(\bar{Y}_0, \ldots, \bar{Y}_{\tau_n - t_n}) = \bm{y}}} = \frac{\Pr{\tau_{n-\ell} = t_n }}{\Pr{\tau_n= k + t_n}}.
	\end{align}
	Our assumptions ensure that $t_n / g_Y(n)n^{1/\theta} \to \infty$ and hence $n - \ell \to \infty$. This allows us apply Proposition~\ref{pro:fllt} and Corollary~\ref{co:fllt}, yielding
	\begin{align}
	\frac{\Pr{\tau_{n-\ell} = t_n }}{\Pr{\tau_n= k + t_n}} &= \frac{g_Y(n) n^{1/\theta}}{\Ex{Y}^{1/\theta}g_Y(t_n)t_n^{1/\theta}} \frac{ h\left(\frac{n - \ell - t_n\Ex{Y}}{g_Y(t_n)t_n^{1/\theta}} \right) + o(1)}{h\left(\frac{n - (k+t_n)\Ex{Y}}{g_Y(n)n^{1/\theta} \Ex{Y}^{-1/\theta}} \right) + o(1)}.
	\end{align}
	If we write $k + t_n = n/\Ex{Y} + y g_Y(n)n^{1/\theta}$ with $|y| \le M_2$ and $\ell= k \Ex{Y} + x g_Y(k)k^{1/\theta}$ with $|x| \le M_1$, then
	\[
	\frac{n - (k+t_n)\Ex{Y}}{g_Y(n)n^{1/\theta} \Ex{Y}^{-1/\theta}} = -y \Ex{Y}^{1 + 1/\theta}
	\]
	and
	\[
	\frac{n - \ell - t_n\Ex{Y}}{g_Y(t_n)t_n^{1/\theta}} = \frac{-y g_Y(n) n^{1/\theta} \Ex{Y} -x g_Y(k)k^{1/\theta}}{g_Y(t_n)t_n^{1/\theta}}.
	\]
	Using $t_n = \lfloor \delta n \rfloor$, it follows that uniformly for all $\bm{y} \in \cE_{n, \delta, M_1,M_2}$
	\begin{align}
	\frac{1 + o(1)}{\Ex{Y}^{1/\theta}\delta^{1/\theta}} \frac{\inf_{|z| \le \delta^{-1/\theta}(M_2\Ex{Y} + M_1|\Ex{Y}^{-1} - \delta|^{1/\theta}) }h(z) }{\sup_{z \in \ndR} h(z)} &\le \frac{\Pr{ ( Y_0, \ldots, Y_{\tau_n - t_n}) = \bm{y}}} {\Pr{(\bar{Y}_0, \ldots, \bar{Y}_{\tau_n - t_n}) = \bm{y}}} \\ &\le \frac{1 + o(1)}{\Ex{Y}^{1/\theta} \delta^{1/\theta}} \frac{ \sup_{z\in \ndR} h(z)}{ \inf_{|z| \le M_2 \Ex{Y}^{1 + 1/\theta} } h(z)}. \nonumber
	\end{align}
	
	Corollary~\ref{co:fllt} and Proposition~\ref{pro:firstbit} ensure that for any $\epsilon>0$ we may select $M_1$ and $M_2$ sufficiently large so that $(Y_0, \ldots, Y_{\tau_n - t_n})$ and $(\bar{Y}_0, \ldots, \bar{Y}_{\tau_n - t_n})$ lie in $\cE_{n, \delta, M_1,M_2}$ with probability at least $1 - \epsilon$ for all sufficiently large $n$. This concludes the proof.
\end{proof}

A small portion of the coordinates becomes fully independent of each other \emph{and} of $\tau_n$:

\begin{lemma}
	\label{le:doit2}
	Let $(k_n)_{n \ge 1}$ denote a sequence of positive integers satsifying $k_n = o(n)$. Then
	\begin{align}
	(\tau_n, Y_0, \ldots, Y_{\min(\tau_n,k_n)}) \atv (\tau_n, \bar{Y}_0, \ldots, \bar{Y}_{k_n}).
	\end{align}
\end{lemma}
\begin{proof}
	For any numbers $M_1, M_2>0$ and any integer $k>0$ we may consider the collection $\cE_{M_1, M_2, k}$ of vectors $(t, y_0, \ldots, y_k)$ of positive integers satisfying the following properties:
	\begin{enumerate}
		\item $\Pr{Y_0 = y_0}>0$  and $\Pr{Y=y_i} >0$ for all integers $1 \le i \le k$.
		\item $\sum_{i=0}^{k} y_i \in k\Ex{Y} \pm M_1 g_Y(k) k^{1/\theta}$.
		\item $t \in n/\Ex{Y} \pm M_2 g_Y(n) n^{1/\theta}$.
	\end{enumerate}
	Setting $\ell= \sum_{i=0}^k y_i$, it holds that
	\begin{align}
	\label{eq:now}
	\frac{\Pr{ (\tau_n, Y_0, \ldots, Y_{k}) = \bm{y}}} {\Pr{(\tau_n, \bar{Y}_0, \ldots, \bar{Y}_{k}) = \bm{y}}} = \frac{\Pr{\tau_{n-\ell} = t -k }}{\Pr{\tau_n= t}}.
	\end{align}
	For $k = k_n$ our assumptions entail $n - \ell \to \infty$ and $t-k_n \to \infty$. This allows us to apply Proposition~\ref{pro:fllt} and Corollary~\ref{co:fllt}, yielding
	\begin{align}
	\frac{\Pr{\tau_{n-\ell} = t -k_n }}{\Pr{\tau_n= t}} = \frac{g_Y(n) n^{1/\theta}}{\Ex{Y}^{1/\theta}g_Y(t - k_n)(t-k_n)^{1/\theta}} \frac{ h\left(\frac{n - \ell - (t-k_n)\Ex{Y}}{g_Y(t - k_n)(t - k_n)^{1/\theta}} \right) + o(1)}{h\left(\frac{n - t\Ex{Y}}{g_Y(n)n^{1/\theta} \Ex{Y}^{-1/\theta}} \right) + o(1)}.
	\end{align}
	Since $k_n = o(n)$, it follows that $t - k_n \sim n/\Ex{Y}$ and $\ell = o(n)$, yielding
	\begin{align}
	\frac{\Pr{ (\tau_n, Y_0, \ldots, Y_{k}) = \bm{y}}} {\Pr{(\tau_n, \bar{Y}_0, \ldots, \bar{Y}_{k}) = \bm{y}}} = 1 + o(1)
	\end{align}
	uniformly for all $(t,y_0, \ldots, y_k) \in \cE_{M_1, M_2, k_n}$. 
	
	For any $\epsilon>0$ we may select $M_1,M_2>0$ large enough so that the vector \[(\tau_n, \bar{Y}_0, \ldots, \bar{Y}_{k_n})\] lies in the set $\cE_{M_1, M_2, k_n}$ with probability at least $1 - \epsilon$. This completes the proof.
\end{proof}


\ACKNO{I thank the referee and the associate editor for the thorough reading and  helpful suggestions, such as  the application of the renewal theorem in the proof of Lemma~\ref{le:limit}. I  also thank them for the encouragement to remove the condition $0 \in \Omega$.}


\end{document}